\numberwithin{equation}{subsection}
\let\oldsection\section
\renewcommand{\section}{
  \renewcommand{\theequation}{\thesection.\arabic{equation}}
  \oldsection}
\let\oldsubsection\subsection
\renewcommand{\subsection}{
  \renewcommand{\theequation}{\thesubsection.\arabic{equation}}
  \oldsubsection}
\newsavebox{\pullback}
\sbox\pullback{
\begin{tikzpicture}
\draw (0,0) -- (1ex,0ex);
\draw (1ex,0ex) -- (1ex,1ex);
\end{tikzpicture}}
\DeclareMathOperator{\Hom}{Hom}
\newtheorem{lemma}[equation]{Lemma}
\newtheorem{corollary}[equation]{Corollary}
\newtheorem*{corollary*}{Corollary}
\newtheorem{proposition}[equation]{Proposition}
\newtheorem{theorem}[equation]{Theorem}
\newtheorem*{theorem*}{Theorem}
\newtheorem*{lemma*}{Lemma}
\newtheorem{maintheorem}{Theorem}[section]
\theoremstyle{definition}
\newtheorem{example}[equation]{Example}
\newtheorem*{example*}{Example}
\newtheorem{definition}[equation]{Definition}
\newtheorem{observation}[equation]{Observation}
\newtheorem{notation}[equation]{Notation}
\theoremstyle{remark}
\newtheorem{remark}[equation]{Remark}
\newtheorem{warning}[equation]{Warning}
\crefname{thm}{Theorem}{Theorems}
\crefname{lem}{Lemma}{Lemmas}
\crefname{cor}{Corollary}{Corollaries}
\crefname{prop}{Proposition}{Propositions}
\crefname{ex}{Exercise}{Exercises}
\crefname{exm}{Example}{Examples}
\crefname{defn}{Definition}{Definitions}
\crefname{claim}{Claim}{Claims}
\crefname{rem}{Remark}{Remarks}
\crefname{fct}{Fact}{Facts}
\crefname{note}{Note}{Notes}
\DeclarePairedDelimiter\paren{(}{)}
\def\l@subsection{\@tocline{2}{0pt}{1pc}{5pc}{}} \def\l@subsection{\@tocline{2}{0pt}{2pc}{6pc}{}} \makeatother
	\let\oldparen\paren
	\def\paren{\@ifstar{\oldparen}{\oldparen*}}
\newcommand{\id}{\mathrm{id}}
\newcommand{\cO}{\mathcal{O}}
\newcommand{\mbbN}{\mathbb{N}}
\newcommand{\cD}{\mcal{D}}
\newcommand{\Fun}{\mathrm{Fun}}
\newcommand{\Deltaop}{\Delta^{op}}
\newcommand{\Cat}{\mathrm{Cat}}
\newcommand{\Op}{\mathrm{Op}}
\newcommand{\un}{\mathrm{un}}
\newcommand{\Fin}{\mathrm{Fin}}
\newcommand{\Triv}{\mathrm{Triv}}
\newcommand{\Mul}{\mathrm{Mul}}
\newcommand{\Opun}{\Op^{\un}}
\newcommand{\Opunleq}[1]{\Opun_{\leq {#1}}}
\newcommand{\cat}{\mathrm{Cat}_{\infty}}
\newcommand{\Finstar}{\Fin_*}
\newcommand{\Opunleqk}{\Opunleq{k}}
\newcommand{\Spaces}{\mathcal{S}}
\newcommand{\Lk}{\mathrm{L}_k}
\newcommand{\Rk}{\mathrm{R}_k}
\tikzset{
    symbol/.style={
        draw=none,
        every to/.append style={
            edge node={node [sloped, allow upside down, auto=false]{$#1$}}}
    }
}
\def\AA{\mathbb A}
\def\EE{\mathbb E}
\def\cC{\mathcal C}\def\cD{\mathcal D}
\def\cF{\mathcal F}
\def\cO{\mathcal O}\def\cP{\mathcal P}
\def\cQ{\mathcal Q}\def\cR{\mathcal R}
\newcommand{\LKan}{\mathrm{LKan}}
\newcommand{\pt}{\mathrm{pt}}
\newcommand{\colim@}[2]{
  \vtop{\m@th\ialign{##\cr
    \hfil$#1\operator@font colim$\hfil\cr
    \noalign{\nointerlineskip\kern-\ex@}\cr}}
}
\newcommand{\colim}{
  \mathop{\mathpalette\colim@{\scriptscriptstyle}}\nmlimits@
}
\newcommand{\Alg}{\mathrm{Alg}}
\newcommand{\CAlg}{\mathrm{CAlg}}
\newcommand{\fgt}{\mathrm{fgt}}
\newcommand{\onehalf}{\frac{1}{2}}
\title{$\mathbb{E}_n$-algebras in $m$-categories}
\author{Yu Leon Liu}
\address{Department of Mathematics, Harvard University, 1 Oxford St, Cambridge, MA 02139}
\email{yuleonliu@math.harvard.edu}
\begin{document}
\begin{abstract}
    We prove a connectivity bound for maps of $\infty$-operads of the form $\AA_{k_1} \otimes \cdots \otimes \AA_{k_n} \to \EE_n$, and as a consequence, give an inductive way to construct $\EE_n$-algebras in $m$-categories. The result follows from a version of Eckmann-Hilton argument that takes into account both connectivity and arity of $\infty$-operads. 
    Along the way, we prove a technical Blakers-Massey type statement for algebras of coherent $\infty$-operads.
\end{abstract}

\maketitle
\tableofcontents

\section{Introduction}
In the pioneering work \cite{stasheff1963homotopy}, Stasheff constructs 
a sequence of convex polytopes $K_n$, called the \emph{Stasheff associahedra}, that encode the associativity of binary multiplications.
This construction can be understood within the framework of $\infty$-operads: let $\EE_1$ be the associative operad . 
There exists a converging filtration 
\begin{equation}\label{eq:Ak-filtration}
   \EE_0 =  \AA_1 \to \AA_2 \to \cdots \to \AA_k \to \cdots \to \AA_\infty = \EE_{1}
\end{equation}
where the $\infty$-operad $\AA_k$ parametrizes unital binary multiplications that are homotopy coherently associative up to $k$ objects.\footnote{See for example \cite[Example 4.15]{dubey2024unital}.} 
Furthermore, the obstruction theory associated to the $\AA_k$ filtration \eqref{eq:Ak-filtration} is well-understood:
extending an $\AA_{k-1}$-algebra structure to an $\AA_{k}$-algebra structure is equivalent to lifting certain maps from $\partial K_k$ to $K_k$.\footnote{The non-unital version of this statement is proven in \cite[Theorem 4.1.6.13]{HA}. The unital version that we stated follows from the main theorem of \cite{goppl}.}

For $k \geq 1$, the map $\AA_k \to \EE_1$ satisfies two properties:
\begin{enumerate}
    \item\label{enum-item:AAk-1} 
    The map $\AA_k \to \EE_1$ is multi-homwise $(k-3)$-connected (\cref{def:op-surj-on-obj-homwise}); that is, 
    for any $n$, the induced map on $n$-ary morphism spaces $\AA_k(n) \to \EE_1(n)$ is $(k-3)$-connected. 
    \item\label{enum-item:AAk-2} The $\infty$-operad $\AA_k$ is the ``$k$-arity restriction'' of $\EE_1$; that is, for $n \leq k$, the map $\AA_k(n) \to \EE_1(n)$ is an equivalence, and for $n > k$ the $n$-ary morphism space $\AA_k(n)$ is generated by $(\leq k)$-ary morphisms of $\AA_k$ (as well as  $\EE_1$) in a suitable sense.
\end{enumerate}

This gives an explicit way to construct $\EE_1$-algebras in $m$-categories. Recall that an $m$-category is an $\infty$-category whose hom spaces are $(m-1)$-truncated.\footnote{These $m$-categories are $(m,1)$-categories, that is, the $n$-morphisms are invertible for $n \geq 2$. It sufficient to consider $(m,1)$-categories as $\cO$-algebras in an $(m,n)$-category are equivalent to $\cO$-algebras in its underlying $(m,1)$-category.}
The connectivity property \eqref{enum-item:AAk-1} implies the following: for any $m \geq -1$ and any symmetric monoidal $m$-category $\cC$, $\EE_1$-algebras are $\cC$
    is equivalent to $\AA_{m+2}$-algebras in $\cC$. Explicitly, the restriction map 
    \begin{equation}
        \Alg_{\EE_1}(\cC) \to \Alg_{\AA_{m+2}}(\cC)
    \end{equation}
is an equivalence of $\infty$-categories.

In this paper we generalize this pattern to $\EE_n$-algebras. 
By the additivity theorem, proved by Dunn in \cite{Dunn} in the classical setting and by Lurie in the framework of $\infty$-categories \cite[Theorem 5.1.2.2]{HA}, we have an equivalence of $\infty$-operads
\begin{equation}
    \EE_n \simeq \EE_1^{\otimes n}.
\end{equation}
It follows that there is a filtration of $\EE_n$ by $\AA_{k_1} \otimes \cdots \otimes \AA_{k_n}$. 
We are interested in a variant of connectivity, called $d$-equivalence, for the map  $\AA_{k_1} \otimes \cdots \otimes \AA_{k_n} \to \EE_n$. For $d \geq -2$, we say that a map of $\infty$-operads $\cO_1 \to \cO_2$ is a \emph{$d$-equivalence} if it induces an equivalence on $d$-truncations (\cref{def:op-d-equivalence}). Equivalently, for any symmetric monoidal $(d+1)$-category $\cC$, the induced functor $\Alg_{\cO_2}(\cC) \to \Alg_{\cO_1}(\cC)$ is an equivalence of $\infty$-categories. Our main result is the following:
\begin{maintheorem}[{\cref{thm:AAk-equivalence}}]\label{thm:intro-main-thm}
    Fix $1 \leq k_1, \cdots, k_n \leq \infty$. Let $k_{\min}$ be $\min(k_1, k_2, \cdots, k_n)$ and $r$ be the number of occurences of $k_{\min}$ among $k_1, \cdots, k_n$. The map of $\infty$-operads 
    \begin{equation}
        \AA_{k_1} \otimes \cdots \otimes \AA_{k_n} \to \EE_n
    \end{equation}
    is a $(nk_{\min} - 2 - r)$-equivalence.
\end{maintheorem}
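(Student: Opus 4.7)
The proof proceeds by induction on $n$. The base case $n = 1$ is property \eqref{enum-item:AAk-1}: the map $\AA_{k_1} \to \EE_1$ is multi-homwise $(k_1 - 3)$-connected and therefore a $(k_1 - 3)$-equivalence of $\infty$-operads, which matches the formula $nk_{\min} - 2 - r = k_1 - 3$ at $n = 1$ (so $r = 1$).

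For the inductive step, after reordering assume $k_1 = k_{\min}$, and apply Dunn additivity to factor
\begin{equation}
    \AA_{k_1} \otimes (\AA_{k_2} \otimes \cdots \otimes \AA_{k_n}) \to \AA_{k_1} \otimes \EE_{n-1} \to \EE_1 \otimes \EE_{n-1} \simeq \EE_n.
\end{equation}
The inner map is controlled by the inductive hypothesis and the outer map is $(k_1 - 3)$-connected, but a naive composition of the two equivalences yields only the minimum of the two connectivities, which is far from the claimed bound. The heart of the argument is therefore to show that tensoring preserves and quantitatively \emph{improves} these equivalences.

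The plan is to establish a Blakers--Massey type theorem for coherent $\infty$-operads: for maps $f_i : \cO_i \to \cP_i$ with prescribed multi-homwise connectivity and arity-restriction behavior, the tensor product $f_1 \otimes f_2$ inherits connectivity of order $d_1 + d_2 + c$ for an explicit arity-dependent correction $c$. Property \eqref{enum-item:AAk-2} --- that $\AA_k$ agrees with $\EE_1$ in arities $\leq k$ and is generated by those --- is essential here: it lets one filter the tensor product by arity and reduce the claim to individual Blakers--Massey estimates tree-by-tree. The parameter $r$ enters at this step via an operadic Eckmann--Hilton argument: each factor $\AA_{k_i}$ with $k_i > k_{\min}$ contributes an extra unit of connectivity (two compatible multiplications are more commutative than either one alone), whereas the $r$ factors achieving the minimum $k_{\min}$ contribute nothing extra, producing the characteristic $-r$ correction to $nk_{\min} - 2$.

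The main obstacle is the Blakers--Massey statement itself. Classical Blakers--Massey is a theorem about pushouts of spaces, and transporting it to $\infty$-operads requires a coherent model (Lurie's or dendroidal) in which the tensor product can be expressed as a colimit indexed by trees. The technical difficulty --- flagged in the abstract as the core technical input --- is to carry out this reduction while simultaneously tracking both the connectivity of multi-hom spaces and the arity structure that propagates the correction $r$ into the final bound. Once that statement is in place, the inductive combination above yields the theorem by careful but routine bookkeeping.
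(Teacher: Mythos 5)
Your outline gets the frame right (induction on $n$, the correct base case, the recognition that naive composition of equivalences fails and that the content lies in a connectivity-improving tensoring theorem), but the inductive step as proposed has a genuine gap. Your factorization through $\AA_{k_1} \otimes \EE_{n-1}$ leaves the inner map $\AA_{k_1} \otimes (\AA_{k_2} \otimes \cdots \otimes \AA_{k_n}) \to \AA_{k_1} \otimes \EE_{n-1}$ uncontrolled: to bound it you must tensor the inductive equivalence with $\AA_{k_1}$, but tensoring with a fixed operad is not known to preserve $d$-equivalences, and the only available tool of the required shape (the paper's generalized Eckmann--Hilton argument, \cref{thm:main-thm}) applies to a map $f \colon \cP \to \cQ$ that is \emph{multi-homwise $d_1$-connected} and whose \emph{$k$-restriction $f^k$ is an equivalence}. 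The inductive map $\AA_{k_2} \otimes \cdots \otimes \AA_{k_n} \to \EE_{n-1}$ satisfies neither hypothesis: multi-hom spaces of tensor products of $\infty$-operads are not computable, and there is no reason for its $k$-restriction to be an equivalence for any useful $k$, since arity restriction does not interact simply with $\otimes$. This is exactly why the paper's induction (\cref{thm:AAk-equivalence}(1)) never moves more than one factor at a time: it replaces a single $\AA_{k_i} \to \AA_{j_i}$, whose $k_i$-restriction \emph{is} an equivalence, and absorbs the remaining $n-1$ factors into the slot $\cR$ of \cref{thm:main-thm}, which only needs the weak invariant of $k$-wise $d$-connectedness. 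The $-r$ correction then falls out of the case analysis of $k_1 + \min\left((n-1)k'_{\min} - 2 - r',\ (n-1)k_1 - 2\right) - 1$; your per-factor ``extra unit of commutativity'' heuristic is consistent with the final formula but is attached to no provable step in your scheme.

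The second gap concerns coherence and the role of Blakers--Massey. You state your key lemma for \emph{coherent} operads and imagine proving it by expressing the operadic tensor product as a tree-indexed colimit. But the operads $\AA_k$ for $1 < k < \infty$ --- and hence the tensor factors occurring throughout the induction --- are not coherent (\cref{rem:Ak-not-coherent}), so a coherent-only theorem covers at most the outer map $\AA_{k_1} \otimes \EE_{n-1} \to \EE_n$ and cannot close the induction. The paper's resolution is the notion of $k$-wise $d$-connectedness (\cref{def:k-wise-d-connected}): coherent multi-homwise $d$-connected operads are $k$-wise $(k(d+2)-2)$-connected by the Blakers--Massey result for cubes of $\cO$-algebras in topoi (\cref{cor:op-BM-k}), and, crucially, $k$-wise connectivity transfers \emph{backwards} along $d$-equivalences (\cref{prop:k-wise-d-conn-and-d-eq}). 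This forces a joint induction proving simultaneously the $d$-equivalence statement and the $k$-wise connectivity of the tensor products --- parts (1)--(3) of \cref{thm:AAk-equivalence} --- rather than the single-statement induction you propose. Note also that the Blakers--Massey input never touches the operadic tensor product directly: \cref{thm:main-thm} is proven by lifting against $\Alg_{\cR}(\cC) \to \Rk(\Alg_{\cR}(\cC))$ using the (surjective-on-colors and multi-homwise $d$-connected, multi-homwise $d$-truncated) factorization system on $\Op$ (\cref{obs:op-fact-sys}), with Blakers--Massey entering only to show via \cref{prop:AlgO-RkAlgO-truncated} that the right-hand map is multi-homwise truncated. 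Without the one-factor-at-a-time replacement and the transfer of $k$-wise connectivity around the non-coherence of $\AA_k$, your induction does not close.
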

See \cref{thm:AAk-equivalence} for the full statement. 
The following is an immediate corollary:
\begin{corollary}\label{cor:intro-main-cor}
    Fix $1 \leq n \leq \infty$ and $m \geq -1$. Let $\cC$ be a symmetric monoidal $m$-category. The restriction functor  
    \begin{equation}
        \Alg_{\EE_n}(\cC) \to \Alg_{\AA_{k+1}^{\otimes(n-s)} \otimes \AA_{k}^{\otimes s}}(\cC)
    \end{equation}
    is an equivalence of $\infty$-categories, where $k = \lceil \frac{m+1}{n} \rceil + 1$ and $s$ is $(nk - m-1)$.
\end{corollary}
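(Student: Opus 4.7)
The plan is a direct application of \cref{thm:intro-main-thm} combined with the characterization of $d$-equivalences recalled just before the theorem statement: a map of $\infty$-operads $\cO_1 \to \cO_2$ is a $d$-equivalence if and only if the restriction $\Alg_{\cO_2}(\cC) \to \Alg_{\cO_1}(\cC)$ is an equivalence of $\infty$-categories for every symmetric monoidal $(d+1)$-category $\cC$. In particular, to prove the corollary it suffices to exhibit the map $\AA_{k+1}^{\otimes(n-s)} \otimes \AA_k^{\otimes s} \to \EE_n$ as an $(m-1)$-equivalence, and to check that the exponents $n-s$ and $s$ are nonnegative integers for the stated choice of $k$ and $s$.

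I would apply \cref{thm:intro-main-thm} to the tuple $(k_1, \ldots, k_n) = (\underbrace{k+1, \ldots, k+1}_{n-s}, \underbrace{k, \ldots, k}_{s})$. For this tuple, assuming $1 \leq s \leq n$, one reads off $k_{\min} = k$ and $r = s$, so \cref{thm:intro-main-thm} asserts that the canonical map in question is an $(nk - 2 - s)$-equivalence. The arithmetic in the corollary statement is engineered precisely so that substituting $s = nk - m - 1$ makes this degree equal to $m - 1$. The boundary case $s = 0$, i.e.\ all $k_i = k+1$, is handled identically by the same theorem with $k_{\min} = k+1$ and $r = n$, giving degree $n(k+1) - 2 - n = nk - 2 \geq m - 1$.

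Invoking the $d$-equivalence characterization with $d = m - 1$ then yields the claimed equivalence of algebra $\infty$-categories over any symmetric monoidal $m$-category $\cC$. The only step requiring care is the arithmetic verification that the prescribed value of $s$ really lies in $\{0, 1, \ldots, n\}$ so that the tensor product $\AA_{k+1}^{\otimes(n-s)} \otimes \AA_k^{\otimes s}$ is well-defined; this is a routine computation from the formula for $k$.

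The genuine work of the paper is \cref{thm:intro-main-thm} itself, which rests on the Eckmann--Hilton-style connectivity estimate and the Blakers--Massey statement advertised in the abstract. Once that theorem is in hand, the corollary is a purely formal specialization plus an arithmetic bookkeeping; there is no real obstacle at this stage.
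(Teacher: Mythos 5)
Your proposal is correct and is essentially the paper's own proof: the paper obtains the corollary by exactly your route, namely applying \cref{thm:AAk-equivalence}(2) to the tuple with $n-s$ copies of $k+1$ and $s$ copies of $k$ (so $k_{\min}=k$, $r=s$, giving an $(nk-2-s)=(m-1)$-equivalence) and then invoking \cref{obs:d-equivalence-d-cat} with $d=m-1$. One caveat on the step you defer as routine: the verification that $s\in\{0,\dots,n\}$ actually \emph{fails} for the formula as printed, since with $k=\lceil\frac{m+1}{n}\rceil+1$ and $m+1=qn+t$, $0<t<n$, one gets $s=2n-t>n$ (e.g.\ $n=2$, $m=2$ gives $s=3$); the intended value, consistent with the paper's table, is $k=\lfloor\frac{m+1}{n}\rfloor+1=\lceil\frac{m+2}{n}\rceil$, which yields $s\in\{1,\dots,n\}$ --- a typo in the statement rather than a flaw in your argument.
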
   
Below is a table of $\AA_{k+1}^{\otimes(n-s)} \otimes \AA_{k}^{\otimes s}$ for small $n$ and $m$.
\begin{center}
    \begin{tabular}{ c| c| c| c|c}
     $m$-category &  $\EE_1$ & $\EE_2$ & $\EE_3$ &  $\EE_4$ \\ 
     \hline
     -1 & $\AA_1$ & $\AA_1 \otimes \AA_1$ & $\AA_1 \otimes \AA_1 \otimes \AA_1$ & $\AA_1 \otimes \AA_1 \otimes \AA_1 \otimes \AA_1$\\
     0 & $\AA_2$ & $\AA_2 \otimes \AA_1$ & $\AA_2 \otimes \AA_1 \otimes \AA_1$ & $\AA_2 \otimes \AA_1 \otimes \AA_1 \otimes \AA_1$\\
     1 & $\AA_3$ & $\AA_2 \otimes \AA_2$ & $\AA_2 \otimes \AA_2 \otimes \AA_1$ & $\AA_2 \otimes \AA_2 \otimes \AA_1 \otimes \AA_1$\\
     2 & $\AA_4$ & $\AA_3 \otimes \AA_2$ & $\AA_2 \otimes \AA_2 \otimes \AA_2$ & $\AA_2 \otimes \AA_2 \otimes \AA_2 \otimes \AA_1$\\  
     3 & $\AA_5$ & $\AA_3 \otimes \AA_3$ & $\AA_3 \otimes \AA_2 \otimes \AA_2$ & $\AA_2 \otimes \AA_2 \otimes \AA_2 \otimes \AA_2$\\
     4 & $\AA_6$ & $\AA_4 \otimes \AA_3$ & $\AA_3 \otimes \AA_3 \otimes \AA_2$ & $\AA_3 \otimes \AA_2 \otimes \AA_2 \otimes \AA_2$\\
    \end{tabular}
    \end{center} 
    Let us remind the reader that $\AA_1 = \EE_0$ is an idempotent in the $\infty$-category $\Opun$ of unital $\infty$-operads; that is, $\cO \otimes \AA_1 \simeq \cO$ for any unital $\infty$-operad $\cO$.\footnote{Recall that an $\infty$-operad $\cO$ is unital if for every color $X \in \cO$, the $0$-ary morphism space $\Mul_{\cO}(\varnothing, X)$ is contractible.}
\begin{remark}
    Heuristically, \cref{cor:intro-main-cor} states that the most optimal way to obtain an $\EE_n$-algebra in an $m$-category is by ``going up the diagonal'' in the $n$-dimensional lattice, where the point $(k_1, \cdots, k_n)$ corresponds to $\AA_{k_1} \otimes \cdots \otimes \AA_{k_n}$. Furthermore, going from $m$-categories to $(m+1)$-categories, we simply need to increase one of the minimum elements of $(k_1, \cdots , k_n)$ by one. 
\end{remark}
We refer the reader to  \cref{sec:En-in-m-cat} for  how \cref{cor:intro-main-cor} relates to the Eckmann-Hilton argument \cite{EH}
and classical notions of monoidal and braided monoidal structures.
 
\textbf{Generalized Eckmann-Hilton argument.}
\cref{thm:intro-main-thm} is the consequence of a generalized Eckmann-Hilton argument (EHA), which we now explain. 
First let us recall the $\infty$-categorical EHA given in \cite{EHA}. 
Recall that a unital $\infty$-operad is \emph{reduced} if the underlying $\infty$-category is contractible.
Let $\cP \to \cQ$ be a $d_1$-equivalence of reduced $\infty$-operads and $\cR$ be a multi-homwise $d_2$-connected reduced $\infty$-operad. The $\infty$-categorical EHA  \cite[Theorem 1.0.2]{EHA} implies that the induced map 
\begin{equation}
    \cP \otimes \cR \to \cQ \otimes \cR
\end{equation}
is a $(d_1 + d_2 + 2)$-equivalence.

Our generalized EHA not only takes in account of the connectivity of $\infty$-operads, but also their arity. 
Let us  review the theory of arity restricted $\infty$-operads, as developed in \cite{dubey2024unital}.\footnote{See also \cite{MR3450458, barkan2023,goppl}.} Given $k \geq 1$, a $k$-restricted $\infty$-operad is a variant of $\infty$-operad where we only consider $n$-ary morphism spaces for $n \leq k$. In \cite{dubey2024unital}, the author and a collaborator define unital $k$-restricted $\infty$-operads, which form an $\infty$-category $\Opunleqk$; furthermore, we show
that the natural restriction functor $(-)^k \colon \Opun \to \Opunleqk$ has fully faithful left and right adjoints, which we denote by $\Lk$ and $\Rk$ respectively. 
Let $\cO$ be a unital $\infty$-operad; it follows that we have a converging filtration
\begin{equation}
    \mathrm{L}_1\cO^1 \to \mathrm{L}_2\cO^2 \to \cdots \to \Lk\cO^k \to \cdots \to \cO.
\end{equation}
When $\cO = \EE_1$, this filtration is precisely the $\AA_k$ filtration \eqref{eq:Ak-filtration}: the $\infty$-operad $\AA_k$ is defined to be $\Lk(\EE_1)^k$ (see \cite[Example 4.15]{dubey2024unital}), and the natural map $\AA_k \to \EE_1$ is simply the unit map. This is the precise meaning of property \eqref{enum-item:AAk-2}. 

One version of the generalized Eckmann-Hilton argument  is the following:
\begin{maintheorem}[{\cref{cor:main-thm-coherent}}]\label{thm:intro-gen-EHA-coherent}
    Fix $k \geq 1$.
    Let $f \colon \cP \to \cQ$ be a multi-homwise $d_1$-connected map between reduced $\infty$-operads such that its $k$-restriction $f^k \colon \cP^k \to \cQ^k$ is an equivalence. Let $\cR$ be a coherent multi-homwise $d_2$-connected $\infty$-operad, then the induced map 
    \begin{equation}
        \cP \otimes \cR \to \cQ \otimes \cR
    \end{equation}
    is a $(d_1 + k (d_2 + 2))$-equivalence. 
\end{maintheorem}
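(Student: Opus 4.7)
The plan is to induct on the arity parameter $k \geq 1$, using the $\infty$-categorical Eckmann-Hilton argument of \cite{EHA} as the base case and the Blakers-Massey type statement for algebras of coherent $\infty$-operads (proven earlier in the paper) as the main input to the inductive step.

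For the base case $k = 1$, both $\cP^1$ and $\cQ^1$ are contractible since $\cP$ and $\cQ$ are reduced, so the hypothesis that $f^1$ is an equivalence holds automatically. The desired conclusion---that $\cP \otimes \cR \to \cQ \otimes \cR$ is a $(d_1 + d_2 + 2)$-equivalence---is then precisely the content of \cite[Theorem 1.0.2]{EHA}. The coherence assumption on $\cR$ plays no role at this stage; it enters only in the inductive passage from $k-1$ to $k$.

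For the inductive step, the idea is that the hypothesis ``$f^k$ is an equivalence'' says that $f$ is supported in arities strictly greater than $k$. Using the $(\Lk, (-)^k)$-adjunction of \cite{dubey2024unital}, one factors $f$ in $\Opun$ so that this support is exhibited by a pushout-cofiber decomposition: roughly, $\cQ$ is obtained from $\cP$ by attaching operations of arity $> k$ along an intermediate reduced operad that restricts to $\cP^k \simeq \cQ^k$. Applying $-\otimes \cR$ and using coherence of $\cR$ to control the resulting pushout of $\cR$-algebras, the operadic Blakers-Massey estimate gives an extra factor of $(d_2 + 2)$ in connectivity beyond the bound $d_1 + (k-1)(d_2 + 2)$ predicted by the inductive hypothesis, yielding the claimed $(d_1 + k(d_2+2))$-equivalence.

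The main obstacle is the operadic Blakers-Massey estimate itself. In the non-coherent setting, operadic tensor products do not interact with pushouts in a sufficiently controlled way for the bookkeeping to close; coherence of $\cR$ is precisely what ensures that the relevant pushouts of $\cR$-algebras coincide with the underlying $\infty$-categorical pushouts, making the classical Blakers-Massey input applicable. Once this estimate is in place, the remainder of the proof is arity bookkeeping using the $\Lk$-filtration of $\cQ$ and compatibility checks for the adjunction counits.
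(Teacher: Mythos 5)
Your proposal correctly identifies the two main ingredients --- the Eckmann-Hilton argument of \cite{EHA} and an operadic Blakers-Massey estimate with coherence as its enabling hypothesis --- but the inductive architecture you build around them has genuine gaps, and it is not how the paper proceeds. First, your inductive step hinges on a ``pushout-cofiber decomposition'' of $f$ in $\Opun$, attaching arity-$(>k)$ cells, to which one then applies $- \otimes \cR$. No such decomposition is constructed, and the Boardman--Vogt tensor product of $\infty$-operads is not known to interact well with pushouts of operads, so even granting the decomposition there is no mechanism for transporting it across $- \otimes \cR$. Relatedly, the arithmetic of the step is unsupported: the inductive hypothesis is a $(d_1 + (k-1)(d_2+2))$-\emph{equivalence} of operads, not a connectivity statement about a single map with an identified cofiber, so there is nothing concrete for ``one more Blakers-Massey application'' to add $(d_2+2)$ to. Second, your account of what coherence buys is incorrect: coherence does \emph{not} make pushouts of $\cR$-algebras coincide with underlying pushouts in $\cC$ (this is false already for $\EE_\infty$, where pushouts of commutative algebras are relative tensor products, not underlying pushouts). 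What coherence actually gives, via \cite[Corollary 5.3.1.16]{HA} (\cref{prop:O-alg-preserves-pushout}), is that the induced symmetric monoidal structure on $\Alg_{\cR}(\cC)$ preserves pushouts in each variable; the paper uses this in \cref{prop:iter-colim} to identify punctured-cube colimits with iterated pushout-product maps and thereby run the induction on cube dimension in \cref{prop:op-BM}.

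The paper's route involves no induction on $k$ at this stage. It introduces $k$-wise $d$-connectedness (\cref{def:k-wise-d-connected}): the requirement that the maps $\colim(\square_{X_{\underline{n}}}|_{\leq k}) \to \prod_i X_i$ in $\Alg_{\cR}(\cC)$ be $d$-connected. The Blakers-Massey estimate, in the form \cref{cor:op-BM-k} applied to unit maps (all $d_i = -2$), shows that a coherent multi-homwise $d_2$-connected operad is $k$-wise $(k(d_2+2)-2)$-connected (\cref{cor:cohernet-d-connected}); this is where the full factor $k(d_2+2)$ appears in one shot, rather than accumulating $(d_2+2)$ per stage. The generalized EHA (\cref{thm:main-thm}) is then a single lifting argument: for a suitable topos $\cC$ one must show a contractible space of lifts of algebra structures along $f$, and \cref{lem:adjunction-section-lem} --- using exactly the hypothesis that $f^k$ is an equivalence, through the $\Rk$ localization of \cite{dubey2024unital} --- reduces this to lifting against $\Alg_{\cR}(\cC) \to \Rk(\Alg_{\cR}(\cC))$, which is multi-homwise $d_1$-truncated by \cref{prop:AlgO-RkAlgO-truncated} precisely because $\cR$ is $k$-wise $d_2$-connected; the factorization system of \cref{obs:op-fact-sys} finishes, since $\cP \to \cQ$ is surjective on colors and multi-homwise $d_1$-connected. \cref{cor:main-thm-coherent} is then the specialization of \cref{thm:main-thm} to $d_2 \mapsto k(d_2+2)-2$. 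If you want to salvage your outline, replace the cell-attachment induction with this lifting/factorization-system mechanism; your instinct that coherence feeds Blakers-Massey is right, but its role is to control the monoidal structure on algebra categories, not to identify algebra pushouts with underlying ones.
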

Note that there is a assumption on  $\cR$, called coherence  (\cite[Definition 3.1.1.9]{HA}), which is a technical condition on reduced $\infty$-operads. This condition  ensures that the tensor product of the $\infty$-category  of $\cR$-algebras preserves pushouts in each variable ({\cite[Corollary 5.3.1.16]{HA}}). 
The proof of \cref{thm:intro-gen-EHA-coherent} utilizes \cref{prop:op-BM}, which is a Blakers-Massey type result for algebras of coherent $\infty$-operads. We refer the reader to \cref{sec:BM-for-alg} for a review on the (dual) Blakers-Massey theorem as well as our generalization.

The last difficulty comes from the fact that while the $\EE_n$ operads are coherent ({\cite[Theorem 5.1.1.1]{HA}}), the $\AA_k$ operads for $1 \leq k \leq \infty$ are not (\cref{rem:Ak-not-coherent}). Therefore we cannot direct apply \cref{thm:intro-gen-EHA-coherent}. 
To remedy this, we introduce the notion of $k$-wise $d$-connected $\infty$-operads  and prove the following properties:
\begin{enumerate}
    \item A coherent multi-homwise $d$-connected $\infty$-operad is $k$-wise $k(d+2)-2$-connected.
    \item Given a map of reduced $\infty$-operads $\cO_1 \to \cO_2$, if $f$ is a $d$-equivalence and $\cO_2$ is $k$-wise $d$-connected, then $\cO_1$ is also $k$-wise $d$-connected.
\end{enumerate}
This allow us to 
use the connectivity bound on $\AA_n \to \EE_1$ to show that $\AA_n$ is $k$-wise $\min(n-3, k-2)$-connected (\cref{cor:AAn-k-wise-d-conn}).
Additionally, the generalized Eckmann-Hilton argument \cref{thm:intro-gen-EHA-coherent} also holds in the setting of $k$-wise $d$-connected $\infty$-operads:
\begin{maintheorem}[{\cref{thm:main-thm}}]\label{thm:intro-gen-EHA-k-wise}
    Let $f \colon \cP \to \cQ$ be a multi-homwise $d_1$-connected map between reduced $\infty$-operads such that the $k$-restriction $f^k \colon \cP^k \to \cQ^k$ is an equivalence. Let $\cR$ be a reduced $k$-wise $d_2$-connected $\infty$-operad, then the induced map 
    \begin{equation}
        \cP \otimes \cR \to \cQ \otimes \cR
    \end{equation}
    is a $(d_1 + d_2 + 2)$-equivalence.
\end{maintheorem}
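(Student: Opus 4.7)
The plan is to follow the blueprint of the $\infty$-categorical Eckmann–Hilton argument of [EHA] while substituting the arity-aware $k$-wise $d_2$-connectivity of $\cR$ for the uniform multi-homwise connectivity used there. Recall that a map of reduced $\infty$-operads is a $d$-equivalence precisely when the induced restriction of algebras in any symmetric monoidal $(d+1)$-category is an equivalence, so the goal is to show that for every symmetric monoidal $(d_1 + d_2 + 2)$-category $\cC$, the restriction functor $\Alg_{\cQ \otimes \cR}(\cC) \to \Alg_{\cP \otimes \cR}(\cC)$ is an equivalence. Via the adjunction $\Alg_{\cO \otimes \cR}(\cC) \simeq \Alg_{\cR}(\Alg_{\cO}(\cC))$ the comparison takes place inside $\cR$-algebras, which is exactly the habitat of $\cref{prop:op-BM}$.

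First I would present $f\colon \cP \to \cQ$ as an iterated cell attachment using the adjunction $\Lk \dashv (-)^k$ from [dubey2024unital]. Since $f^k$ is an equivalence, the defect of $f$ lives entirely in arities strictly greater than $k$, and the hypothesis that $f$ is multi-homwise $d_1$-connected translates into a transfinite pushout presentation of $f$ in which each attached cell is an arity-$n$ operation ($n > k$) of connectivity at least $d_1$. Because the tensor product of $\infty$-operads preserves colimits in each variable, and $d$-equivalences are stable under transfinite composites of pushouts of reduced operads, the task reduces to handling a single such cell attachment.

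The key step is then a connectivity estimate on the cofibre of the cell attachment after tensoring with $\cR$. The $k$-wise $d_2$-connectivity of $\cR$ is designed precisely so that threading an arity-$n$ cell ($n > k$) through the $\cR$-operations contributes $(d_2 + 2)$ to the connectivity at the level of $\cR$-algebras — this is what the dual Blakers–Massey theorem of $\cref{prop:op-BM}$ produces once one isolates the arity-$n$ piece. Summing with the $d_1$-connectivity of the cell itself yields the asserted $(d_1 + d_2 + 2)$-equivalence.

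I expect the main obstacle to be that $\cref{prop:op-BM}$ is formulated for algebras of a \emph{coherent} operad, and we have not assumed $\cR$ coherent. The remedy I would pursue is to apply the Blakers–Massey statement to a coherent approximation admitting $\cR$ as a $k$-wise $d_2$-connected retract (for instance, a free resolution of $\cR$ in an appropriate sense), and then use the stability property (2) listed just before $\cref{thm:main-thm}$ — that $k$-wise $d$-connectivity is preserved under pulling back along $d$-equivalences — to transport the connectivity bound back to $\cR$. Assembling the estimates across the cell filtration of $f$ gives the claimed $(d_1 + d_2 + 2)$-equivalence.
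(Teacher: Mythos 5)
There is a genuine gap, and it sits at the center of your argument: the cellular presentation of $f$. You assert that multi-homwise $d_1$-connectivity of $f$ together with $f^k$ being an equivalence yields a transfinite pushout presentation of $f$ by free cell attachments in arities $>k$ of connectivity $\geq d_1$. No such presentation or obstruction theory for maps of reduced $\infty$-operads is established in this paper or its inputs, and it is not a formal consequence of the adjunction $\Lk \dashv (-)^k$; extracting cells whose arity \emph{and} connectivity are simultaneously controlled by the hypotheses is itself a Blakers--Massey-type problem for operads, so the reduction is close to circular. Even granting the presentation, your ``key step'' --- that pushing a single arity-$n$ cell ($n>k$) through $\cR$ costs $(d_2+2)$ --- is asserted rather than proved, and the mechanism you invoke, \cref{prop:op-BM}, requires $\cR$ to be coherent, which is not assumed. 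Your proposed repair (a coherent approximation admitting $\cR$ as a $k$-wise $d_2$-connected retract, e.g.\ a ``free resolution'') has no known construction: coherence is a delicate condition (even $\AA_k$ fails it, \cref{rem:Ak-not-coherent}), and transporting the bound back via \cref{prop:k-wise-d-conn-and-d-eq} would require already knowing the comparison map is a $d_2$-equivalence, which is again the sort of statement being proved. You have also inverted the logical role of \cref{prop:op-BM} in the paper: it is used only to show that coherent multi-homwise $d$-connected operads are $k$-wise connected (\cref{cor:cohernet-d-connected}), which feeds \cref{cor:main-thm-coherent}; the theorem at hand takes $k$-wise $d_2$-connectivity as a hypothesis precisely so that no coherence and no Blakers--Massey input is needed in its proof.

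For contrast, the paper's proof is a direct lifting argument with no cell decomposition. By \cite{EHA} one tests the $(d_1+d_2+2)$-equivalence against $(D+1)$-topoi $\cC$ with $D = d_1+d_2+2$, and by adjunction the problem becomes extending maps $\cP \to \Alg_{\cR}(\cC)$ along $f$. Using the adjunction $(-)^k \dashv \Rk$ and the hypothesis that $f^k$ is an equivalence, \cref{lem:adjunction-section-lem} exhibits the extension space as a retract of the space of lifts in a square against the unit map $\Alg_{\cR}(\cC) \to \Rk(\Alg_{\cR}(\cC))$. The $k$-wise $d_2$-connectivity of $\cR$ enters exactly once, through \cref{prop:AlgO-RkAlgO-truncated}, to show this unit map is multi-homwise $d_1$-truncated; since $f$ is surjective on colors and multi-homwise $d_1$-connected, the factorization system of \cref{obs:op-fact-sys} makes the lifting space contractible. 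If you want to salvage your write-up, replace the cell-attachment reduction with this orthogonality argument: the quantitative heart is the truncatedness of $\cC \to \Rk\cC$ at the level of $\cR$-algebras, not a connectivity estimate on cofibers.
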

Finally \cref{thm:intro-main-thm} follows from \cref{thm:intro-gen-EHA-k-wise} via an inductive argument.

\textbf{Outline:} In \cref{sec:conn-trunn} we review the various notions of connectedness and truncatedness for objects in $\infty$-categories 
as well as for $\infty$-categories and $\infty$-operads themselves.
In \cref{sec:BM-for-alg} we prove  as \cref{prop:op-BM} the Blakers-Massey type statement for algebras of coherent $\infty$-operads.
After reviewing the notion of $k$-restricted $\infty$-operads, 
 we define $k$-wise $n$-connected $\infty$-operads and prove  \cref{thm:intro-gen-EHA-k-wise} in \cref{sec:gen-EHA}. Finally, in \cref{sec:En-in-m-cat} we prove \cref{thm:intro-main-thm} and spell out its consequences for $m$-categories with $-1 \leq m \leq 2$.

\textbf{Acknowledgements:}
The author would like to thank Aaron Mazel-Gee, Catharina Stroppel, and Paul Wedrich for previous collaboration, from which this work grew out of. The author would especially like to thank David Reutter for the previous collaboration and for pointing out the crucial role of coherent $\infty$-operads. The author is also grateful to Shaul Barkan, Tomer Schlank, and Andy Senger for helpful discussions. Lastly, the author gracially thank Amartya Dubey for prior collaboration that this work depends on, and Cameron Krulewski for helpful comments on an early version of the draft.

The author gratefully acknowledges the financial support provided by the Simons Collaboration on Global Categorical Symmetries.

\section{Connectedness and truncatedness}\label{sec:conn-trunn}
\subsection{For objects in an $\infty$-categories}\label{subsec:conn-trunn-for-obj}
Let $\Spaces$ be the $\infty$-category of spaces.
\begin{definition}\label{def:conn-trun-for-spaces}
    For $d \geq 0$, a space $X$ is called \emph{$d$-truncated} if $\pi_i(X, x) = 0$ for all $i > d$ and $x \in X$. In addition, a space $X$ is called \emph{$d$-connected} if $X$ is connected and $\pi_i(X, x) = 0$ for all $i \leq d$ and $x \in X$.
    For $d = -1$, we declare that a space $X$ is \emph{$(-1)$-truncated} if it is nonempty, and $X$ is \emph{$(-1)$-connected} if it is either empty or contractible. 
    For $d = -2$, we declare that a space $X$ is \emph{$(-2)$-truncated} if it is contractible, and $X$ is always \emph{$(-2)$-connected}.

    For $d \geq -2$, we declare that a map of spaces is \emph{$d$-truncated} (\emph{$d$-connected}) if its fibers are all $d$-truncated (resp. $d$-connected).
\end{definition}
By \cite[Example 5.2.8.16]{HTT}, the classes of ($d$-connected, $d$-truncated) maps form a factorization system on $\Spaces$.\footnote{We refer the reader to \cite[\S 5.2.8]{HTT} for an introduction to factorization systems.}
\begin{warning}\label{warning:d-conn-d-1-conn}
    Readers be warned that $d$-connected objects and morphisms would be called $(d+1)$-connected in many parts of the literature, such  as in 
    \cite{Goo}, \cite[Definition 6.5.1.10]{HTT}, and \cite{cubical}.
\end{warning}

The notions of truncatedness and connectedness naturally extend to objects and morphisms in general $\infty$-categories:
\begin{definition}[{\cite[Definition 5.5.6.1]{HTT}}]\label{def:d-truncated-for-general-cat}
    For $d \geq -2$, 
    an object $X$ in $\cC$ is \emph{$d$-truncated} if for every $Z \in \cC$, the morphism space $\Hom_{\cC}(Z, X)$ is $d$-truncated.
    Similarly, a map $f \colon X \to Y$ in an $\infty$-category $\cC$ is called \emph{$d$-truncated} if for every $Z \in \cC$, the induced map 
    \begin{equation}
        \Hom_{\cC}(Z, X) \to \Hom_{\cC}(Z, Y)
    \end{equation}
    is a $d$-truncated map of spaces. 
\end{definition}
If $\cC$ has a terminal object $*_\cC$, then an object $X$ in $\cC$ is $d$-truncated if and only if the map $X \to *_{\cC}$ is $d$-truncated.
\begin{definition}\label{def:d-connected-for-general-cat}
    For $d \geq -2$, a map $f \colon A \to B$ in an $\infty$-category $\cC$ is called \emph{$d$-connected} if it has the left lifting property against $d$-truncated morphisms in $\cC$, that is, for every $d$-truncated morphism $g \colon X \to Y$ and  commutative diagram
    \begin{equation}
        \begin{tikzcd}
        A \ar[r] \ar[d, "f"] & X \ar[d, "g"] \\ 
        B \ar[r] \ar[ru, dashed] & Y,
        \end{tikzcd}
    \end{equation}
    there exists a contractible space of lifts.
\end{definition}
\begin{observation}\label{obs:lim-colim-preserves}
   Let $\cC$ be an $\infty$-category. Colimits in the arrow category $\Fun(\Delta^1, \cC)$ preserve $d$-connected morphisms, while limits in the arrow category $\Fun(\Delta^1, \cC)$  preserves $d$-truncated morphisms. 
\end{observation}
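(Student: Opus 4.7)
The plan is to argue directly from the two definitions, treating the limit and colimit halves separately and dually.

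For the limit statement, I would use the mapping space characterization of \cref{def:d-truncated-for-general-cat}. Suppose $g\colon X \to Y$ is a limit in $\Fun(\Delta^1, \cC)$ of a diagram of $d$-truncated morphisms $g_i \colon X_i \to Y_i$. Since limits in the arrow category are computed componentwise, $X = \lim X_i$ and $Y = \lim Y_i$. For any $Z \in \cC$, the functor $\Hom_\cC(Z, -)$ preserves limits, so the arrow $\Hom_\cC(Z, X) \to \Hom_\cC(Z, Y)$ is the limit in $\Fun(\Delta^1, \Spaces)$ of the arrows $\Hom_\cC(Z, X_i) \to \Hom_\cC(Z, Y_i)$, each of which is $d$-truncated. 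The class of $d$-truncated maps of spaces is closed under limits (the fiber of a limit is a limit of fibers, and a limit of $d$-truncated spaces is $d$-truncated), so the limit map is $d$-truncated, showing $g$ is $d$-truncated.

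For the colimit statement, I would use the lifting characterization of \cref{def:d-connected-for-general-cat}. Suppose $f\colon A \to B$ is a colimit of $d$-connected morphisms $f_i \colon A_i \to B_i$, so that $A = \colim A_i$ and $B = \colim B_i$. Given any $d$-truncated $g \colon X \to Y$ and any commutative square from $f$ to $g$, the space of diagonal lifts is identified with the fiber, over the point determined by the square, of the canonical map
\[
\Hom_\cC(B, X) \longrightarrow \Hom_\cC(A, X) \times_{\Hom_\cC(A, Y)} \Hom_\cC(B, Y).
\]
Using that $\Hom_\cC(-, Z)$ carries colimits to limits and that fiber products commute with limits, this map is the limit over $i$ of the analogous maps attached to the induced squares from $f_i$ to $g$. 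Each of those maps has contractible fibers over every basepoint by the $d$-connectedness of $f_i$, so the limit map has contractible fiber over our chosen basepoint, exhibiting a contractible space of lifts. Hence $f$ has the left lifting property against every $d$-truncated morphism, and therefore $f$ is $d$-connected.

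There is no real conceptual obstacle; the only work is bookkeeping. One needs to coherently identify the square determined by $f$ with the compatible family of squares induced by the $f_i$, and to coherently identify the space of diagonal lifts with the limit over $i$ of the spaces of lifts. Both follow from the fact that the evaluation functors $\Fun(\Delta^1, \cC) \to \cC$ jointly create limits and colimits, together with standard commutation of limits with fiber products in $\Spaces$.
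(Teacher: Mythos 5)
Your proof is correct; the paper states this observation without proof, and your argument is exactly the standard justification it implicitly relies on: the mapping-space definition of $d$-truncatedness is preserved by limits since $\Hom_{\cC}(Z,-)$ preserves limits and $d$-truncated maps of spaces are closed under limits, while $d$-connectedness, being defined by a left lifting property, is closed under colimits because $\Hom_{\cC}(-,Z)$ turns colimits into limits and a limit of equivalences (each comparison map $\Hom(B_i,X)\to\Hom(A_i,X)\times_{\Hom(A_i,Y)}\Hom(B_i,Y)$ being an equivalence by the contractible-lifts condition) is an equivalence. The only implicit hypothesis worth noting is that the (co)limits in $\Fun(\Delta^1,\cC)$ are computed pointwise, which holds in all settings where the paper applies the observation.
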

\begin{observation}[{\cite[Proposition 4.6]{gep17}}]\label{obs:presentable-d-fact}
    Let $\cC$ be a presentable $\infty$-category (\cite[Definition 5.5.0.1]{HTT}). For any $d \geq -2$, the classes of ($d$-connected, $d$-truncated) morphisms in $\cC$ define a factorization system. 
\end{observation}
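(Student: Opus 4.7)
The plan is to verify the two defining axioms of an orthogonal factorization system on $\cC$: orthogonality between the two classes and existence of factorizations. Orthogonality is built into \cref{def:d-connected-for-general-cat}, which already demands a contractible space of lifts of every $d$-connected morphism against every $d$-truncated morphism; closure of each class under retracts then follows formally once factorizations are established. So the substantive content is the existence of factorizations.

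For the factorization step, given $f \colon A \to B$ in $\cC$, I would pass to the slice $\cC_{/B}$, which is presentable by \cite[Proposition 5.5.3.11]{HTT}. The key dictionary is that a morphism $h \colon X \to Y$ in $\cC$ is $d$-truncated in the sense of \cref{def:d-truncated-for-general-cat} if and only if $h$, viewed as an object of $\cC_{/Y}$, is a $d$-truncated object, because $\Hom_{\cC_{/Y}}(Z \to Y, h)$ is a fiber of $\Hom_\cC(Z, X) \to \Hom_\cC(Z, Y)$, and a map of spaces is $d$-truncated iff the induced map on all fibers is $d$-truncated. Applying the reflective truncation functor $\tau_{\leq d}$ on $\cC_{/B}$, which exists by \cite[Proposition 5.5.6.18]{HTT}, to the object $f$ of $\cC_{/B}$ produces a factorization
\begin{equation}
    A \xrightarrow{\eta} T \xrightarrow{p} B,
\end{equation}
with $T = \tau_{\leq d}^{\cC_{/B}}(f)$, and the dictionary above shows that $p$ is $d$-truncated in $\cC$.

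It remains to show that $\eta$ is $d$-connected. Given a $d$-truncated morphism $g \colon X \to Y$ in $\cC$ together with a commutative square from $\eta$ to $g$, I would form the pullback $X \times_Y T \to T$, which is again $d$-truncated since pullback preserves $d$-truncatedness pointwise on $\Hom_\cC(Z, -)$. The original square then transports to a square in $\cC_{/B}$ (using the composite $X \times_Y T \to T \to B$ to endow the source with a map to $B$) in which the right edge is a morphism between $d$-truncated objects of $\cC_{/B}$; orthogonality of the localization unit $\eta$ against such maps yields a contractible space of lifts $T \to X \times_Y T$ in $\cC_{/B}$, which compose with $X \times_Y T \to X$ to give a contractible space of lifts $T \to X$ solving the original problem in $\cC$.

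The main obstacle is precisely this last step: ensuring that orthogonality of the localization unit $\eta$ inside the slice $\cC_{/B}$ yields the required lifting property in $\cC$ for arbitrary $d$-truncated morphisms $g$ whose target $Y$ need not sit over $B$. The pullback-into-$\cC_{/B}$ maneuver above resolves this by replacing $g$ with the pulled-back morphism $X \times_Y T \to T$, which naturally lives over $B$. Closure of both classes under retracts then follows formally from the established orthogonality and existence of factorizations.
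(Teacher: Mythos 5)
Your argument is correct; note that the paper gives no proof of this observation, recording it with a citation to \cite[Proposition 4.6]{gep17}, so the comparison is with the cited argument rather than with an in-paper proof. Your route — factor $f \colon A \to B$ by applying the $d$-truncation functor of the presentable slice $\cC_{/B}$ to $f$ regarded as an object there, using the dictionary that a morphism is $d$-truncated in $\cC$ if and only if it is a $d$-truncated object of the slice over its target — is the standard construction (it is how Lurie treats the case $\cC = \Spaces$ in \cite[Example 5.2.8.16]{HTT}), and your pullback maneuver for proving that the unit $\eta$ is $d$-connected is exactly the right move, since orthogonality against the localization unit is only available inside $\cC_{/B}$. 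Two steps are glossed and each deserves a line. First, to know that $X \times_Y T \to B$ is a $d$-truncated object of $\cC_{/B}$ you also need closure of $d$-truncated morphisms under composition (standard, via the fiber sequence relating the fibers of $X \times_Y T \to T$, of $p \colon T \to B$, and of their composite). Second, contractibility of the lift space in $\cC_{/B}$ does not by itself yield contractibility in $\cC$: composing with the projection $X \times_Y T \to X$ only produces a map between the two lift spaces, so you should check this comparison is an equivalence. It is: lifts of the original square against $g$ agree with lifts of the base-changed square against $X \times_Y T \to T$ with bottom edge $\mathrm{id}_T$ by the universal property of the pullback, and for that square the forgetful map from over-$B$ lifts to lifts in $\cC$ is an equivalence because the over-$B$ compatibility data of any candidate lift is canonically induced through the bottom edge (the relevant comparison space of over-$B$ data is contractible). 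With those two remarks inserted the proof is complete; your treatment of orthogonality (built into \cref{def:d-connected-for-general-cat}) and of retract closure is fine.
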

Let $\cC$ be a presentable $\infty$-category. For $d \geq -2$, we denote by $\cC_{\leq d}$ the full subcategory of $d$-truncated objects, and $\tau_{\leq d} \colon \cC \to \cC_{\leq d}$ the truncation functor induced by the factorization system.

Recall that an $\infty$-topos is a left exact localization of $\Spaces^{K}$ for some small $\infty$-category $\cC$ (\cite[Definition 6.1.0.2]{HTT}).
\begin{definition}\label{def:d-topos}
    Fix $m \geq -1$. An $\infty$-category
    is a \emph{$m$-topos} if it is the full subcategory on $(m-1)$-truncated objects in an $\infty$-topos.
\end{definition}
\begin{remark}
    By \cite[Theorem 6.4.1.5]{HTT}, the definition above is equivalent to the original notion defined in \cite[Definition 6.4.1.1]{HTT}.
\end{remark}

\begin{definition}[{\cite[\S 4.3]{EHA}}]\label{def:d-1/2-eq}
   Let $\cC$ a presentable $\infty$-category. For $d \geq -2$, a morphism $f \colon X \to Y$ in $\cC$ is called \emph{$(d-\onehalf)$-connected} if the induced map $\tau_{\leq d} f \colon \tau_{\leq d} X \to \tau_{\leq d} Y$ is an equivalence.
\end{definition}
By \cite[Lemma. 4.3.2]{EHA}, $d$-connected morphisms in a presentable $\infty$-category are $(d-\onehalf)$-connected. On the other hand, by \cite[Lemma 4.3.4]{EHA}, $(d+\onehalf)$-morphisms in an $m$-topos are $d$-connected.

\subsection{For $\infty$-categories and $\infty$-operads}\label{subsec:conn-trunn-inf-cat}
In this subsection we extend the concepts of connectedness and truncatedness to $\infty$-categories and $\infty$-operads. 
For a comprehensive study of these notions in $(\infty, n)$-categories, we direct the reader to  \cite[\S 5]{braiding}.
\begin{definition}\label{def:cat-surj-on-obj-homwise}
    A functor of $\infty$-categories $F \colon \cC \to \cD$ is \emph{surjective on objects} if every object $d \in \cD$ is of the form $F(c)$ for some $c \in \cC$.
    For $d \geq -2$, a functor of $\infty$-categories $F \colon \cC \to \cD$ is \emph{homwise $d$-truncated} (resp. \emph{homwise $d$-connected}) if for every pair of objects $c_1, c_2 \in \cC$, the induced map 
    \begin{equation}
        \Hom_{\cC}(c_1, c_2) \to \Hom_{\cD}(f(c_1), f(c_2))
    \end{equation}
    is $d$-truncated (resp. $d$-connected).

    For $d \geq -1$, an $\infty$-category $\cC$ is called a \emph{$d$-category} if 
    the functor $\cC \to \pt$ is homwise $(d-1)$-truncated; equivalently, 
    for every $c_1, c_2 \in \cC$, the space $\Hom_{\cC}(c_1, c_2)$ is $(d-1)$-truncated.
\end{definition}
\begin{example}\label{ex:d-1-truncated-spaces}
    For $d \geq -1$, the full subcategory $\Spaces_{\leq (d-1)}$ of $(d-1)$-truncated spaces is a $d$-category. In particular, the category of sets $\Spaces_{\leq 0}$ is a $1$-category. More generally, $d$-topoi are $d$-categories.
\end{example}
By \cite[Theorem 5.3.7]{braiding}, the classes of (surjective on objects and homwise $d$-connected, homwise $d$-truncated) morphisms form a factorization system on the $\infty$-categories $\cat$ of 
large
$\infty$-categories.
Let $\Cat_d$ denote the full $\infty$-category of $d$-categories. By the factorization system we have a localization functor $h_d \colon \Cat \to \Cat_d$, left adjoint to the fully faithful inclusion $\Cat_d \hookrightarrow \Cat$.

\begin{warning}\label{warn:2-diff-fact}
    The notion of $d$-truncated morphisms in the $\infty$-category $\cat$ is different yet related to the notion of homwise $d$-truncated 
    functors. We refer the reader to  \cite[Warning 5.3.9]{braiding} for a detailed discussion.
\end{warning}
\begin{definition}\label{def:cat-d-equivalence}
    For $d \geq -2$, a functor of $\infty$-categories $F \colon \cC \to \cD$ is called a \emph{$d$-equivalence} if the induced functor $h_d F \colon h_d \cC \to h_d \cD$ is an equivalence.
 \end{definition}
Note that a surjective on objects and homwise $d$-connected functor betwen $\infty$-categories is a $d$-equivalence.
Next we turn to $\infty$-operads.
\begin{definition}\label{def:op-surj-on-obj-homwise}
    A functor of $\infty$-operads is \emph{surjective-on-colors} if the induced map on underlying $\infty$-categories is surjective on objects.

    For $d \geq -2$, a map of $\infty$-operads $f \colon \cO \to \cO'$ is \emph{multi-homwise $d$-truncated} (resp. \emph{multi-homwise $d$-connected}) if for every list of objects $X_1, \cdots, X_n, Y$ in $\cO$, the induced map 
    \begin{equation}
        \Mul_{\cO}(X_1, \cdots, X_n; Y) \to \Mul_{\cO'}(f(X_1), \cdots, f(X_n); f(Y))
    \end{equation}
    is $d$-truncated (resp. $d$-connected).
    
    An $\infty$-operad $\cO$ is called a \emph{$d$-operad} if 
    the map $\cO \to \EE_\infty$ is homwise $(d-1)$-truncated. Equivalently, 
    for every list of objects $X_1, \cdots, X_n, Y$ in $\cO$, the space $\Mul_{\cO}(X_1, \cdots, X_n; Y)$ is $(d-1)$-truncated. 
\end{definition}
\begin{remark}
    By \cite[Lemma 7.5.2]{braiding}, a map of $\infty$-operads $f \colon \cO \to \cO'$ is surjective on colors (resp. multi-homwise $d$-truncated, multi-homwise $d$-connected) if and only if the functor on total space $f^{\otimes} \colon \cO^{\otimes} \to (\cO')^{\otimes}$ is surjective on objects (resp. homwise $d$-truncated, homwise $d$-connected).
\end{remark}

\begin{example}\label{ex:reduced-surjective-on-colors}
    Recall that an $\infty$-operad $\cO$ is reduced if it is unital and the underlying $\infty$-category is equivalent to $\pt$. Maps between reduced $\infty$-operads are automatically surjective on colors.
\end{example}
\begin{example}\label{ex:connected-level}
    Given $0 \leq n < m \leq \infty$, the canonical map $\EE_n \to \EE_m$ is multi-homwise $(n-2)$-connected. 
    Given $1 \leq n < m \leq \infty$, the canonical map 
    $\AA_n \to \AA_m$ is multi-homwise $(n-3)$-connected. This follows from the explicit description of the multi-ary morphism spaces in \cite[Example 4.1.5]{dubey2024unital}.
\end{example}
\begin{observation}\label{obs:op-fact-sys}
    By \cite[Proposition 7.5.3]{braiding}, the classes of (surjective on colors and multi-homwise $d$-connected, multi-homwise $d$-truncated)
    morphisms form a factorization system on the $\infty$-category $\Op$ of large $\infty$-operads. Let $\Op_d$ be the full subcategory of $\Op$ consisting of $d$-operads. We have a localization functor $h_d \colon \Op \to \Op_d$, left adjoint to the fully faithful inclusion $\Op_d \hookrightarrow \Op$.\footnote{Strict models of $d$-categories and $d$-operads are studied in \cite{SY19cat}.}  
\end{observation}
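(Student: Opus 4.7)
The plan is to deduce the factorization system on $\Op$ from the corresponding factorization system on $\Cat$, namely \cite[Theorem 5.3.7]{braiding} as recorded above. By the remark preceding \cref{ex:reduced-surjective-on-colors}, for any map $f \colon \cO \to \cO'$ of $\infty$-operads, the properties of being surjective-on-colors, multi-homwise $d$-connected, and multi-homwise $d$-truncated all transfer verbatim between $f$ and the induced functor $f^\otimes \colon \cO^\otimes \to (\cO')^\otimes$ on total spaces. This suggests constructing the factorization of $f$ by factoring $f^\otimes$ at the level of $\infty$-categories and then promoting the intermediate object to an $\infty$-operad.

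Concretely, given $f$, I would first factor $f^\otimes$ in $\Cat$ as
\begin{equation}
\cO^\otimes \xrightarrow{g} \cE \xrightarrow{h} (\cO')^\otimes,
\end{equation}
with $g$ surjective-on-objects and homwise $d$-connected, and $h$ homwise $d$-truncated. I would then equip $\cE$ with the composite projection $\cE \to (\cO')^\otimes \to \Finstar$ and verify that this exhibits $\cE$ as an $\infty$-operad for which $g$ and $h$ are operadic morphisms. Existence of cocartesian lifts over inert edges in $\Finstar$ should follow by pushing forward inert lifts from $\cO^\otimes$ along the essentially surjective functor $g$, and then checking cocartesianness by postcomposing with $h$ to land in the known $\infty$-operad $(\cO')^\otimes$. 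The Segal decomposition for multi-morphism spaces in $\cE$ should in turn be transported from $\cO^\otimes$ along $g$, using that $g$ is surjective on objects and homwise $d$-connected.

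The main obstacle will be verifying this operadic structure on $\cE$: namely, that inert cocartesian lifts really descend along $g$, and that the Segal-type product decomposition of multi-morphism spaces in $\cE$ persists after taking the connected/truncated factorization. Once this is done, the orthogonality and uniqueness properties for the two classes of operadic morphisms follow formally from the corresponding statements in $\Cat$.

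Finally, for the reflective-subcategory claim, note that by \cref{def:op-surj-on-obj-homwise} an $\infty$-operad $\cO$ is a $d$-operad iff the canonical map $\cO \to \EE_\infty$ is multi-homwise $(d-1)$-truncated. Applying the factorization system just established to this map produces a canonical $\cO \to h_d \cO$ with $h_d \cO \in \Op_d$, and left-adjointness of $h_d$ to the inclusion follows from the orthogonality relation between the two classes of maps in the factorization system.
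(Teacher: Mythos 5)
Note first that the paper itself does not prove this statement: it is an \emph{Observation} justified entirely by the citation to \cite[Proposition 7.5.3]{braiding}, and your proposal is in outline the expected proof of that cited result — transfer the three properties along $(-)^{\otimes}$ (the remark after \cref{def:op-surj-on-obj-homwise}, i.e. \cite[Lemma 7.5.2]{braiding}), factor $f^{\otimes}$ using the categorical factorization system of \cite[Theorem 5.3.7]{braiding}, and show the intermediate $\infty$-category $\cE$ is again an $\infty$-operad. Your derivation of the localization $h_d$, by factoring the terminal map $\cO \to \EE_\infty$ (at truncation level $d-1$) and invoking orthogonality, is also correct and standard.

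However, there is a genuine gap at exactly the point you flag, and the mechanism you propose there would fail. You suggest checking cocartesianness of the candidate inert lift $g(x) \to g(\alpha_! x)$ in $\cE$ ``by postcomposing with $h$ to land in the known $\infty$-operad $(\cO')^{\otimes}$.'' This cannot work as stated: $h$ is only homwise $d$-truncated, so it does not reflect equivalences of mapping spaces, and knowing that $h$ carries the edge to a cocartesian edge of $(\cO')^{\otimes}$ gives no control over the edge in $\cE$ itself. The correct mechanism — which also handles the Segal condition — is the essential uniqueness and functoriality of $(d\text{-connected}, d\text{-truncated})$ factorizations in $\Spaces$ (\cite[Example 5.2.8.16]{HTT}). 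Since $g$ is surjective on objects and homwise $d$-connected while $h$ is homwise $d$-truncated, every mapping space of $\cE$ is canonically identified with the middle term of the factorization of $\Hom_{\cO^{\otimes}}(x,y) \to \Hom_{(\cO')^{\otimes}}(f^{\otimes}x, f^{\otimes}y)$. Cocartesianness of $g(x) \to g(\alpha_! x)$ then follows because the relevant comparison maps on \emph{both} outer terms are equivalences (the chosen lifts are cocartesian in $\cO^{\otimes}$ and in $(\cO')^{\otimes}$), hence so is the induced map on middle terms; and the Segal decomposition of multi-morphism spaces in $\cE$ persists because both classes of maps of spaces are closed under finite products (equivalently, $\tau_{\leq d}$ preserves finite products), so factorization commutes with the products appearing in the Segal condition. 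Two smaller points to make explicit: the factorization should be performed compatibly with the projections to $\Finstar$ (e.g.\ in $\Cat_{/\Finstar}$, or verifying directly that your composite projection works), and in the final orthogonality step one must check that the lift produced in $\Cat$ is actually a map of $\infty$-operads, i.e. preserves inert cocartesian edges — this uses that left-class maps are surjective on objects, so inert cocartesian edges downstairs are, up to equivalence, images of those upstairs.
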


\begin{observation}\label{obs:symmetric-monoidal-d-op}
    A symmetric monoidal $\infty$-category is a $d$-operad if and only if the underlying $\infty$-category is a $d$-category.
\end{observation}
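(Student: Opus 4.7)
The plan is to reduce the equivalence to the standard fact that in a symmetric monoidal $\infty$-category $\cC^\otimes$, every $n$-ary multi-morphism space is corepresented by the tensor product. Writing $\cC$ for the underlying $\infty$-category of $\cC^\otimes$, one has a natural equivalence
\begin{equation}
    \Mul_{\cC^\otimes}(X_1, \ldots, X_n; Y) \simeq \Hom_{\cC}(X_1 \otimes \cdots \otimes X_n, Y)
\end{equation}
for every list of objects $X_1, \ldots, X_n, Y$ in $\cC$. In particular, taking $n = 1$ identifies the $1$-ary multi-morphism spaces of $\cC^\otimes$ with the hom spaces of $\cC$.

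For the forward direction, if $\cC^\otimes$ is a $d$-operad, then by \cref{def:op-surj-on-obj-homwise} the space $\Mul_{\cC^\otimes}(X; Y)$ is $(d-1)$-truncated for every pair of objects $X, Y$; under the identification above this is exactly $\Hom_{\cC}(X, Y)$, so $\cC$ is a $d$-category in the sense of \cref{def:cat-surj-on-obj-homwise}.

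Conversely, suppose $\cC$ is a $d$-category, so that all its hom spaces are $(d-1)$-truncated. Then for any list $X_1, \ldots, X_n, Y$, the space $\Mul_{\cC^\otimes}(X_1, \ldots, X_n; Y)$ is equivalent to the hom space $\Hom_{\cC}(X_1 \otimes \cdots \otimes X_n, Y)$ in $\cC$, hence $(d-1)$-truncated. This is exactly what it means for $\cC^\otimes$ to be a $d$-operad.

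I do not anticipate any serious obstacle: once one invokes the corepresentability of multi-morphisms by tensor products (which is built into the definition of a symmetric monoidal $\infty$-category as a cocartesian fibration over $\mathrm{Fin}_*$), both implications are a one-line comparison of truncatedness conditions. The only mild subtlety is being careful about the $n = 0$ case, where $\Mul_{\cC^\otimes}(\varnothing; Y) \simeq \Hom_{\cC}(\mathbf{1}, Y)$ is still a hom space in $\cC$, so the argument goes through uniformly.
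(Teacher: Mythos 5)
Your proof is correct, and it is exactly the reasoning the paper leaves implicit: the paper states this as an unproved observation, whose evident justification is precisely the corepresentability $\Mul_{\cC}(X_1,\ldots,X_n;Y)\simeq \Hom_{\cC}(X_1\otimes\cdots\otimes X_n,Y)$ that you invoke. Both directions then reduce, as you say, to comparing $(d-1)$-truncatedness of hom spaces, with the $n=0$ case handled by the unit.
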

Lastly, we have the notion of $d$-equivalence for maps between $\infty$-operads.
\begin{definition}\label{def:op-d-equivalence}
    For $d \geq -2$, a map of $\infty$-operads $f \colon \cO \to \cO'$ is called a \emph{$d$-equivalence} if the induced functor $h_d f \colon h_d \cO \to h_d \cO'$ is an equivalence.
\end{definition}
Note that a surjective on colors and multi-homwise $d$-connected map of $\infty$-operads is a $d$-equivalence.
\begin{observation}\label{obs:d-equivalence-d-cat}
    Let $f \colon \cO \to \cO'$ be a  map of $\infty$-operads  and $\cC$ be a symmetric monoidal $\infty$-category. If $f$ is a $d$-equivalence and $\cC$ is a $(d+1)$-category for some $d \geq -2 $, by \cref{obs:symmetric-monoidal-d-op} the induced map 
    \begin{equation}
        \Alg_{\cO'}(\cC) \to \Alg_{\cO}(\cC)
    \end{equation}
    is an equivalence of $\infty$-categories.
\end{observation}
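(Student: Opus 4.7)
The plan is to deduce the equivalence from the localization adjunction $h_d \dashv \iota \colon \Op_d \hookrightarrow \Op$ of \cref{obs:op-fact-sys}, using \cref{obs:symmetric-monoidal-d-op} to place $\cC$ on the local side of this adjunction.

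First I would invoke \cref{obs:symmetric-monoidal-d-op} to conclude that $\cC$, viewed as an $\infty$-operad via its symmetric monoidal structure, lies in the essential image of the fully faithful inclusion $\Op_d \hookrightarrow \Op$. Consequently, the unit map $\cO \to h_d \cO$ induces an equivalence $\Hom_{\Op}(h_d \cO, \cC) \isoto \Hom_{\Op}(\cO, \cC)$, and likewise for $\cO'$. Since $f$ is by assumption a $d$-equivalence, $h_d f$ is an equivalence in $\Op_d$ by definition, so precomposition with $h_d f$ yields an equivalence $\Hom_{\Op}(h_d \cO', \cC) \isoto \Hom_{\Op}(h_d \cO, \cC)$. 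Stitching these equivalences together identifies $\Hom_{\Op}(\cO', \cC) \isoto \Hom_{\Op}(\cO, \cC)$, which is the desired equivalence on underlying spaces of algebras.

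The main point requiring care is lifting this space-level equivalence to an equivalence of $\infty$-categories. For this I would promote the argument to the internal hom of the symmetric monoidal $\infty$-category $\Op$: the algebra $\infty$-category $\Alg_{\cO}(\cC)$ coincides with the internal hom $\Fun^{\otimes}(\cO, \cC)$, and the reflective subcategory $\Op_d \hookrightarrow \Op$ is closed under internal homs with target in $\Op_d$. The adjunction therefore enhances to a natural equivalence $\Fun^{\otimes}(\cO, \cC) \simeq \Fun^{\otimes}(h_d \cO, \cC)$ of $\infty$-categories, and the same precomposition argument with $h_d f$ produces the desired equivalence $\Alg_{\cO'}(\cC) \isoto \Alg_{\cO}(\cC)$. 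Equivalently, one can test against arbitrary $\infty$-operads $\cD$ via the tensor--hom adjunction $\Hom(\cD, \Fun^{\otimes}(\cO, \cC)) \simeq \Hom(\cO \otimes \cD, \cC)$ and reduce to the space-level statement already established.
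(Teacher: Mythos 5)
Your proposal is correct and takes essentially the same route as the paper, which justifies this observation in one line by combining \cref{obs:symmetric-monoidal-d-op} with the localization adjunction $h_d \colon \Op \rightleftarrows \Op_d$ of \cref{obs:op-fact-sys} — precisely the locality argument you spell out. Your second paragraph, upgrading the space-level equivalence $\Hom_{\Op}(\cO', \cC) \simeq \Hom_{\Op}(\cO, \cC)$ to an equivalence of algebra $\infty$-categories via the tensor--hom adjunction (noting that $\Alg_{\cD}(\cC)$ has $d$-truncated multi-hom spaces, being limits of such, and is hence again local), makes explicit a step the paper leaves implicit.
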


\section{Blakers-Massey theorem for algebras of coherent $\infty$-operads}
\label{sec:BM-for-alg}
In this section we prove  \cref{prop:op-BM}, which is a Blakers-Massey type result for algebras of coherent $\infty$-operads. 
After introducing basic concepts in cubical homotopy theory in  \cref{subsec:cubes}, we state the \cref{prop:op-BM} and give some applications in \cref{subsec:BM-for-alg}. Finally, we prove \cref{prop:op-BM} in \cref{subsec:proof-of-op-BM}.
\subsection{Recollection on cubes}\label{subsec:cubes}

Let $S$ be a finite set and $\cP(S)$ be the power set of $S$, which we view as a category with morphisms being inclusions. For $k \geq 0$, let $\cP_{\leq k}(S)$ (resp. $\cP_{\geq k}(S)$) be the full subcategory of $\cP(S)$ consisting of $I \subset S$ with $|I| \leq k$ (resp. $|I| \geq k$). 
For $n \geq 1$ we denote by $\underline{n}$ the finite set $\{1, \cdots, n\}$. We take $\underline{0}$ to be the empty set $\varnothing$.
\begin{definition}\label{def:S-cube}
   Let $S$ be a finite set and $\cC$ be an $\infty$-category. An \emph{$S$-cube} in $\cC$ is a functor $\cF \colon \cP(S) \to \cC$. 
   Given an $S$-cube  $\cF \colon \cP(S) \to \cC$, we denote by $\cF|_{\leq k}$ (resp. $\cF|_{\geq k}$) for the restriction of $\cF$ on $\cP_{\leq k}(S)$ (resp. $\cP_{\geq k}(S)$). Similarly, a \emph{punctured $S$-cube} in $\cC$ is a functor $\cF \colon \cP_{\leq |S|-1}(S) \to \cC$.

   For $d \geq -2$, we will say that an $S$-cube $\cF \colon \cP(S) \to \cC$ is \emph{$d$-coCartesian} if the colimit $\colim(\cF|_{\leq |S|-1})$ exists and the canonical map 
   \begin{equation}
    \colim(\cF|_{\leq |S|-1}) \to \cF(S)
   \end{equation}
   is $d$-connected. When $d = \infty$ we simply say that it is coCartesian.

   We will say that an $S$-cube $\cF \colon \cP(S) \to \cC$ is \emph{strongly Cartesian} if $\cF$ is the right Kan extension of $\cF_{\geq |S| - 1}$ along the inclusion $\cP_{\geq |S| -1}(S)\hookrightarrow \cP(S)$.
\end{definition}
\begin{observation}\label{obs:d-coCart-colimit}
    Let $\cC$ be an $\infty$-category and $S$ be a finite set. By \cref{obs:lim-colim-preserves}, the $d$-coCartesian squares are closed under colimits in $\Fun(\cP(S), \cC)$.
\end{observation}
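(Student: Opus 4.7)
The plan is to reduce the claim directly to \cref{obs:lim-colim-preserves}, which states that colimits in the arrow category preserve $d$-connected morphisms. Suppose we have a diagram $\{\cF_\alpha\}_{\alpha \in A}$ of $S$-cubes in $\cC$, each $d$-coCartesian, such that the colimit $\cF := \colim_\alpha \cF_\alpha$ exists in $\Fun(\cP(S), \cC)$. We must show that the canonical comparison map $\colim(\cF|_{\leq |S|-1}) \to \cF(S)$ is $d$-connected.

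First I would invoke the pointwise formula for colimits of functors: for each $T \subseteq S$ we have $\cF(T) \simeq \colim_\alpha \cF_\alpha(T)$, and these equivalences assemble into an equivalence of functors $\cP(S) \to \cC$. In particular, the restriction $\cF|_{\leq |S|-1}$ is the colimit (in $\Fun(\cP_{\leq |S|-1}(S), \cC)$) of the restrictions $\cF_\alpha|_{\leq |S|-1}$, since restriction along the inclusion $\cP_{\leq |S|-1}(S) \hookrightarrow \cP(S)$ commutes with colimits.

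Next, using the fact that colimits commute with colimits, I would identify
\begin{equation}
  \colim\bigl(\cF|_{\leq |S|-1}\bigr) \simeq \colim_\alpha \colim\bigl(\cF_\alpha|_{\leq |S|-1}\bigr),
\end{equation}
and under this identification the canonical map $\colim(\cF|_{\leq |S|-1}) \to \cF(S)$ is naturally equivalent to $\colim_\alpha$ of the canonical maps $\colim(\cF_\alpha|_{\leq |S|-1}) \to \cF_\alpha(S)$. In particular, this comparison map is the colimit, taken in $\Fun(\Delta^1, \cC)$, of the individual comparison maps associated to each $\cF_\alpha$.

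By hypothesis each of these individual maps is $d$-connected, so by \cref{obs:lim-colim-preserves} their colimit in the arrow category is also $d$-connected, which is precisely the required conclusion. There is no real obstacle here beyond bookkeeping: the content is entirely in the pointwise formula for colimits of functors, the interchange of colimits, and the already established stability of $d$-connected maps under colimits in the arrow category.
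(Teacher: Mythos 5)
Your argument is correct and is exactly the one the paper intends: the comparison maps $\colim(\cF_\alpha|_{\leq |S|-1}) \to \cF_\alpha(S)$ assemble into a diagram in $\Fun(\Delta^1, \cC)$ whose colimit (via pointwise colimits and interchange) is the comparison map for $\cF$, and then \cref{obs:lim-colim-preserves} applies. The only implicit caveat, harmless in the presentable settings where the paper uses this (e.g.\ $\Alg_{\cO}(\cC)$ for $\cC$ an $m$-topos), is that the pointwise formula and the existence of the intermediate punctured colimits require enough colimits in $\cC$.
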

\begin{example}\label{ex:0-cube}
    Let $\cC$ be an $\infty$-category.
    A $0$-cube in $\cC$ is an object of $\cC$. A $0$-cube is coCartesian if the object is an initial object, and it is always strongly Cartesian.
\end{example}

\begin{example}\label{ex:1-cube}
    A $1$-cube in $\cC$ is a morphism in $\cC$. For $d \geq -2$, 
    a cube is $d$-coCartesian if the morphism is $d$-connected,  and it is always strongly Cartesian.
\end{example}
We will often denote a $1$-cube by the corresponding morphism.
\begin{example}\label{ex:2-cube}
    A $2$-cube in $\cC$ is a commutative square 
    \begin{equation}
        \begin{tikzcd}
            A \ar[r] \ar[d] & B \ar[d]\\ 
            C \ar[r] & D
        \end{tikzcd}
    \end{equation}
    in $\cC$. For $d \geq -2$, such a square is $d$-coCartesian if the map $B \sqcup_A C \to D$ is $d$-connected. Such a square is strongly Cartesian if the map $A \to B \times_{C} D$ is an equivalence, i.e., the square is a pullback square.
\end{example}

\begin{lemma}\label{lem:d-coCart-section-lemma}
Let $\cC$ be an $m$-topos for some $-1 \leq m \leq \infty$. Suppose we have the following diagram 
\begin{equation}\label{eq:d-coCart-section-lemma-1}
    \begin{tikzcd}
        A \ar[r] \ar[d] & B \ar[d] \\ 
        C \ar[r] \ar[d] & D \ar[d] \\ 
        E \ar[r]        & F.
    \end{tikzcd}
\end{equation}
If the top square is $d$-coCartesian and the total square is $(d+1)$-coCartesian, then the bottom square is $(d+1)$-coCartesian.
\end{lemma}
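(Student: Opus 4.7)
The plan is to factor the comparison map for the total rectangle through that of the bottom square, and to reduce the lemma to a right-cancellation property for connected morphisms in $\cC$.

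By the associativity of pushouts there is a canonical equivalence $B\sqcup_A E \simeq (B\sqcup_A C)\sqcup_C E$, under which the total-square comparison map $\beta\colon B\sqcup_A E \to F$ factors as
\begin{equation*}
B\sqcup_A E \;\simeq\; (B\sqcup_A C)\sqcup_C E \xrightarrow{\phi} D\sqcup_C E \xrightarrow{\gamma} F,
\end{equation*}
where $\phi$ is the pushout along $C\to E$ of the top-square comparison map $\alpha\colon B\sqcup_A C\to D$, and $\gamma$ is precisely the bottom-square comparison map. So it suffices to show that $\gamma$ is $(d+1)$-connected.

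The top square being $d$-coCartesian says that $\alpha$ is $d$-connected; since pushouts in the arrow category preserve $d$-connected morphisms (\cref{obs:lim-colim-preserves}), $\phi$ is also $d$-connected. The total square being $(d+1)$-coCartesian says that the composite $\gamma\phi=\beta$ is $(d+1)$-connected. Hence the lemma reduces to the assertion that whenever $\phi$ is $d$-connected and $\gamma\phi$ is $(d+1)$-connected in $\cC$, the map $\gamma$ is itself $(d+1)$-connected.

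This final right-cancellation step is the main obstacle, and it is precisely where the $m$-topos hypothesis enters essentially. The plan is to realize $\cC$ as the $(m-1)$-truncated objects of an ambient $\infty$-topos $\cE$ and to analyze, above each point of $F$, the Puppe fiber sequence
\begin{equation*}
\mathrm{fib}(\phi)\;\longrightarrow\;\mathrm{fib}(\gamma\phi)\;\longrightarrow\;\mathrm{fib}(\gamma)
\end{equation*}
computed in $\cE$. The hypotheses give that $\mathrm{fib}(\phi)$ is $d$-connected and $\mathrm{fib}(\gamma\phi)$ is $(d+1)$-connected, and the long exact sequence of homotopy groups then forces $\pi_i(\mathrm{fib}(\gamma))=0$ for $i\leq d+1$, as desired. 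Equivalently, this step records the familiar fact that in an $\infty$-topos the class of $(d+1)$-connected maps is right-cancellative against the larger class of $d$-connected maps.
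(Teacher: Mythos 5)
Your proof is correct and is essentially the paper's own argument: the paper simply cites \cite[\S5.8.14(b)]{cubical}, whose proof is exactly your decomposition --- paste the pushouts to factor the total comparison map as $(B\sqcup_A C)\sqcup_C E \xrightarrow{\phi} D\sqcup_C E \xrightarrow{\gamma} F$ with $\phi$ a cobase change of the top comparison map (hence $d$-connected by \cref{obs:lim-colim-preserves}), and then apply right-cancellation of connectivity. Your final cancellation step (if $\phi$ is $d$-connected and $\gamma\phi$ is $(d+1)$-connected then $\gamma$ is $(d+1)$-connected) is indeed the standard topos-theoretic fact, and your reduction to an ambient $\infty$-topos matches how the paper itself handles $m$-topoi elsewhere (e.g.\ in \cref{prop:BM} and \cref{lem:section-d-conn}).
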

\begin{proof}
    This follows from the same argument as in \cite[\S5.8.14(b)]{cubical}.
\end{proof}

Let us give a useful way to construct new cubes:
\begin{example}\label{ex:tensor-cubes}
    Let $\cC$ be a symmetric monoidal $\infty$-category with tensor product $\otimes$. For $i = 1, 2$, let $S_i$ be a finite set and $\cF_i$ be a $S_i$-cube in $\cC$. Let $S = S_1 \sqcup S_2$, we can construct a $S$-cube $\cF_1 \square \cF_2$ as follows:
    \begin{enumerate}
        \item For $I \subset S$, we define $\cF_1\square \cF_2(I)$ to be $\cF_1(I \cap S_1) \otimes \cF_2(I \cap S_2)$.
        \item For an inclusion of subsets $I \subset J$, the map $\cF_1\square \cF_2(I) \to \cF_1\square \cF_2(J)$ is given by $\cF(I \cap S_1 \to J \cap S_1) \otimes \cF(I \cap S_2 \to J \cap S_2)$.
    \end{enumerate} 
\end{example}
Let $\cC$ be a $\infty$-category with finite products. By \cite[\S 2.4.1]{HA}, there is a essentially unique symmetric monoidal structure on $\cC$, called the \emph{Cartesian} symmetric monoidal structure, where the unit is the terminal object and the tensor product is the Cartesian product. 

\begin{lemma}\label{lem:square-strongly-coCart}
    Let $\cC$ be a symmetric monoidal $\infty$-category. For $i = 1, 2$, let $S_i$ be a finite set and $\cF_i$ be a strongly Cartesian $S_i$-cube in $\cC$.
    If the symmetric monoidal structure on $\cC$ is Cartesian, then $\cF_1 \square \cF_2$ is also strongly Cartesian.
\end{lemma}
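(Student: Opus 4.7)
The plan is to exhibit $\cF_1\square\cF_2$ as a right Kan extension by factoring it through the equivalence of posets $\cP(S)\simeq\cP(S_1)\times\cP(S_2)$ and then invoking transitivity of right Kan extensions. First I would observe that, under this equivalence, $\cF_1\square\cF_2$ factors as
\[
\cP(S) \;\simeq\; \cP(S_1)\times\cP(S_2) \xrightarrow{\cF_1\times\cF_2} \cC\times\cC \xrightarrow{\times} \cC.
\]
Setting $A_i := \cP_{\geq|S_i|-1}(S_i)$, the next step is to note that right Kan extensions into a product category are computed componentwise, so the hypothesis that each $\cF_i$ is the right Kan extension of $\cF_i|_{A_i}$ gives that $\cF_1\times\cF_2$ is the right Kan extension of its restriction to $A_1\times A_2$ along $A_1\times A_2\hookrightarrow\cP(S_1)\times\cP(S_2)$. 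Since the Cartesian product $\times\colon\cC\times\cC\to\cC$ is right adjoint to the diagonal and hence preserves limits, postcomposition with it preserves right Kan extensions, so $\cF_1\square\cF_2$ is the right Kan extension of its restriction to $A_1\times A_2\subseteq\cP(S)$.

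Next I would use transitivity of right Kan extensions along the chain $\cP_{\geq|S|-1}(S)\subseteq A_1\times A_2\subseteq\cP(S)$ to reduce the claim to showing that $(\cF_1\square\cF_2)|_{A_1\times A_2}$ is the right Kan extension of its further restriction to $\cP_{\geq|S|-1}(S)$. The elements of $A_1\times A_2\setminus\cP_{\geq|S|-1}(S)$ are exactly the sets of the form $K=S\setminus\{s,t\}$ with $s\in S_1$ and $t\in S_2$, since $|K|=|K\cap S_1|+|K\cap S_2|\geq|S|-2$ with equality forcing $|K\cap S_i|=|S_i|-1$ for both $i$. At each such $K$, the relevant slice diagram has the three objects $K\cup\{s\}$, $K\cup\{t\}$, and $S$, and the limit
\[
\bigl(\cF_1(S_1)\times\cF_2(S_2\setminus\{t\})\bigr) \times_{\cF_1(S_1)\times\cF_2(S_2)} \bigl(\cF_1(S_1\setminus\{s\})\times\cF_2(S_2)\bigr)
\]
collapses in the Cartesian setting to $\cF_1(S_1\setminus\{s\})\times\cF_2(S_2\setminus\{t\}) = (\cF_1\square\cF_2)(K)$, finishing the proof.

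The only real subtlety is the appearance of the intermediate subcategory $A_1\times A_2$, which is strictly larger than $\cP_{\geq|S|-1}(S)$ precisely because a product of codimension-one levels contains the ``corner'' subsets $S\setminus\{s,t\}$ with $s\in S_1, t\in S_2$. The Cartesian hypothesis on $\otimes$ is essential at exactly one point, namely the collapse of the fiber product at these corners into the product of the two factorwise pullbacks; for a generic symmetric monoidal structure that step would fail.
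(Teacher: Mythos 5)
Your proof is correct, but it is organized differently from the paper's. The paper verifies the pointwise right Kan extension formula directly at every $I \subseteq S$: it observes that $\cP_{\geq |S|-1}(S)$ is a pushout of $\cP_{\geq|S_1|-1}(S_1)$ and $\cP_{\geq|S_2|-1}(S_2)$ over a point, writes $\cF(I)$ as a product of two factorwise slice limits using strong Cartesianness of $\cF_1$ and $\cF_2$, reassembles this as a fiber product over $\cF_1(S_1)\times\cF_2(S_2)$, and then uses the pushout decomposition of the index poset to recognize the result as the single limit over $(\cP_{\geq|S|-1}(S))_{I/}$. You instead factor $\cF_1\square\cF_2$ as $\cP(S_1)\times\cP(S_2)\xrightarrow{\cF_1\times\cF_2}\cC\times\cC\xrightarrow{\times}\cC$, use that right Kan extensions into a product are computed componentwise and that the limit-preserving functor $\times$ preserves pointwise right Kan extensions, and then invoke transitivity along $\cP_{\geq|S|-1}(S)\subseteq A_1\times A_2\subseteq\cP(S)$, which localizes the entire computation to the corner subsets $S\setminus\{s,t\}$, where you check a single pullback collapse. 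Your route buys modularity: no limits over pushouts of index categories are needed, all formal steps are standard Kan extension facts, and the only computation is one three-term pullback; the paper's route is a single self-contained chain of equivalences valid at every $I$ simultaneously. One small inaccuracy in your closing remark: the Cartesian hypothesis is used in \emph{two} places, not one --- besides the corner collapse, your first step already needs $\otimes = \times$ to be right adjoint to the diagonal so that postcomposition preserves right Kan extensions (for a generic symmetric monoidal structure, $\otimes$ does not preserve limits and that step fails just as the corner collapse does). Since you invoked exactly this adjunction explicitly there, the proof itself is unaffected.
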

\begin{proof}
    Note that 
    the commutative diagram 
    \begin{equation}
        \begin{tikzcd}
            \pt \ar[r, "S_1"] \ar[d, "S_2"] & \cP_{\geq |S_1| - 1}(S_1) \ar[d, "- \sqcup S_2"] \\ 
            \cP_{\geq |S_2| - 1}(S_2) \ar[r, "S_1 \sqcup - "] & \cP_{\geq |S|-1}(S) 
        \end{tikzcd}
    \end{equation}
    is a pushout of $\infty$-categories.
    For any $I \subset S$, we have a sequence of equivalences:
    \begin{equation}
        \begin{aligned}
            \cF(I) & = \cF_1(I \cap S_1) \times \cF_2(I \cap S_2) \\ 
                &\simeq \lim_{J_1 \in (\cP_{\geq |S_1| - 1}(S_1))_{I \cap S_1/}} \cF_1(J_1) \times \lim_{J_2 \in (\cP_{\geq |S_2| - 1}(S_2))_{I \cap S_2/}} \cF_2(J_2) \\ 
                &\simeq \left(\lim_{J_1 \in (\cP_{\geq |S_1| - 1}(S_1))_{I \cap S_1/}} \cF_1(J_1) \times \cF_2(S_2)\right) \times_{(\cF_1(S_1) \times \cF_2(S_2))} \left(\lim_{J_2 \in (\cP_{\geq |S_2| - 1}(S_2))_{I \cap S_2/}} \cF_1(S_1) \times \cF_2(J_2)\right) \\ 
                &\simeq \left(\lim_{J_1 \in (\cP_{\geq |S_1| - 1}(S_1))_{I \cap S_1/}} \cF(J_1 \sqcup S_2)\right) \times_{\cF(S)} \left(\lim_{J_2 \in (\cP_{\geq |S_2| - 1}(S_2))_{I \cap S_2/}} \cF(S_1 \sqcup J_2)\right) \\ 
                &\simeq \lim_{J \in (\cP_{\geq |S|-1}(S))_{I/}} \cF(J)
        \end{aligned}
    \end{equation}
    It follows that $\cF$ is indeed the right Kan extension of $\cF|_{\geq |S|-1}$.
\end{proof}

\begin{definition}\label{def:boxdot}
    Given a symmetric monoidal $\infty$-category $\cC$ with pushouts and morphisms $f_i \colon X_i \to Y_i$ in $\cC$ for $i = 1,2$. We denote by $f_1 \boxdot f_2$ the map 
    \begin{equation}
        \colim(f_1 \square f_2|_{\leq 1}) =  X_1 \otimes Y_2 \sqcup_{X_1 \otimes X_2} Y_1 \otimes X_2  \to Y_1 \otimes Y_2.
    \end{equation}
\end{definition}
By definition, the square $f_1 \square f_2$ is $d$-coCartesian if and only if the map $f_1 \boxdot f_2$ is $d$-connected.

\begin{notation}\label{nota:iterative-square}
    Given $\cC$ a symmetric monoidal $\infty$-category and $S$ a finite set. Suppose for each $i \in \underline{n}$ we have a morphism $f_i \colon X_i \to Y_i$ in $\cC$. We will collectively denote the maps as $f_{\underline{n}}$. We denote by $\square_{f_{\underline{n}}}$ the $n$-cube $((f_1 \square f_2) \square \cdots) \square f_n$. In the case that $\cC$ is unital and the morphisms $f_i$ are unit maps $1_{\cC} \to Y_i$, we will denote the $n$-cube $\square_{f_{\underline{n}}}$ by $\square_{Y_{\underline{n}}}$. 
\end{notation}

The following is an iterative way to compute punctured colimits of cubes:
\begin{lemma}[{\cite[Lemma 5.7.6]{cubical}}]\label{lem:iter-punctured-colim}
    Let $\cC$ be an $\infty$-category with pushouts, $S$ be a finite set, and $s$ be an element of $S$. 
    Given a punctured $S$-cube $\cF \colon \cP_{\leq |S|-1}(S) \to \cC$, the following commutative diagram
    \begin{equation}
        \begin{tikzcd}
            \colim_{I \in \cP_{\leq |S|-2}(S - s)}\cF(I) \ar[r] \ar[d] & 
            \colim_{I \in \cP_{\leq |S|-2}(S - s)}\cF(I \sqcup \{s\}) \ar[d]\\ 
            \cF(S-s) \ar[r] & 
            \colim_{I \in \cP_{\leq |S|-1}}\cF(I) =  \colim(\cF|_{\leq |S|-1}) 
        \end{tikzcd}
       \end{equation}
       is a pushout square in $\cC$.
\end{lemma}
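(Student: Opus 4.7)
The plan is to identify $\cP_{\leq |S|-1}(S)$ itself as a pushout in $\Catinf$ and then invoke that $\colim$ commutes with pushouts in the indexing $\infty$-category. Partition the objects of $\cP_{\leq |S|-1}(S)$ according to whether they contain $s$: the full subcategory $\cI_0$ on subsets not containing $s$ is $\cP(S-s)$ (the size bound $|S|-1$ is automatic since $|S-s|=|S|-1$), while the full subcategory $\cI_1$ on subsets containing $s$ is equivalent to $\cP_{\leq |S|-2}(S-s)$ via $I \mapsto I \setminus \{s\}$. Because morphisms are inclusions that cannot drop $s$, there are no morphisms from $\cI_1$ to $\cI_0$, and a morphism $I \to K \sqcup \{s\}$ in $\cP_{\leq |S|-1}(S)$ with $I \in \cI_0$ and $K \in \cP_{\leq |S|-2}(S-s)$ exists uniquely iff $I \subseteq K$ inside $\cP(S-s)$.

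From this, the natural functors $\cP(S-s) \hookrightarrow \cP_{\leq |S|-1}(S)$ and $\cP_{\leq |S|-2}(S-s) \times \Delta^1 \to \cP_{\leq |S|-1}(S)$ (sending $(J, 0) \mapsto J$ and $(J, 1) \mapsto J \sqcup \{s\}$) agree on the common subcategory $\cP_{\leq |S|-2}(S-s)$ and together exhibit
\[
    \cP_{\leq |S|-1}(S) \;\simeq\; \cP(S-s) \sqcup_{\cP_{\leq |S|-2}(S-s)} \bigl(\cP_{\leq |S|-2}(S-s) \times \Delta^1\bigr)
\]
as a pushout in $\Catinf$. Restricting $\cF|_{\leq |S|-1}$ to the two legs and applying $\colim$, which preserves pushouts in the indexing diagram, yields a pushout square in $\cC$. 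Since $S-s$ is terminal in $\cP(S-s)$ and $\{1\}$ is terminal in $\Delta^1$, the four corners simplify to $\colim_{J \in \cP_{\leq |S|-2}(S-s)} \cF(J)$, $\colim_{J \in \cP_{\leq |S|-2}(S-s)} \cF(J \sqcup \{s\})$, $\cF(S-s)$, and $\colim(\cF|_{\leq |S|-1})$, recovering the square in the statement.

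The main obstacle is justifying the pushout description of $\cP_{\leq |S|-1}(S)$ in $\Catinf$, since pushouts of $1$-categories in $\Catinf$ can in principle introduce higher homotopy. Here, however, all three categories are nerves of posets and the gluing is along a full subcategory inclusion on one leg and along the level-$0$ inclusion $\cP_{\leq |S|-2}(S-s) \times \{0\} \hookrightarrow \cP_{\leq |S|-2}(S-s) \times \Delta^1$ on the other, so one can verify that the pushout remains the nerve of a poset—e.g.\ by unpacking the description of mapping spaces in $\Catinf$-pushouts. Alternatively, one can sidestep this step by verifying the universal property of the stated pushout square in $\cC$ directly: a cocone on $\cF|_{\leq |S|-1}$ decomposes uniquely into a cocone on $\cF|_{\cP(S-s)}$ together with a coherent family of edges $\cF(J) \to X$ for $J \in \cP_{\leq |S|-2}(S-s)$ realizing the images of the $J \sqcup \{s\}$, which is exactly the data of a cocone on the proposed pushout.
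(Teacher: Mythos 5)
Your argument is correct, but note that the paper does not actually prove this lemma: it is imported wholesale from \cite[Lemma 5.7.6]{cubical}, where Munson--Voli\'c prove it for spaces by essentially the same idea (decompose the punctured-cube poset and apply homotopy-colimit gluing plus cofinality), so your write-up is a legitimate self-contained $\infty$-categorical substitute rather than a divergence from the paper. Two refinements would make it airtight. First, the principle that $\colim$ takes a pushout decomposition of the indexing $\infty$-category to a pushout in $\cC$ is exactly the decomposition of colimits of \cite[\S 4.2.3, esp.\ Corollary 4.2.3.10]{HTT}; you should also address existence, since $\cC$ is only assumed to have pushouts --- the colimits over $\cP_{\leq |S|-2}(S-s)$ appearing in the corners exist by induction on $|S|$ via the lemma itself (they are iterated pushouts), and the decomposition then produces $\colim(\cF|_{\leq |S|-1})$ rather than presupposing it. Second, the subtlety you flag in the last paragraph dissolves completely: the comparison map from the strict simplicial-set pushout
\begin{equation}
    N\cP(S-s) \ \sqcup_{N\cP_{\leq |S|-2}(S-s) \times \{0\}} \ \bigl(N\cP_{\leq |S|-2}(S-s) \times \Delta^1\bigr) \longrightarrow N\bigl(\cP_{\leq |S|-1}(S)\bigr)
\end{equation}
is an \emph{isomorphism} of simplicial sets, not merely an equivalence: any chain of proper subsets of $S$ either avoids $s$ entirely, in which case it lies in $N\cP(S-s)$, or has a terminal segment of subsets containing $s$, in which case (using that $\cP_{\leq |S|-2}(S-s)$ is downward closed, so the initial segment automatically consists of proper subsets of $S-s$) it lifts uniquely to a simplex of the cylinder with jump at the first occurrence of $s$. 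Since both gluing legs are monomorphisms and every object is cofibrant in the Joyal model structure, this strict pushout is already the pushout in $\Catinf$, so no spurious higher homotopy can arise --- exactly as you suspected, and with no need for the alternative universal-property verification. Your cofinality step is fine as well: $\cP_{\leq |S|-2}(S-s) \times \{1\}$ is final in the cylinder because the inclusion of the terminal vertex of $\Delta^1$ is final and finality is preserved by taking the product with an identity (check comma categories via Quillen's Theorem A if you want it explicit). With these references supplied, the proof is complete.
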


We end this subsection with the following useful proposition:
\begin{proposition}\label{prop:iter-colim}
    Let $\cC$ be a symmetric monoidal $\infty$-category with small colimits and $n \geq 2$. Suppose we have $f_i \colon X_i \to Y_i$ in $\cC$ for $1 \leq i \leq n$. If the tensor product $\otimes$ preserves pushouts in each variable, i.e., the functor $- \otimes X \colon \cC \to \cC$ preserves pushouts for every objects $X \in \cC$, then the map 
    \begin{equation}\label{eq:iter-colim-eq-1}
        \colim(\square_{f_{\underline{n}}}|_{\leq n-1}) \to  \square_{f_{\underline{n}}}(\underline{n}) = \bigotimes_{i = 1}^n Y_i
    \end{equation}
    can be identified with 
    \begin{equation}
        ((f_1 \boxdot f_2) \boxdot \cdots) \boxdot f_n.
    \end{equation}
\end{proposition}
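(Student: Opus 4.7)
The plan is to induct on $n$, using \cref{lem:iter-punctured-colim} at each stage to peel off the last factor $f_n$. The base case $n = 2$ is immediate: by definition of $\square_{f_{\underline{2}}}$, the punctured colimit $\colim(\square_{f_{\underline{2}}}|_{\leq 1})$ is $X_1 \otimes Y_2 \sqcup_{X_1 \otimes X_2} Y_1 \otimes X_2$, and the canonical map to $Y_1 \otimes Y_2$ is literally $f_1 \boxdot f_2$ as defined in \cref{def:boxdot}.

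For the inductive step, set $\cG = \square_{f_{\underline{n-1}}}$ so that $\square_{f_{\underline{n}}} = \cG \square f_n$, and apply \cref{lem:iter-punctured-colim} with $S = \underline{n}$ and $s = n$. Writing out the value of $\cG \square f_n$ on subsets of $\underline{n}$, the square produced by \cref{lem:iter-punctured-colim} takes the form
\begin{equation}
\begin{tikzcd}
\colim_{I \in \cP_{\leq n-2}(\underline{n-1})} \cG(I) \otimes X_n \ar[r] \ar[d] &
\colim_{I \in \cP_{\leq n-2}(\underline{n-1})} \cG(I) \otimes Y_n \ar[d] \\
\cG(\underline{n-1}) \otimes X_n \ar[r] & \colim(\square_{f_{\underline{n}}}|_{\leq n-1}).
\end{tikzcd}
\end{equation}

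At this point the key input is the hypothesis that $- \otimes Z$ preserves pushouts for every object $Z$. By iteratively applying \cref{lem:iter-punctured-colim} to the punctured $(n-1)$-cube $\cG|_{\leq n-2}$, one sees (by induction on the dimension) that its colimit is built out of pushouts, and hence that the functors $- \otimes X_n$ and $- \otimes Y_n$ commute with forming $\colim(\cG|_{\leq n-2})$. Setting $A = \colim(\cG|_{\leq n-2})$ and $B = \cG(\underline{n-1}) = \bigotimes_{i<n} Y_i$, the square above is equivalent to
\begin{equation}
\begin{tikzcd}
A \otimes X_n \ar[r] \ar[d] & A \otimes Y_n \ar[d] \\
B \otimes X_n \ar[r] & \colim(\square_{f_{\underline{n}}}|_{\leq n-1}),
\end{tikzcd}
\end{equation}
and the canonical map $\colim(\square_{f_{\underline{n}}}|_{\leq n-1}) \to B \otimes Y_n$ coincides with $g \boxdot f_n$, where $g \colon A \to B$ is the canonical map. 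By the inductive hypothesis, $g$ is identified with $((f_1 \boxdot f_2) \boxdot \cdots ) \boxdot f_{n-1}$, and substituting this in yields the desired identification of \eqref{eq:iter-colim-eq-1} with $((f_1 \boxdot f_2) \boxdot \cdots ) \boxdot f_n$.

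The main (mild) obstacle is the commutation of $- \otimes X_n$ with the punctured colimit $\colim(\cG|_{\leq n-2})$: pushout-preservation is assumed, but one must argue that this propagates to arbitrary punctured cube colimits. I would dispatch this with a short secondary induction on $n$ using \cref{lem:iter-punctured-colim} to express each such punctured colimit as a pushout of lower-dimensional ones together with a value of $\cG$ itself.
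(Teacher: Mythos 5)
Your proposal is correct and follows essentially the same route as the paper's proof: induction on $n$, using \cref{lem:iter-punctured-colim} to peel off the last factor, commuting $-\otimes X_n$ and $-\otimes Y_n$ past the punctured colimit via pushout-preservation, and then invoking the inductive hypothesis to identify the resulting map with $((f_1 \boxdot f_2) \boxdot \cdots) \boxdot f_n$. The ``mild obstacle'' you flag — propagating pushout-preservation to punctured cube colimits by a secondary induction on \cref{lem:iter-punctured-colim} — is exactly how the paper handles it (indeed, your write-up spells this point out slightly more explicitly than the paper does).
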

\begin{proof}
   The statement is vacuously true for the base case $n = 2$. Now we induct on $n$. By \cref{lem:iter-punctured-colim}, the commutative diagram 
   \begin{equation}
    \begin{tikzcd}
        \colim_{I \in \cP_{\leq n-2}(\underline{n-1})}\square_{f_{\underline{n}}}(I) \ar[r] \ar[d] & 
        \colim_{I \in \cP_{\leq n-2}(\underline{n-1})}\square_{f_{\underline{n}}}(I \sqcup \{n\}) \ar[d]\\ 
        \square_{f_{\underline{n}}}(\underline{n-1}) \ar[r] & 
        \colim(\square_{f_{\underline{n}}}|_{\leq n-1}) 
    \end{tikzcd}
   \end{equation}
   is a pushout square. This unpacks to 
   \begin{equation}
    \begin{tikzcd}
         \colim_{I \in \cP_{\leq n-2}(\underline{n})}  (\square_{f_{\underline{n-1}}} (I) \otimes X_n) \ar[r] \ar[d] & 
        \colim_{I \in \cP_{\leq n-2}(\underline{n})}  (\square_{f_{\underline{n-1}}} (I) \otimes Y_n)  \ar[d] \\ 
        \bigotimes_{i = 1}^{n-1} Y_i \otimes X_n  = \square_{f_{\underline{n-1}}}(\underline{n-1}) \otimes X_n\ar[r] & \colim(\square_{f_{\underline{n}}}|_{\leq n-1}),
    \end{tikzcd}
   \end{equation}
   where $\square_{f_{\underline{n-1}}}$ is $((f_1 \square f_2) \square \cdots) \square f_{n-1}$.
   By \cref{lem:iter-punctured-colim}, the tensor product $\otimes$ preserves colimits of punctured cubes in each variable. It follows that 
   \begin{equation}
    \colim_{I \in \cP_{\leq n-2}(\underline{n})}  (\square_{f_{\underline{n-1}}} (I) \otimes X) \simeq (\colim_{I \in \cP_{\leq n-2}(\underline{n})}  \square_{f_{\underline{n-1}}} (I)) \otimes X = \colim(\square_{f_{\underline{n-1}}}|_{\leq n-2}) \otimes X
   \end{equation}
  for any $X \in \cC$.
  Hence the map in \eqref{eq:iter-colim-eq-1} can be identified with 
  \begin{equation}
   \left(\colim(\square_{f_{\underline{n-1}}}|_{\leq n-2}) \to \square_{f_{\underline{n-1}}}(\underline{n-1})\right) \boxdot  (X_n \to Y_n).
  \end{equation}
 Now the result follows from the fact that $\colim(\square_{f_{\underline{n-1}}}|_{\leq n-2}) \to \square_{f_{\underline{n-1}}}(\underline{n-1})$ is equivalent to $((f_1 \boxdot f_2) \boxdot \cdots) \boxdot f_{n-1}$ by induction.
\end{proof}

\subsection{Blakers-Massey for algebras}\label{subsec:BM-for-alg}
Let us recall the statement of Goodwillie's cubical formulation of the dual Blakers-Massey theorem \cite[Theorem 2.4]{Goo}:
\begin{proposition}\label{prop:BM}
    Let $\cC$ be an $m$-topos for some $-1 \leq m \leq \infty$.
    Given a finite set $S$  and  a strongly Cartesian $S$-cube $\cF \colon \cP(S) \to \cC$. For each $i \in S$, suppose that the map $\cF(S - i) \to \cF(S)$ is $d_i$-connected.
    Then $\cF$ is $(\sum_{i \in S}(d_i+2) - 2)$-coCartesian. 
\end{proposition}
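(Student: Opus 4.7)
I would induct on $n=|S|$. The cases $n\leq 1$ are tautological (the empty sum yields automatic $(-2)$-coCartesianness), and the case $n=2$ is the classical dual Blakers--Massey theorem for pullback squares, which holds in any $\infty$-topos and transfers to any $m$-topos via the reflective inclusion into an ambient $\infty$-topos (\cref{def:d-topos}).

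\textbf{Inductive step ($n\geq 3$).} Pick $s\in S$ and view $\cF$ as a morphism of strongly Cartesian $(S-s)$-cubes $\cF_0\to\cF_1$, where $\cF_0(I)=\cF(I)$ and $\cF_1(I)=\cF(I\sqcup\{s\})$ for $I\subseteq S-s$. Because $d$-connected maps are stable under pullback in an $m$-topos and every $2$-dimensional face of $\cF$ is Cartesian, one propagates the hypothesized connectivity in every direction: each face map $\cF(I)\to\cF(I\sqcup\{i\})$ with $i\notin I$ is $d_i$-connected (induct on $|S-I|$, comparing with the Cartesian $2$-face containing it). In particular the face maps of $\cF_0$ and $\cF_1$ in direction $i\in S-s$ are $d_i$-connected, so by the inductive hypothesis both subcubes are $D_1$-coCartesian, with $D_1=\sum_{i\neq s}(d_i+2)-2$. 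Now \cref{lem:iter-punctured-colim} identifies
\begin{equation*}
\colim(\cF|_{\leq n-1})\simeq \cF(S-s)\sqcup_{\colim(\cF_0|_{\leq n-2})}\colim(\cF_1|_{\leq n-2}),
\end{equation*}
so $D$-connectedness of the comparison to $\cF(S)$ is equivalent to $D$-coCartesianness of the outer square
\begin{equation*}
\begin{tikzcd}
\colim(\cF_0|_{\leq n-2}) \ar[r] \ar[d] & \cF(S-s) \ar[d] \\
\colim(\cF_1|_{\leq n-2}) \ar[r] & \cF(S)
\end{tikzcd}
\end{equation*}
where $D=D_1+d_s+2$.

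\textbf{Main obstacle.} The remaining step is the heart of the argument. My plan is to form the auxiliary pullback $X=\cF(S-s)\times_{\cF(S)}\colim(\cF_1|_{\leq n-2})$. By pullback-stability its defining Cartesian square has $d_s$-connected right vertical and $D_1$-connected bottom horizontal (the latter by the inductive hypothesis applied to $\cF_1$), so by the $n=2$ base case this Cartesian square is $D$-coCartesian; equivalently, $\cF(S-s)\sqcup_X\colim(\cF_1|_{\leq n-2})\to\cF(S)$ is $D$-connected. Using the universal property of $X$, the canonical map from the pushout $\cF(S-s)\sqcup_{\colim(\cF_0|_{\leq n-2})}\colim(\cF_1|_{\leq n-2})$ to $\cF(S)$ factors as
\begin{equation*}
\cF(S-s)\sqcup_{\colim(\cF_0|_{\leq n-2})}\colim(\cF_1|_{\leq n-2})\longrightarrow \cF(S-s)\sqcup_{X}\colim(\cF_1|_{\leq n-2})\longrightarrow \cF(S),
\end{equation*}
where the first arrow is obtained by pushing out the comparison $\phi\colon\colim(\cF_0|_{\leq n-2})\to X$ in the middle entry of the cospan and leaving the outer entries alone. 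Since colimits preserve $d$-connected morphisms (\cref{obs:lim-colim-preserves}), the first arrow is as connected as $\phi$, and a composition of two $D$-connected maps is $D$-connected. The delicate point, and the main obstacle, is establishing that $\phi$ is itself $D$-connected. My expectation is that this is handled by exploiting the strong Cartesian structure of $\cF_0$ and $\cF_1$: it identifies $\colim(\cF_0|_{\leq n-2})$ with the total cofibre of an iterated-pullback description controlled by $\cF_1$, and the resulting connectivity estimate should follow from an additional bookkeeping induction on $n$ (or equivalently from a relative version of the same Blakers--Massey estimate applied to a derived cube).
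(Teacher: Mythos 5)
Your skeleton is sound, and it is genuinely different in spirit from the paper's proof, which is purely by citation: the paper quotes \cite[Theorem 2.4]{Goo} for spaces, \cite[Corollary 3.5.2]{GenBM} for the square case in $\infty$-topoi, says the higher-dimensional case ``follows from the same argument as'' \cite[Theorem 6.2.2]{cubical}, and then adds the one-line truncation reduction for $m$-topoi. Your base case, the propagation of the connectivity hypotheses to all face maps by pullback-stability, the use of \cref{lem:iter-punctured-colim}, the square Blakers--Massey estimate for the Cartesian square defining $X$ (note $D_1+d_s+2=\sum_{i\in S}(d_i+2)-2=D$, as you computed), and the connectivity transfer through the pushout via \cref{obs:lim-colim-preserves} are all correct. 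In fact your inductive hypothesis is only ever invoked for $\cF_1$; the $D_1$-coCartesianness of $\cF_0$ plays no role in the final route.

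However, the step you yourself flag as the ``main obstacle'' --- the $D$-connectedness of $\phi\colon\colim(\cF_0|_{\leq n-2})\to X$ --- is a genuine gap as written, and the repair you sketch (a ``total cofibre'' bookkeeping induction, or a ``relative version of the same Blakers--Massey estimate'') points in the wrong direction: a relative Blakers--Massey statement is of the same depth as the theorem being proved, so that route threatens circularity. The actual fix is much simpler and makes $\phi$ an \emph{equivalence}, not merely $D$-connected. Since every rectangle of $\cF$ in the $s$-direction is Cartesian (paste the Cartesian $2$-faces, which you already used), strong Cartesianness exhibits $\cF_0$ as the pointwise pullback of $\cF_1$ along $\cF(S-s)\to\cF(S)$, i.e.\ $\cF_0(I)\simeq\cF(S-s)\times_{\cF(S)}\cF_1(I)$ naturally in $I\in\cP(S-s)$. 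Universality of colimits --- one of the Giraud axioms for $m$-topoi for all $-1\leq m\leq\infty$ \cite[Theorem 6.4.1.5]{HTT}, and in spaces precisely Mather's second cube theorem --- then gives
\[
\colim(\cF_0|_{\leq n-2})\;\simeq\;\cF(S-s)\times_{\cF(S)}\colim(\cF_1|_{\leq n-2})\;=\;X,
\]
so $\phi$ is invertible, your factorization collapses to the single square Blakers--Massey application at $X$, and the induction closes. (Your Ganea-style test case is consistent with this: the identification of the fiber of $\Sigma\Omega K\to K$ with $\Omega K * \Omega K$ is itself an instance of descent.) So the proposal is incomplete rather than wrong, but the missing ingredient is descent in the topos, not a further connectivity induction --- which is exactly the input supplied by the references the paper leans on.
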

\begin{proof}
    The case of $\cC = \Spaces$ is proven in  \cite[Theorem 2.4]{Goo}. However, it can be extended to general $\infty$-topoi: the case of $n = 2$ is proven in \cite[Corollary 3.5.2]{GenBM}, while the general case follows from the same argument as \cite[Theorem 6.2.2]{cubical}. 
    The case of $m$-topoi follows from the fact that any $m$-topos is the full subcategory of $(m-1)$-truncated objects in some $\infty$-topos, and the truncation functor preserves connectedness.
\end{proof}

We are interested in proving a version of \cref{prop:BM} for $\cO$-algebras  and for cubes of the form $\square_{f_{S}}$ defined in \cref{nota:iterative-square}. 
There is a property on the $\infty$-operad, called \emph{coherent}, which we now discuss.

In \cite[Definition 3.1.1.9]{HA}, a technical condition on reduced $\infty$-operads called \emph{coherence} is introduced. It is a technical condition that guarantees a nice theory of $\cO$-module categories. 
In this paper, we only need to the following two results from \cite{HA} about coherent $\infty$-operads. The first is that many of the $\infty$-operads we are interested in are coherent:
\begin{proposition}[{\cite[Theorem 5.1.1.1]{HA}}]\label{prop:Ek-coherent}
    For $k \geq 0$, the little cubes $\infty$-operad $\EE_k$ is coherent.
\end{proposition}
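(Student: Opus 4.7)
The plan is to verify the defining conditions of coherence from \cite[Definition 3.1.1.9]{HA} directly for $\EE_k$, using its explicit description as the operad of little $k$-cubes. Coherence of a reduced unital $\infty$-operad $\cO$ requires, beyond unitality and reducedness of $\cO$, a technical condition stating that for every semi-inert morphism $f \colon X \to Y$ in $\cO^{\otimes}$, a certain square of extension $\infty$-operads is a pushout; this property governs how freely one can adjoin new colors or operations to $\cO$-algebras.

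The first step is to observe that $\EE_k$ is tautologically unital and reduced: there is a unique color (the standard little $k$-cube) and the $0$-ary operation space, parametrizing empty configurations of rectilinear embeddings $\varnothing \hookrightarrow (0,1)^k$, is a point. The content of the theorem thus lies entirely in verifying the technical coherence condition. My approach is to translate the abstract statement into a geometric one: a semi-inert morphism in $\EE_k^{\otimes}$ corresponds to a configuration of disjoint little $k$-cubes in $(0,1)^k$ together with a distinguished subset of ``newly added'' cubes, and the coherence condition reduces to showing that the forgetful maps between such configuration spaces (forgetting the designation of new cubes, or adjoining yet further disjoint cubes) are Serre fibrations whose fibers have the appropriate connectivity. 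These geometric claims follow from standard Fadell--Neuwirth-type results about configuration spaces of points in $\RR^k$, together with the observation that the space of ways to insert an additional little $k$-cube into an existing configuration is an open subset of $(0,1)^k$ whose connected components are each convex, hence contractible.

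The principal obstacle is establishing the dictionary between the abstract simplicial-categorical formulation of coherence (in terms of $p$-coCartesian lifts, the inert/active factorization system on $\cO^{\otimes}$, and pushouts in $\Op$) and the concrete geometric picture of configuration spaces. Setting this translation up carefully -- keeping track of basepoints in $\Finstar$, the distinction between active and inert morphisms, and the correct notion of $\infty$-operadic pushout -- is where the bulk of the work lies. Once the dictionary is in place, the required lifting and pushout properties reduce to convexity arguments about complements of finite configurations in $(0,1)^k$. An induction on the cardinality of the underlying pointed finite set, using unitality of $\EE_k$ to reduce to the case of adjoining a single new operation at each step, should then complete the verification.
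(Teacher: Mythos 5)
You should first note that the paper contains no argument for this statement at all: it is imported verbatim from \cite[Theorem 5.1.1.1]{HA}, so the relevant comparison is with Lurie's proof. Measured against that, your outline names the right objects (extension spaces of configurations of little cubes, Fadell--Neuwirth fibrations) but the two steps carrying the actual mathematical content are wrong. First, coherence is not a connectivity condition. In the formulation used in the proof of \cite[Theorem 5.1.1.1]{HA}, it demands that certain squares of extension spaces --- spaces of semi-inert extensions of an active morphism by one additional cube --- be homotopy \emph{pushouts}. No Serre-fibration-plus-fiber-connectivity statement can encode this: already for $\EE_1$ the relevant fibers are complements of finite subsets of an interval, hence highly disconnected, and yet $\EE_1$ is coherent.

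Second, the convexity claim fails for every $k \geq 2$. The space of ways to insert an additional little cube disjoint from a given configuration of $n$ cubes in $(0,1)^k$ is homotopy equivalent to the complement of $n$ points in $\RR^k$: for $k \geq 2$ this is connected but homotopy equivalent to a wedge of $n$ copies of $S^{k-1}$, so its single component is neither convex nor contractible. This is not a repairable detail --- if the insertion spaces had contractible components, the extension squares would degenerate and the argument would ``prove'' coherence for essentially any reduced unital $\infty$-operad presented by such configuration data, whereas the condition has genuine content (for instance $\AA_k$ with $1 < k < \infty$ fails it, see \cref{rem:Ak-not-coherent}). What Lurie actually proves is a Mayer--Vietoris-type statement: for a rectilinear embedding and a configuration of cubes, the square whose corners are the spaces of positions of the extra cube (inside the image of the embedding, away from the configuration, their overlap, and all positions) is homotopy coCartesian, established by an explicit deformation argument in the lemmas preceding \cite[Theorem 5.1.1.1]{HA}; the nontrivial topology of the complements is precisely what that pushout organizes, rather than something contractibility lets you ignore. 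Your dictionary between semi-inert morphisms and marked configurations is a reasonable starting point, but the reduction you propose after it would not succeed as stated.
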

\begin{remark}\label{rem:Ak-not-coherent}
    For $1 < k < \infty$, the $\infty$-operad $\AA_k$ is \emph{not} coherent. 
\end{remark}
Let $\cC$ be a symmetric monoidal $\infty$-category and $\cO$ be an $\infty$-operad. By \cite[Proposition 3.2.4.3]{HA}, there exists a symmetric monoidal structure on $\Alg_{\cO}(\cC)$, such that for any color $X \in \cO$, the evaluation functor $\mathrm{ev}_{X} \colon \Alg_{\cO}(\cC) \to \cC$ is symmetric monoidal. 
Furthermore, by \cite[Lemma 3.2.5]{EHA}, the symmetric monoidal structure on $\Alg_{\cO}(\cC)$ is Cartesian if the one on $\cC$ is such.
From now on we will always equip $\Alg_{\cO}(\cC)$ with its symmetric monoidal structure.

The second result on coherent $\infty$-operad allows us to apply \cref{prop:iter-colim} to the $\infty$-category of $\cO$-algebras.

\begin{proposition}[{\cite[Corollary 5.3.1.16]{HA}}]\label{prop:O-alg-preserves-pushout} 
   Let $\cO$ be a coherent $\infty$-operad and $\cC$ be a presentably symmetric monoidal $\infty$-category, that is, a presentable symmetric monoidal $\infty$-category whose tensor product preserves small colimits in each variables. 
   Let $K$ be a small simplicial set that is weakly contractible. 
   Then the tensor product on $\Alg_{\cO}(\cC)$ preserves $K$-indexed colimits in each variable. In particular it preserves pushouts in each variable.
\end{proposition}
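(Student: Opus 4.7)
The strategy is to combine two ingredients that together handle the weakly contractible case: (i) a standard preservation-of-sifted-colimits argument that does not use coherence, and (ii) an appeal to coherence to handle pushouts. Any weakly contractible-indexed colimit can be built from sifted colimits together with pushouts, so controlling these two families of colimits should suffice.

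For sifted colimits, the key input is that the symmetric monoidal structure on $\Alg_{\cO}(\cC)$ is uniquely characterized by the requirement that every evaluation functor $\mathrm{ev}_X \colon \Alg_{\cO}(\cC) \to \cC$ is symmetric monoidal (HA 3.2.4.3, recalled just above the proposition). The family $\{\mathrm{ev}_X\}_{X \in \cO}$ is jointly conservative, and each $\mathrm{ev}_X$ preserves sifted colimits by the standard fact that forgetful functors out of algebras over an $\infty$-operad preserve sifted colimits; this does not require coherence. Combined with the presentably symmetric monoidal hypothesis on $\cC$ (so that $\otimes_\cC$ preserves sifted colimits in each variable), a straightforward diagram chase through the evaluation functors shows that $\otimes$ on $\Alg_{\cO}(\cC)$ also preserves sifted colimits in each variable.

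For pushouts, coherence becomes essential. The key fact, which is the technical heart of HA Section 3.4 on $\cO$-module categories, is that for a coherent $\infty$-operad $\cO$, a pushout $A \sqcup_C B$ in $\Alg_{\cO}(\cC)$ can be described explicitly as a bar-type relative tensor product, i.e.\ as a geometric realization of an explicit simplicial object built from $A$, $B$, $C$, and the operations of $\cO$. Because geometric realizations are sifted, the preservation of pushouts in each variable reduces to the preservation of sifted colimits already established. Assembling the two cases gives preservation of $K$-indexed colimits for every weakly contractible $K$.

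The main obstacle is therefore establishing the bar-type formula for pushouts of $\cO$-algebras and showing that the required simplicial object is well-defined and has the expected universal property. This is precisely the step that fails without coherence: for non-coherent operads such as the $\AA_k$ with $1 < k < \infty$ (see \cref{rem:Ak-not-coherent}), the operadic bar construction does not organize itself into such a resolution, and pushouts genuinely fail to be preserved by $\otimes$ in each variable. This motivates the refined, arity-sensitive machinery developed in the remainder of the paper.
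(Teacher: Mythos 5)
First, note what you are comparing against: the paper does not prove this proposition at all --- it is imported wholesale from \cite[Corollary 5.3.1.16]{HA}, and the statement's only role here is to license \cref{prop:iter-colim} for $\Alg_{\cO}(\cC)$. So your proposal is an attempt to reprove Lurie's result. Its outer skeleton is sound: the reduction of weakly contractible colimits to sifted colimits plus pushouts is a standard (and correct) decomposition, and your sifted-colimit argument --- joint conservativity of the evaluation functors $\mathrm{ev}_X$, preservation of sifted colimits by the forgetful functors (no coherence needed), and the pointwise formula for $\otimes$ from \cite[Proposition 3.2.4.3]{HA} --- is complete and correct.

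The genuine gap is the pushout step, which is exactly where all the content lives. Your claim that for coherent $\cO$ a pushout $A \sqcup_C B$ in $\Alg_{\cO}(\cC)$ ``is'' a bar-type relative tensor product is not a theorem of \cite{HA}'s module theory, and as stated it is false at this generality: already for $\cO = \EE_1$ the pushout is an amalgamated free product, not $A \otimes_C B$ (for $\EE_\infty$, where the relative tensor product formula does hold, no coherence-type input is needed at all, since $\otimes$ is the coproduct by \cite[Proposition 3.2.4.7]{HA}). Moreover, even granting some simplicial resolution of $A \sqcup_C B$, your reduction to the sifted case is circular: to commute $D \otimes -$ past the realization you must also commute it past the colimits used to build the simplicial terms, and if those terms involve coproducts of algebras this is precisely what fails --- the remark immediately following the proposition in the paper records that $\otimes$ on $\Alg_{\cO}(\cC)$ does \emph{not} preserve coproducts in general. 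Coherence enters Lurie's argument through the machinery of $\cO$-module categories (base change, the identification of algebras under $A$ with algebras in $A$-modules), not through an explicit bar formula for pushouts, and nothing in your proposal substitutes for that input. Finally, your closing assertion that pushout-preservation ``genuinely fails'' for $\AA_k$ with $1 < k < \infty$ is unsupported: \cref{rem:Ak-not-coherent} only says these operads are not coherent, so that the theorem does not apply; the paper nowhere claims the conclusion fails for them.
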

\begin{remark}
    The symmetric monoidal structure on $\Alg_{\cO}(\cC)$ generally does not preserve all small colimits. In particular, it generally does not preserve coproducts. 
\end{remark}

We are now ready to state our Blakers-Massey type statement:
\begin{proposition}\label{prop:op-BM}
    Fix $d \geq -2$ and $-1 \leq m \leq \infty$.
    Let $\cO$ be a coherent multi-homwise $d$-connected $\infty$-operad and $\cC$ be an $m$-topos equipped with the Cartesian symmetric monoidal structure.
    Let $S$ be a finite set and $f_S$ be a collection of $d_i$-connected morphisms $\{f_i \colon X_i \to Y_i\}_{i \in S}$ in $\Alg_{\cO}(\cC)$. 
    Let $\square_{f_S}$ be the $S$-cube $\cP(S) \to \Alg_{\cO}(\cC)$ defined in \cref{nota:iterative-square}. Then  $\square_{f_S}$ is $$\sum_{i \in S} (d_i + 2) + (n-1)(d+2) - 2$$-coCartesian. 
    That is, 
    the canonical map 
    \begin{equation}\label{eq:punctured-morphism}
     \colim(\square_{f_S}|_{\leq n-1}) \to \square_{f_S}(S) =  \prod_{i \in S} Y_i
    \end{equation}
    is $
    \left(\sum_{i \in S} (d_i + 2) + (n-1)(d+2) - 2\right)$-connected.
 \end{proposition}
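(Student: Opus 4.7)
The plan is a two-step reduction: first identify the canonical map with an iterated pushout--product, then induct on $n = |S|$ to reduce to the two-variable base case, which is itself a Blakers--Massey-type estimate for $\Alg_\cO(\cC)$ equipped with its Cartesian symmetric monoidal structure.

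For the first step, \cref{prop:iter-colim} identifies
\begin{equation}
\colim(\square_{f_S}|_{\leq n - 1}) \to \square_{f_S}(S) = \prod_{i \in S} Y_i
\end{equation}
with the iterated pushout--product $((f_1 \boxdot f_2) \boxdot \cdots) \boxdot f_n$. The hypothesis that $\otimes$ preserves pushouts in each variable on $\Alg_\cO(\cC)$ is supplied by \cref{prop:O-alg-preserves-pushout}, which is the sole place where coherence of $\cO$ intervenes.

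For the second step, I induct on $n$. The case $n = 1$ is immediate, the bound collapsing to $d_1$. For $n \geq 2$, setting $g := f_1 \boxdot \cdots \boxdot f_{n-1}$, the inductive hypothesis says that $g$ is $D_{n-1}$-connected, where $D_{n-1} = \sum_{i = 1}^{n-1}(d_i + 2) + (n-2)(d+2) - 2$. A direct computation gives $D_n - D_{n-1} = d_n + d + 4$, so the entire proposition reduces to the two-variable estimate: for any $d_1'$-connected morphism $u$ and $d_2'$-connected morphism $v$ in $\Alg_\cO(\cC)$, the map $u \boxdot v$ is $(d_1' + d_2' + d + 4)$-connected.

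The heart of the proof, and the main obstacle, is this two-variable base case. Since $\cC$ is Cartesian, $\Alg_\cO(\cC)$ inherits a Cartesian symmetric monoidal structure (\cite[Lemma 3.2.5]{EHA}), so \cref{lem:square-strongly-coCart} shows that $\square_{u, v}$ is a strongly Cartesian $2$-cube. Applying \cref{prop:BM} colorwise in the $m$-topos $\cC$ already yields the weaker bound $d_1' + d_2' + 2$ for the $\cC$-level pushout--product map into $Y_1 \otimes Y_2 \simeq Y_1 \times Y_2$; the task is to beat this by exactly $(d + 2)$ using the multi-homwise $d$-connectivity of $\cO$. The strategy I would pursue is to realize the pushout computed in $\Alg_\cO(\cC)$ as a sifted (bar-style) colimit of iterated free $\cO$-algebras built over the $\cC$-level pushout, then use \cref{obs:d-coCart-colimit} together with the $d$-connectivity of the operation spaces $\Mul_\cO(k; -)$ to extract an additional $(d + 2)$ of connectivity from the comparison. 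Coherence of $\cO$ is what guarantees that such a sifted colimit interacts well with the tensor structure, so that the multi-homwise connectivity translates into a single $(d + 2)$-improvement per application of $\boxdot$, matching the $(n-1)(d + 2)$ in the overall bound.
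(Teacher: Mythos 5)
Your reduction steps are exactly the paper's: the identification of the punctured-cube map with the iterated pushout--product via \cref{prop:O-alg-preserves-pushout} and \cref{prop:iter-colim}, and the induction on $n$ with the arithmetic $D_n - D_{n-1} = d_n + d + 4$, reproduce the paper's proof verbatim. But the two-variable estimate --- that $u \boxdot v$ is $(d_1' + d_2' + d + 4)$-connected --- is, as you yourself say, the heart of the matter, and here your proposal has a genuine gap: you offer only a strategy (``realize the pushout as a sifted bar-style colimit of free $\cO$-algebras and extract an additional $(d+2)$ of connectivity''), and the extraction step is precisely the statement to be proven, with no mechanism given for how the connectivity of the spaces $\Mul_{\cO}(k;-)$ produces the improvement. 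The paper's actual argument for this base case (\cref{prop:op-BM-base-case-general-di}) is quite different: first the case $d_1 = d_2 = -2$ (\cref{prop:op-BM-base-case-only-d}), where one uses a section of $f_1 \boxdot f_2$ to reduce, via $(d-\onehalf)$-connectivity and truncation, to the $d$-truncated world where $\cO \to \EE_\infty$ is a $d$-equivalence and products agree with coproducts in $\Alg_{\EE_\infty}$; then a bootstrap in $d_1$ (and symmetrically $d_2$) using the \Cech{} nerve of $f_1$ (\cref{prop:cech-prop}): the levelwise maps $\eta^n$ are $d_1$-connected by pullback-stability (\cref{prop:O-alg-stable-under-pullback}), their sections $s^n$ are $(d_1-1)$-connected (\cref{lem:section-d-conn}), and the three-square lemma (\cref{lem:d-coCart-section-lemma}) together with colimit-stability of $d$-coCartesian squares (\cref{obs:d-coCart-colimit}) runs the induction. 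Nothing in your sketch supplies a substitute for this descent-and-section mechanism.

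Two further concrete errors. First, you invoke coherence in the base case (``coherence of $\cO$ is what guarantees that such a sifted colimit interacts well with the tensor structure''), contradicting your own earlier correct remark that \cref{prop:O-alg-preserves-pushout} is the sole use of coherence; indeed the paper proves the two-variable estimate for reduced, \emph{not necessarily coherent}, multi-homwise $d$-connected $\infty$-operads, and this matters downstream, since \cref{prop:k-wise-d-conn-and-d-eq} and the treatment of the non-coherent operads $\AA_k$ rely on exactly this generality. Second, your claim that applying \cref{prop:BM} ``colorwise in the $m$-topos $\cC$'' already yields the weaker bound $d_1' + d_2' + 2$ does not make sense as stated: the source of the map $\colim(\square_{u,v}|_{\leq 1}) \to Y_1 \times Y_2$ is a pushout computed in $\Alg_{\cO}(\cC)$, and the forgetful functor to $\cC$ preserves limits and sifted colimits but not pushouts, so the underlying object is not the $\cC$-level pushout and \cref{prop:BM} in $\cC$ does not apply directly. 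Connectivity of morphisms in $\Alg_{\cO}(\cC)$ is indeed detected underlying, but the comparison of the two pushouts is part of what must be controlled, not an input.
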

 \begin{remark}
     Note that the colimit in \eqref{eq:punctured-morphism} is taken in the $\infty$-category $\Alg_{\cO}(\cC)$ rather than in $\cC$.
 \end{remark}
 We will prove \cref{prop:op-BM} in \cref{subsec:proof-of-op-BM}.
\begin{example}\label{ex:E0-BM}
    Let $\cO$ be $\EE_0$, which is multi-homwise $(-2)$-connective. Then the result of \cref{prop:op-BM} is a special case of \cref{prop:BM}.
 \end{example}
 \begin{example}\label{ex:Einf-BM}
     Let $\cO$ be the commutative operad $\EE_\infty$, which is multi-homwise $\infty$-connective. Given an $\infty$-topos $\cC$ with objects $X, Y$ in $\CAlg(\cC)  = \Alg_{\EE_\infty}(\cC)$, \cref{prop:op-BM} implies that $X \sqcup Y \to X \times Y$ is an equivalence. This is a special case of \cite[Proposition 3.2.4.7]{HA}.
 \end{example}

\begin{example}\label{ex:BM-and-EHA}
    Let $\cO$ be a reduced coherent $\infty$-operad. Let $\cC$ be an $\infty$-topos $\cC$ with two objects $X, Y \in \Alg_{\cO}(\cC)$. Let $f_1$ and $f_2$ be the unit maps $1_{\cC} \to X$ and  $1_{\cC} \to Y$, which are in general $(-2)$-connected. By \cref{prop:op-BM}, the map 
    \begin{equation}
        X \sqcup Y \to X \times Y
    \end{equation}
    is $d$-connected. This is proven in \cite[Proposition 5.1.3]{EHA}, in fact without the coherent hypothesis on $\cO$. Our proof builds on this base case.
\end{example}

\begin{remark}
    The cubes considered in \cref{prop:op-BM} are strongly coCartesian by \cref{lem:square-strongly-coCart}.
    Note that result of \cref{prop:op-BM} can \emph{not} be extended to general strongly Cartesian functors. For example, if we take $\cO = \EE_\infty$, we are essentially asking for pullback and pushouts to coincide for $\CAlg(\cC)$, which fails as $\CAlg(\cC)$ is not stable. 
\end{remark}

Let us end this subsection with a generalization of \cref{prop:op-BM}:
\begin{corollary}\label{cor:op-BM-k}
    Fix $d \geq -2$ and $-1 \leq m \leq \infty$.
    Let $\cO$ be a coherent multi-homwise $d$-connected $\infty$-operad and $\cC$ be an $m$-topos equipped with the Cartesian symmetric monoidal structure.
    Let $S$ be a finite set and $f_S$ be a collection of $d_i$-connected morphisms $\{f_i \colon X_i \to Y_i\}_{i \in S}$ in $\Alg_{\cO}(\cC)$. 
    Let $\square_{f_S}$ be the functor $\cP(S) \to \cC$ defined in \cref{nota:iterative-square}. For $1 \leq k < n$, the map 
    \begin{equation}
        \colim (\square_{f_S}|_{\leq k}) \to \square_{f_S}(S) = \prod_{i \in S} Y_i
    \end{equation}
    is 
    \begin{equation}\label{eq:minimum}
        \min\left(\sum_{i \in I}(d_i + 2) + k(d+2)-2\right)
    \end{equation}
    -connected. Here we take the minimum over all $I \subset S$ with $|I| = k+1$.
\end{corollary}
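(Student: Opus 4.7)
The plan is to reduce to \cref{prop:op-BM} by interpolating between $\colim(\square_{f_S}|_{\leq k})$ and $\square_{f_S}(S)$ through the skeletal filtration $\cP_{\leq k}(S) \subset \cP_{\leq k+1}(S) \subset \cdots \subset \cP_{\leq n}(S) = \cP(S)$, and bounding each inclusion step via \cref{prop:op-BM} applied to sub-cubes.

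Concretely, the map in question factors as
\begin{equation}
    \colim(\square_{f_S}|_{\leq k}) \to \colim(\square_{f_S}|_{\leq k+1}) \to \cdots \to \colim(\square_{f_S}|_{\leq n-1}) \to \square_{f_S}(S),
\end{equation}
where the final map is handled by \cref{prop:op-BM}. For each intermediate step $\colim(\square_{f_S}|_{\leq j}) \to \colim(\square_{f_S}|_{\leq j+1})$, I would enumerate the $(j+1)$-element subsets $I_1, \ldots, I_m$ of $S$ and consider the further refinement $\cQ_l := \cP_{\leq j}(S) \cup \cP(I_1) \cup \cdots \cup \cP(I_l)$. The key combinatorial point is that $\cP(I_l) \cap \cQ_{l-1} = \cP_{\leq j}(I_l)$, since every proper subset of $I_l$ has size at most $j$ and therefore already lies in $\cP_{\leq j}(S) \subseteq \cQ_{l-1}$. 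So passing from $\cQ_{l-1}$ to $\cQ_l$ attaches the single new vertex $I_l$, giving a pushout square
\begin{equation}
\begin{tikzcd}
\colim(\square_{f_{I_l}}|_{\leq j}) \ar[r] \ar[d] & \colim(\square_{f_S}|_{\cQ_{l-1}}) \ar[d] \\
\square_{f_S}(I_l) \ar[r] & \colim(\square_{f_S}|_{\cQ_l}).
\end{tikzcd}
\end{equation}
Applying \cref{prop:op-BM} to the $I_l$-cube $\square_{f_{I_l}}$ (of dimension $j+1$), the left vertical map is $\bigl(\sum_{i \in I_l}(d_i+2) + j(d+2) - 2\bigr)$-connected, hence so is the right vertical map by \cref{obs:lim-colim-preserves}.

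Composing all these bounds along the filtration, the connectivity of $\colim(\square_{f_S}|_{\leq k}) \to \square_{f_S}(S)$ is at least
\begin{equation}
    \min_{k \leq j \leq n-1,\; |I| = j+1} \Bigl(\sum_{i \in I}(d_i + 2) + j(d+2) - 2\Bigr).
\end{equation}
It remains to check that this minimum is achieved at $j = k$: given any $I'$ with $|I'| = j+1 \geq k+1$, picking $I \subseteq I'$ with $|I| = k+1$ gives $\sum_{i \in I'}(d_i+2) \geq \sum_{i \in I}(d_i+2)$ since each $d_i + 2 \geq 0$, and the extra $(j-k)(d+2) \geq 0$ since $d \geq -2$. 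The main subtlety is the combinatorial bookkeeping for the $\cQ_l$ filtration, but once the observation $\cP(I_l) \cap \cQ_{l-1} = \cP_{\leq j}(I_l)$ is in place, the pushout formula and the inductive bound on connectivity follow formally, and no further input beyond \cref{prop:op-BM} is needed.
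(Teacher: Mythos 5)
Your proof is correct and takes essentially the same route as the paper: the paper also interpolates through the skeletal filtration $\cP_{\leq k}(S) \subset \cP_{\leq k+1}(S) \subset \cdots$ and reduces each step to \cref{prop:op-BM} applied to the $(j+1)$-element sub-cubes, merely packaging your one-cell-at-a-time pushouts along the $\cQ_l$ into a single left Kan extension $\LKan_{j,j+1}\square \to \square|_{\leq j+1}$ whose pointwise connectivity is transported through colimits by \cref{obs:lim-colim-preserves}, together with the same monotonicity check that the minimum is attained at $j = k$. The one elision you share with the paper's own proof is that the restriction of $\square_{f_S}$ to $\cP(I_l)$ is $\square_{f_{I_l}} \times \prod_{i \notin I_l} X_i$ rather than $\square_{f_{I_l}}$ itself; this is harmless because coherence lets the product commute with the punctured colimit (\cref{prop:O-alg-preserves-pushout}) and connectivity is stable under this base change (\cref{prop:O-alg-stable-under-pullback}).
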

\begin{proof}
    We recover \cref{prop:op-BM} as the $k = n-1$ case. Now we induct down on $k$. To simplify the notation let us write $\square_{f_S}$ as $\square$.
    We have a sequence of maps 
    \begin{equation}
        \colim (\square|_{\leq k}) \to \colim (\square|_{\leq (k+1)}) \to \cdots \to \colim (\square|_{\leq n}) = F(S).
    \end{equation}
    By induction, it suffices to show that the first map $\colim (\square|_{\leq k}) \to \colim (\square|_{\leq k+1})$ is
    $$\min_{I \subset S, |I| = K=1}\left(\sum_{i \in I}(d_i + 2) + k(d-2) - 2\right)$$-connected, as the connectivity of the other morphisms are greater.

    Let $\LKan_{k, k+1}\square$ be the left Kan extension of $\square|_{\leq k}$ along the inclusion $\cP_{\leq k}(S) \hookrightarrow \cP_{\leq (k+1)}(S)$. Consider the canonical natural transform $\LKan_{k, k+1}\square \to \square_{\leq (k+1)}$. By construction, this is an equivalence when restricted to $\cP_{\leq k}(S)$. For $I \subset S$ of cardinality $k+1$, we have that
    \begin{equation}
        \LKan_{k, k+1}\square(I) = \colim( \square_{f_I}|_{\leq k})
    \end{equation} is the colimit of $\square_{f_I}$ over the punctured cube $\cP_{\leq k}(I)$. Here $f_I$ is the restriction of $f_S$ to the subset $I$. 
    By \cref{prop:op-BM}, the map 
    \begin{equation}
        \LKan_{k, k+1}\square(I) = \colim \square_{f_I}|_{\leq k} \to 
        \square_{f_I}(I) = 
        \square|_{\leq (k+1)}(I)
    \end{equation}
    is $\left(\sum_{i \in I}(d_i + 2) + k(d-2) - 2\right)$-connected. Therefore the natural transform  $\LKan_{k, k+1}\square \to \square_{\leq (k+1)}$ is pointwise 
    $$\min_{I \subset S, |I| = k+1}\left(\sum_{i \in I}(d_i + 2) + k(d-2) - 2\right)$$-connected. 
    By \cref{obs:lim-colim-preserves}, the class of $k$-connected morphisms is closed under small colimits. It follows that the colimit 
    \begin{equation}
       \colim(\square|_{\leq k}) \simeq \colim(\LKan_{k, k+1}\square) \to \colim(\square|_{\leq (k+1)})
    \end{equation}
    is also 
    $$\min_{I \subset S, |I| = k+1}\left(\sum_{i \in I}(d_i + 2) + k(d-2) - 2\right)$$-connected.
\end{proof}

\subsection{Proof of \cref{prop:op-BM}}\label{subsec:proof-of-op-BM}
This subsection is devoted to the proof of \cref{prop:op-BM}.
First we prove the case for $S = \underline{2}$ and no connectivity assumptions on $f_i$.
This is a generalization of \cite[Proposition 5.1.3]{EHA}:
\begin{proposition}\label{prop:op-BM-base-case-only-d}
    Fix $d \geq -2$ and $-1 \leq m \leq \infty$.
   Let $\cO$ be a reduced (not necessarily coherent) multi-homwise $d$-connected $\infty$-operad and $\cC$ be an $m$-topos equipped with its Cartesian symmetric monoidal structure.  Given a pair of morphisms $f_i \colon X_i \to Y_i$ in $\Alg_{\cO}(\cC)$, 
    the square $f_1 \square f_2$ 
    is $d$-coCartesian.
\end{proposition}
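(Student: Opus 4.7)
The plan is to compare the Cartesian product square $f_1 \square f_2$ with the analogous coproduct square, and reduce to the base case recorded in \cref{ex:BM-and-EHA} (i.e., \cite[Proposition 5.1.3]{EHA}), which states that for any pair of $\cO$-algebras $X, Y$ the canonical map $X \sqcup Y \to X \times Y$ is $d$-connected in $\Alg_\cO(\cC)$---crucially, without the coherence hypothesis on $\cO$. Writing $\cD := \Alg_\cO(\cC)$, let $\cF = f_1 \square f_2 \colon \cP(\underline{2}) \to \cD$ be the product square, which sends $I \subseteq \underline{2}$ to $\prod_{i \in \underline{2}} Z_i^{(I)}$, with $Z_i^{(I)} := Y_i$ for $i \in I$ and $Z_i^{(I)} := X_i$ otherwise. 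Define a parallel coproduct square $\cG \colon \cP(\underline{2}) \to \cD$ by $\cG(I) := \coprod_{i \in \underline{2}} Z_i^{(I)}$, and let $\alpha \colon \cG \to \cF$ be the natural transformation whose components are the canonical maps from coproduct to product. By the base case, each $\alpha_I$ is $d$-connected.

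The next step is to show that $\cG$ itself is strictly coCartesian. This follows from the fact that coproducts preserve colimits in each variable: a universal-property computation yields $\colim(\cG|_{\leq 1}) = (Y_1 \sqcup X_2) \sqcup_{X_1 \sqcup X_2} (X_1 \sqcup Y_2) \isoto Y_1 \sqcup Y_2 = \cG(\underline{2})$, since both sides represent $\Hom(Y_1, -) \times \Hom(Y_2, -)$. By \cref{obs:lim-colim-preserves}, the pointwise $d$-connected transformation $\alpha|_{\leq 1}$ then induces a $d$-connected morphism $\colim(\cG|_{\leq 1}) \to \colim(\cF|_{\leq 1})$ on pushouts of punctured squares. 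By the universal property, this fits into a commuting triangle with the two other edges $\colim(\cG|_{\leq 1}) \isoto \cG(\underline{2}) \xrightarrow{\alpha_{\underline{2}}} \cF(\underline{2})$ and $\colim(\cG|_{\leq 1}) \to \colim(\cF|_{\leq 1}) \to \cF(\underline{2})$, so the latter composite is $d$-connected as well.

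The proof concludes via a two-out-of-three principle: in the presentable $\infty$-category $\cD$, the factorization system of \cref{obs:presentable-d-fact} implies that if $f$ and $gf$ are both $d$-connected, then so is $g$. Indeed, factor $g = g_R g_L$ with $g_L$ being $d$-connected and $g_R$ being $d$-truncated; then $gf = g_R (g_L f)$ is another $(d\text{-connected}, d\text{-truncated})$ factorization of the $d$-connected morphism $gf$, forcing $g_R$ to be an equivalence by uniqueness. Applying this with $f \colon \colim(\cG|_{\leq 1}) \to \colim(\cF|_{\leq 1})$ and $g \colon \colim(\cF|_{\leq 1}) \to \cF(\underline{2})$ yields the desired $d$-connectedness of the comparison map, establishing $d$-coCartesianness of $f_1 \square f_2$. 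I expect the principal subtlety to be conceptual rather than computational: recognizing that \cite[Proposition 5.1.3]{EHA} already handles arbitrary algebras (not merely unit maps) means the entire proof reduces to the formal comparison between coproduct and product squares, with no further case analysis on the individual morphisms $f_1$ and $f_2$.
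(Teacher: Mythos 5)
Your proof is correct, but it takes a genuinely different route from the paper's. The paper generalizes the \emph{proof} of \cite[Proposition 5.1.3]{EHA} rather than its statement: it uses \cite[Lemma 5.1.1]{EHA} to produce a section of $f_1 \boxdot f_2$, invokes \cite[Proposition 4.3.5]{EHA} to reduce $d$-connectedness to $(d-\onehalf)$-connectedness, commutes $\tau_{\leq d}$ past products and colimits to land in the $(d+1)$-category $\cC_{\leq d}$, and then uses the $d$-equivalence $\cO \to \EE_\infty$ together with \cite[Proposition 3.2.4.7]{HA} to reduce to $\cO = \EE_\infty$, where coproducts and products of algebras agree. You instead black-box the unit-map case (recalled in \cref{ex:BM-and-EHA}) and bootstrap to arbitrary $f_1, f_2$ by a purely formal comparison of the product square with the coproduct square: your verification that $(Y_1 \sqcup X_2) \sqcup_{X_1 \sqcup X_2} (X_1 \sqcup Y_2) \to Y_1 \sqcup Y_2$ is an equivalence is right, the pointwise $d$-connected transformation $\cG \to \cF$ does induce a $d$-connected map on punctured colimits by \cref{obs:lim-colim-preserves}, and your right-cancellation argument (via uniqueness of factorizations in the ($d$-connected, $d$-truncated) system of \cref{obs:presentable-d-fact}, using that $\Alg_{\cO}(\cC)$ is presentable by \cite[Corollary 3.2.3.5]{HA}) is a standard and valid property of the left class. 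Your route is more modular --- it quarantines all topos-theoretic input in the cited base case, needs no section or truncation argument, and makes transparent that passing from unit maps to arbitrary maps is formal. Two small debts it incurs: you should confirm that \cite[Proposition 5.1.3]{EHA} is available in the $m$-topos generality (the paper's \cref{ex:BM-and-EHA} recalls it only for $\infty$-topoi; the $m$-topos case is either already in \cite{EHA}, whose treatment of connectedness works in $m$-topoi, or follows by the truncation reduction used in the paper's proof of \cref{prop:BM}), and you implicitly use that $\Alg_{\cO}(\cC)$ is pointed (the trivial algebra is initial by reducedness of $\cO$ and terminal by Cartesianness of $\cC$) so that the comparison $X \sqcup Y \to X \times Y$ is natural in the pair. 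By contrast, the paper's proof skeleton (section plus truncation plus reduction to $\EE_\infty$) is reused verbatim later, e.g.\ in the proof of \cref{prop:k-wise-d-conn-and-d-eq}, which is something your packaging would not directly supply.
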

\begin{proof}
    The statement is vacuous for $d = -2$. From now on we assume that $\cO$ is $d$-connected for $d \geq -1$. We are going to show that that $f_1 \boxdot f_2$, i.e. 
    \begin{equation}
        X_1 \times Y_2 \sqcup_{X_1 \times X_2} Y_1 \times X_2 \to Y_1 \times Y_2
    \end{equation}
    is $d$-connected. 
    By \cite[Lemma 5.1.1]{EHA}, the composite 
    \begin{equation}
        X_1 \sqcup Y_2 \to X_1 \times Y_2 \sqcup_{X_1 \times X_2} Y_1 \times X_2 \to Y_1 \times Y_2
    \end{equation}
    has a section; therefore $f_1 \boxdot f_2$ has a section too.
    By \cite[Proposition 4.3.5]{EHA}, it suffices to show that $f_1 \boxdot f_2$ is $(d-\onehalf)$-connected. As the truncation functor $\tau_{\leq d}$ commutes with products and colimits, we see that 
    \begin{equation}
        \tau_{\leq d} (f_1 \boxdot f_2) \simeq (\tau_{\leq d} f_1 \boxdot \tau_{\leq d} f_2).
    \end{equation}
    Hence it suffices to work within the $(d+1)$-category $\cC_{\leq d}$.
    Since the map $\cO \to \EE_\infty$ is a $d$-equivalence, by \cref{obs:d-equivalence-d-cat} we have an equivalence $\Alg_{\cO}(\cC_{\leq d}) \simeq \Alg_{\EE_\infty}(\cC_{\leq d})$. 
    It follows that we are reduced to the case that $\cO = \EE_\infty$. By \cite[Proposition 3.2.4.7]{HA}, the Cartesian product in $\Alg_{\EE_\infty}(\cC)$ can be identified with the categorical coproduct, 
    and the map 
    \begin{equation}
        (X_1 \sqcup Y_2) \sqcup_{X_1 \sqcup X_2} (Y_1 \sqcup X_2) \to Y_1 \sqcup Y_2
    \end{equation}
    is an isomorphism.
\end{proof}
Our next goal is to add the connectivity conditions back on $f_i$.
First we have to adapt the theory of Čech covers, as developed in \cite[\S 6]{HTT}, to our setting.
\begin{proposition}\label{prop:cech-prop}
    Fix $d \geq -2$ and $-1 \leq m \leq \infty$.
    Let $\cO$ be an $\infty$-operad whose underlying $\infty$-category is contractible and $\cC$ be an $m$-topos equipped with the Cartesian symmetric monoidal structure. Given a morphism $f \colon X \to Y$ in $\Alg_{\cO}(\cC)$, the following holds:
    \begin{enumerate}
        \item There exists a simplicial object $F^{\bullet}$ in $\Alg_{\cO}(\cC)$ whose $n$-th term is the iterated fiber product $X \times_{Y} X \cdots \times_{Y} X$ where $Y$ appears $n$ many times.
        \item Let $U \colon \Delta \to \Delta$ be the functor taking $[n]$ to $[n] \sqcup \{\infty\}$ and $F'^{\bullet}$ be the composition $\Deltaop \xrightarrow{U} \Deltaop \xrightarrow{F^\bullet} \cC$. Then the geometric realization of $F'^{\bullet}$ is $X$.
        \item There exists a natural transform $\eta^{\bullet} \colon F'^\bullet \to F^\bullet$. For each $n \in \mbbN$, the map $\eta^n$ has a section $s^n \colon F^{n} \to F'^{n}$.\footnote{Note that the sectios do not form a natural transform as they do not commute with $d_{n-1}$'s.}
        \item If the map $f \colon X \to Y$ is $(-1)$-connected, then the geometric realization of $F^{\bullet}$ is equivalent to $Y$. Furthermore, the geometric realization of $\eta^{\bullet}$ can be identified with $f \colon X \to Y$.
    \end{enumerate}
\end{proposition}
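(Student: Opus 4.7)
The plan is to realize $F^\bullet$ as the Čech nerve of $f$ inside $\Alg_{\cO}(\cC)$, to analyze the shifted simplicial object $F'^\bullet$ via a décalage / extra-degeneracy argument, and finally to transfer the Čech cover theorem from the $m$-topos $\cC$ to $\Alg_{\cO}(\cC)$. For (1), I would use that the forgetful functor $\Alg_{\cO}(\cC) \to \cC$ creates all limits: since the $m$-topos $\cC$ admits pullbacks, so does $\Alg_{\cO}(\cC)$, and one can form the Čech nerve $F^\bullet$ of $f$ in $\Alg_{\cO}(\cC)$, whose $n$-th term has the required underlying iterated fiber product.

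For (2) and (3), the key observation is that the endofunctor $U$ of $\Delta$ fits into two natural transformations of endofunctors: the pointwise inclusion $\id \Rightarrow U$ given by $[n] \hookrightarrow [n] \sqcup \{\infty\}$, and a map $\mathrm{const}_{[0]} \Rightarrow U$ picking out the maximal element $\infty$. Applying $F^\bullet$, which is contravariant on $\Delta$, these yield respectively $\eta^\bullet \colon F'^\bullet \to F^\bullet$ (as in (3)) and a natural map $F'^\bullet \to \mathrm{const}_X$; the latter exhibits $X$ as the colimit of $F'^\bullet$ by the standard extra-degeneracy argument, which proves (2). The section $s^n$ in (3) is the degeneracy $F^n \to F^{n+1} = F'^n$ duplicating the last factor; it splits $\eta^n$ by a simplicial identity.

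For (4), I would invoke the Čech cover theorem in $\infty$-topoi and its truncated counterpart for $m$-topoi: a morphism is $(-1)$-connected precisely when it is an effective epimorphism, i.e., when the geometric realization of its Čech nerve recovers the target. To transfer this from $\cC$ to $\Alg_{\cO}(\cC)$, I would use that under the hypotheses (contractible underlying $\infty$-category of $\cO$ and Cartesian symmetric monoidal structure on $\cC$), the forgetful functor $\Alg_{\cO}(\cC) \to \cC$ creates sifted, and in particular $\Deltaop$-indexed, colimits, so both $(-1)$-connectedness and geometric realization may be checked on underlying objects in $\cC$. The identification $|\eta^\bullet| \simeq f$ is then automatic from the construction. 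The main obstacle I expect lies precisely in this last transfer: making rigorous the passage of the Čech realization statement from $\cC$ to $\cO$-algebras. This rests on the interplay between sifted colimits and finite products in an $m$-topos, a feature that distinguishes $m$-topoi from general presentable symmetric monoidal $\infty$-categories and which is crucial when working between $\cC$ and its $\infty$-category of $\cO$-algebras.
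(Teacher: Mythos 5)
Your proposal is correct and follows essentially the same route as the paper: the Čech nerve is formed in $\Alg_{\cO}(\cC)$ via limits created by the forgetful functor, parts (2) and (3) come from the same maps $\id \Rightarrow U$ and $[n] \sqcup \{\infty\} \to [n]$ in $\Delta$ together with the split/extra-degeneracy argument (the paper cites \cite[Lemma 6.1.3.17]{HTT}), and part (4) is the topos-level Čech statement for $\cO = \Triv$ transferred along the forgetful functor, which preserves limits and detects and preserves geometric realizations (\cite[Corollary 3.2.2.4, Proposition 3.2.3.1]{HA}). The one point you assert without citation --- that $(-1)$-connectedness can be checked on underlying objects --- is exactly \cite[Proposition 4.4.5]{EHA}, which the paper invokes for this purpose in neighboring results.
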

\begin{proof}
    Note that $\Alg_{\cO}(\cC)$ has small limits as it is presentable (\cite[Corollary 3.2.3.5]{HA}). 
    The construction of $F^\bullet$ is given in \cite[Proposition 6.1.2.11]{HTT} as a right Kan extension. This proves part (1). Part (2) follows from \cite[Lemma 6.1.3.17]{HTT}.

    The natural tranform $\eta^\bullet$ comes from the natural transform $\id \to U$ on $\Delta$ given by the inclusion of $[n]$ to $[n] \sqcup \{\infty\}$.
    For each $n$, the section $s^n$ comes from the map $[n] \sqcup \{\infty \} \to [n]$ given by identity on $[n]$ and taking $\{\infty\}$ to the maximal element $n$ of $[n]$. This completes the proof of part (3).

    It remains to prove part (4). 
    First let us consider the case $\cO = \Triv$ where $\Alg_{\cO}(\cC) \simeq \cC$. For $\infty$-topoi, part (4) follows from \cite[Theorem 6.1.0.6(3.iv)]{HTT}. For $m$-topoi with $m < \infty$, part (4) follows from \cite[Theorem 6.4.1.5(6.iv)]{HTT}. 

    For a general $\infty$-operad $\cO$ whose underlying $\infty$-category is contractible, part (4) follows from the fact that the forgetful map $\fgt \colon \Alg_{\cO}(\cC) \to \cC$ preserves limits (\cite[Corollary 3.2.2.4]{HA}), and that it detects and preserves geometric realization (\cite[Proposition 3.2.3.1]{HA}).
    \end{proof}
We also need the following two lemmas:
\begin{proposition}\label{prop:O-alg-stable-under-pullback}
    Fix $d \geq -2$ and $-1 \leq m \leq \infty$.
    Let $\cO$ be a reduced $\infty$-operad and $\cC$ be an $m$-topos equipped with the Cartesian symmetric monoidal structure. The class of $d$-connected morphisms is preserved under pullbacks. Explicitly, for any pullback square 
    \begin{equation}
        \begin{tikzcd}
            B \times_{D} C \ar[r] \ar[d, "f'"] & B \ar[d, "f"]\\ 
            C \ar[r] & D
        \end{tikzcd}
    \end{equation}
    in $\Alg_{\cO}(\cC)$, 
    if $f$ is $d$-connected, then $f'$ is also $d$-connected.
\end{proposition}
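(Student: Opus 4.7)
The plan is to reduce pullback-stability in $\Alg_\cO(\cC)$ to the standard pullback-stability of $d$-connected morphisms in the $m$-topos $\cC$ (see e.g.\ \cite[\S 6.5.1]{HTT}), using the forgetful functor $\fgt \colon \Alg_\cO(\cC) \to \cC$ as the bridge. Since $\cO$ is reduced and hence has contractible underlying $\infty$-category, $\fgt$ is conservative, preserves all limits by \cite[Corollary 3.2.2.4]{HA}, and preserves geometric realizations (indeed all sifted colimits) by \cite[Proposition 3.2.3.1]{HA}. In particular, pullbacks, diagonals, and Čech nerves in $\Alg_\cO(\cC)$ are all computed underlying in $\cC$, so that $\fgt(f')$ is the pullback of $\fgt(f)$ along $\fgt(g)$ in $\cC$.

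The heart of the proof is the auxiliary claim that a morphism $h$ in $\Alg_\cO(\cC)$ is $d$-connected if and only if $\fgt(h)$ is $d$-connected in $\cC$, which I would prove by induction on $d \geq -2$. The base case $d=-2$ is conservativity of $\fgt$. For $d = -1$, I would use the Čech nerve construction of \cref{prop:cech-prop}: a morphism is an effective epimorphism precisely when the geometric realization of its Čech nerve recovers the target, and since both Čech nerves (limits) and geometric realizations (sifted colimits) are preserved and reflected by $\fgt$, this condition is detected at the underlying level. For the inductive step $d \geq 0$, I would invoke the diagonal characterization valid in the $m$-topos $\cC$---namely that $h$ is $d$-connected iff $h$ is $(-1)$-connected and its diagonal $\Delta_h$ is $(d-1)$-connected---and transfer it across $\fgt$ using that $\fgt$ commutes with diagonals.

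Granting the claim, the proposition follows immediately. If $f$ is $d$-connected in $\Alg_\cO(\cC)$, then $\fgt(f)$ is $d$-connected in $\cC$ by the claim; its pullback $\fgt(f') \simeq \fgt(g)^*\fgt(f)$ is then $d$-connected by the standard stability of $d$-connected maps under pullback in the $m$-topos $\cC$; finally, applying the claim once more lifts this back to $f'$ being $d$-connected in $\Alg_\cO(\cC)$.

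The main obstacle is the inductive step of the auxiliary claim, namely lifting the diagonal characterization of $d$-connectedness from $\cC$ to $\Alg_\cO(\cC)$, since the latter is not itself an $m$-topos and does not carry the diagonal characterization on the nose. The resolution is to combine the base-case Čech nerve argument (handling $(-1)$-connectedness through $\fgt$) with the fact that diagonals are limits (hence preserved and reflected by $\fgt$), running the induction on the $\cC$-side and transporting the conclusion back through the conservativity of $\fgt$.
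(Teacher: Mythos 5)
Your global architecture coincides with the paper's own proof: since $\cO$ is reduced, the forgetful functor $\fgt \colon \Alg_{\cO}(\cC) \to \cC$ preserves limits, so the pullback is computed underlying in $\cC$; $d$-connected morphisms in the $m$-topos $\cC$ are stable under pullback (the paper cites \cite[Proposition 4.8]{gep17} for this; your HTT reference is fine modulo the index shift of \cref{warning:d-conn-d-1-conn}); and the entire weight rests on the auxiliary claim that $\fgt$ preserves and detects $d$-connected morphisms. The paper does not prove that claim --- it cites it as \cite[Proposition 4.4.5]{EHA} --- whereas you attempt a self-contained induction, and that is where there is a genuine gap.

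The problem is that $d$-connectedness in $\Alg_{\cO}(\cC)$ is \emph{defined} by the left lifting property against $d$-truncated morphisms \emph{of algebras} (\cref{def:d-connected-for-general-cat}), and your induction needs, at every stage, an intrinsic characterization of that lifting property inside $\Alg_{\cO}(\cC)$: at $d = -1$, that a map whose \v{C}ech nerve realizes to the target lifts against monomorphisms of algebras, and at $d \geq 0$, the diagonal characterization ($(-1)$-connected with $(d-1)$-connected diagonal implies $d$-connected). Both of these are theorems about topoi, proved using descent, and $\Alg_{\cO}(\cC)$ is presentable but not an $m$-topos, so neither transfers formally --- you acknowledge this, but your proposed resolution does not close the gap. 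Conservativity of $\fgt$, together with the fact that diagonals and \v{C}ech nerves are computed underlying, only tells you that $\fgt(h)$ satisfies the $\cC$-side characterization; it produces no lifts against $d$-truncated morphisms of algebras, since that condition is quantified over $\Alg_{\cO}(\cC)$ rather than $\cC$. As written the induction is circular: it presumes the detection statement (or its characterization-level equivalent in $\Alg_{\cO}(\cC)$) that it is meant to establish. (A minor slip besides: the base case $d = -2$ is vacuous, as every morphism is $(-2)$-connected; conservativity is rather the detection of equivalences.) A workable repair is to first establish the \emph{preservation} direction --- that $\fgt$ carries $d$-connected maps of algebras to $d$-connected maps of $\cC$, which is not formal since $\fgt$ is a right adjoint and is the real content of \cite[Proposition 4.4.5]{EHA} --- after which detection follows formally: factor $h = p \circ i$ using the factorization system of \cref{obs:presentable-d-fact}, note that $\fgt$ preserves $d$-truncated maps (being a right adjoint), apply right cancellation for the left class in $\cC$ to conclude $\fgt(p)$ is both $d$-connected and $d$-truncated, hence an equivalence, and only then invoke conservativity to see $p$ is an equivalence. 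Alternatively, simply cite \cite[Proposition 4.4.5]{EHA} as the paper does.
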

\begin{proof}
    Let $\fgt \colon \Alg_{\cO}(\cC) \to \cC$ be the forgetful functor.
    By \cite[Proposition 4.4.5]{EHA}, a morphism in $\Alg_{\cO}(\cC)$ is $d$-connected if and only if the underlying map in $\cC$ is $d$-connected. Furthermore, since the forgetful functor preserves limits, we are reduced to showing that the class of $d$-connected morphisms in $\cC$ is preserved under pullbacks. This is the result of \cite[Proposition 4.8]{gep17}.
\end{proof}
\begin{lemma}\label{lem:section-d-conn}
    Fix $d \geq -2$ and $-1 \leq m \leq \infty$.
    Let $\cO$ be a reduced $\infty$-operad and $\cC$ be an $m$-topos equipped with the Cartesian symmetric monoidal structure. Suppose we have composable maps $A \xrightarrow{f} B \xrightarrow{g} A$ where the composite $g \circ f$ is equivalent to $\id_A$. Then $g$ is $(d+1)$-connected if and only if $f$ is $d$-connected.
\end{lemma}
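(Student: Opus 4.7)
The plan is to reduce the question to the underlying $m$-topos $\cC$ and then identify each fibre of $f$ with a based path space inside a fibre of $g$, after which both implications will follow from the standard loops-versus-connectivity dictionary.

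First I would invoke \cite[Proposition 4.4.5]{EHA} (already used in the proof of \cref{prop:O-alg-stable-under-pullback}) to pass to $\cC$ itself: the forgetful functor $\Alg_{\cO}(\cC)\to\cC$ preserves and detects $d$-connectedness and obviously preserves compositions, so the retract $g\circ f\simeq \id_A$ descends. Hence it suffices to treat the case $\cO = \Triv$. Working in $\cC$, for any point $b\colon \ast\to B$ and writing $a = g(b)$, I would form the fibre-of-a-composite pullback applied to $A\xrightarrow{f} B\xrightarrow{g} A$, which produces a fibre sequence
\[
\mathrm{fib}_b(f)\to \mathrm{fib}_a(g\circ f)\to \mathrm{fib}_a(g).
\]
Since $g\circ f\simeq \id_A$ the middle term is contractible, so
\[
\mathrm{fib}_b(f)\simeq \ast \times_{\mathrm{fib}_a(g)} \ast,
\]
where the two maps from $\ast$ are $f(a)$ and $b$. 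In other words, $\mathrm{fib}_b(f)$ is the space of paths from $f(a)$ to $b$ inside $\mathrm{fib}_a(g)$.

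With this identification both implications are clean. If $g$ is $(d+1)$-connected, then each $\mathrm{fib}_a(g)$ is $(d+1)$-connected and its path space between any two points is $d$-connected, so $f$ is $d$-connected. Conversely, suppose $f$ is $d$-connected. The case $d=-2$ reduces to $g$ being $(-1)$-connected, which is automatic since $g$ admits the section $f$. For $d\geq -1$, non-emptiness of $\mathrm{fib}_b(f)$ forces every $b\in \mathrm{fib}_a(g)$ to lie in the same path component as $f(a)$, so $\mathrm{fib}_a(g)$ is path-connected; moreover $\Omega_{f(a)}\mathrm{fib}_a(g)\simeq \mathrm{fib}_{f(a)}(f)$ is $d$-connected, and together these give that $\mathrm{fib}_a(g)$ is $(d+1)$-connected.

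The main technical points are the fibre-of-composite identification and the dictionary equating $(d+1)$-connectedness of an object $X$ with path-connectedness plus $d$-connectedness of loop spaces of $X$. Both are standard in $\infty$-topoi (cf.~\cite[\S 6]{HTT}) and descend to $m$-topoi via \cref{def:d-topos}; no substantially new idea is required beyond transporting these facts through the truncation.
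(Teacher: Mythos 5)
Your first step coincides exactly with the paper's: both reduce to the underlying $m$-topos via \cite[Proposition 4.4.5]{EHA}. From there the paper simply embeds $\cC$ into an $\infty$-topos and cites the statement as well known (\cite[Proposition 4.10.6]{skd}), whereas you try to prove that known statement by hand, so the question is whether your fibrewise proof is actually valid at that level of generality.

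As written it is valid only when $\cC$ has enough points, e.g.\ $\cC = \Spaces$. In a general $m$-topos, $d$-connectedness of a morphism is defined by the lifting property of \cref{def:d-connected-for-general-cat} (equivalently via the factorization system of \cref{obs:presentable-d-fact}), and it is \emph{not} detected by fibers over global points $b \colon \ast \to B$: a nontrivial topos may admit very few maps out of the terminal object (sheaves on a pointless locale are the standard example), so ``every $\mathrm{fib}_b(f)$ is $d$-connected'' can hold vacuously without $f$ being $d$-connected. Your fibre-of-a-composite identification and the loops-versus-connectivity dictionary are fine; quantifying only over global points of $B$ is the gap. The repair is routine and lands you back on the fact the paper cites: slice over $A$. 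Since $g \circ f \simeq \id_A$, the map $f$ is precisely a global section $1 \to \tilde{B}$ of the object $\tilde{B} = (g \colon B \to A)$ of $\cC_{/A}$, and $d$-connectedness is invariant under passing to the slice (for $g$ as well as for $f$). The lemma is therefore equivalent to the assertion that a pointed object $X$ of an $m$-topos is $(d+1)$-connected if and only if its basepoint $1 \to X$ is $d$-connected. This is exactly the well-known statement behind \cite[Proposition 4.10.6]{skd}, and its proof is the internal form of your argument: the self-pullback $1 \times_X 1 \simeq \Omega X$ plays the role of your path space, but the connectivity bookkeeping is carried out with generalized points (base change along arbitrary $Z \to X$) or directly with the ($d$-connected, $d$-truncated) factorization system, never with global points. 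With that internalization your proposal becomes correct and is essentially the paper's proof with the citation unwound.
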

\begin{proof}
    Once again by \cite[Proposition 4.4.5]{EHA} we are reduced to checking this in $\cC$. As any $m$-topos can be embedded into an $\infty$-topos, so we are reduced to considering the case where $\cC$ is an $\infty$-topos, where it is well-known. See for example \cite[Proposition 4.10.6]{skd}.
\end{proof}

Next we consider the case of $S = \underline{2}$ with connectivity assumptions on $f_i$.
\begin{proposition}\label{prop:op-BM-base-case-general-di}
    Fix $d \geq -2$ and $-1 \leq m \leq \infty$.
    Let $\cO$ be a reduced (not necessarily coherent) multi-homwise $d$-connected $\infty$-operad and $\cC$ an $m$-topos equipped with the Cartesian symmetric monoidal structure.
    For $i \in \{1,2\}$, let $f_i \colon X_i \to Y_i$ be a $d_i$-connected morphism in $\Alg_{\cO}(\cC)$. Then the square 
    $f_1 \square f_2$ is $\left((d_1 + 2) + (d_2+ 2) + d\right)$-connected.
\end{proposition}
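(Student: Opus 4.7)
I will proceed by induction on the nonnegative integer $N = (d_1 + 2) + (d_2 + 2)$. The base case $N = 0$ is $d_1 = d_2 = -2$, and the desired $d$-coCartesianness is furnished by \cref{prop:op-BM-base-case-only-d} with no hypothesis on the $f_i$.

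For the inductive step, by the symmetry of the pushout-product I may assume $d_1 \geq -1$. Then $f_1$ is in particular $(-1)$-connected, so \cref{prop:cech-prop} supplies a Čech nerve $F^\bullet$ of $f_1$ in $\Alg_\cO(\cC)$ with $|F^\bullet| \simeq Y_1$, a shifted simplicial object $F'^\bullet$ with $|F'^\bullet| \simeq X_1$, a natural transformation $\eta^\bullet \colon F'^\bullet \to F^\bullet$ whose realization is $f_1$, and, for each $n$, a section $s^n \colon F^n \to F'^n$ with $\eta^n s^n = \id_{F^n}$. Each $\eta^n$ is a pullback of $f_1$, hence $d_1$-connected by \cref{prop:O-alg-stable-under-pullback}, and \cref{lem:section-d-conn} then yields that each $s^n$ is $(d_1 - 1)$-connected.

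The core of the inductive step is to analyze, for each $n$, the vertical stack of squares
\[
\begin{tikzcd}
F^n \times X_2 \ar[r, "\id \times f_2"] \ar[d, "s^n \times \id"] & F^n \times Y_2 \ar[d, "s^n \times \id"] \\
F'^n \times X_2 \ar[r, "\id \times f_2"] \ar[d, "\eta^n \times \id"] & F'^n \times Y_2 \ar[d, "\eta^n \times \id"] \\
F^n \times X_2 \ar[r, "\id \times f_2"] & F^n \times Y_2
\end{tikzcd}
\]
whose total square equals $\id_{F^n} \square f_2$ and is therefore $\infty$-coCartesian. The top square is $s^n \square f_2$; since $s^n$ is $(d_1-1)$-connected and $f_2$ is $d_2$-connected, and the inductive quantity $((d_1-1)+2)+(d_2+2) = N-1 < N$ is strictly smaller, the inductive hypothesis ensures that the top square is $(k-1)$-coCartesian, where $k = (d_1+2)+(d_2+2)+d$. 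Applying \cref{lem:d-coCart-section-lemma} then forces the bottom square $\eta^n \square f_2$ to be $k$-coCartesian.

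Finally, I would identify $f_1 \boxdot f_2$ with the geometric realization $|\eta^\bullet \boxdot f_2|$ in $\Alg_\cO(\cC)$: pushouts commute with colimits in general, and Cartesian products commute with geometric realizations in $\Alg_\cO(\cC)$ by passing through the forgetful functor to $\cC$, which is conservative, preserves limits, and preserves geometric realizations by \cite[Proposition 3.2.3.1]{HA}, combined with the fact that finite products commute with sifted colimits in the $m$-topos $\cC$. The conclusion then follows from \cref{obs:lim-colim-preserves}. The main obstacle is precisely this last commutation step, which cannot be deduced from \cref{prop:O-alg-preserves-pushout} since $\cO$ is not assumed coherent, and must instead be bootstrapped through the underlying $m$-topos $\cC$.
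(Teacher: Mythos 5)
Your proposal is correct and takes essentially the same route as the paper's proof: the Čech resolution of $f_1$ from \cref{prop:cech-prop}, the connectivity estimates for $\eta^n$ and its section $s^n$ via \cref{prop:O-alg-stable-under-pullback} and \cref{lem:section-d-conn}, the three-layer stacked-square argument through \cref{lem:d-coCart-section-lemma} against the $\infty$-coCartesian total square $\id \square f_2$, and closure of $d$-coCartesian squares under geometric realization. The only deviations are organizational: you run a single strong induction on $(d_1+2)+(d_2+2)$ where the paper inducts first on $d_1$ with $d_2 = -2$ and then on $d_2$, your uniform induction absorbs the $d = -2$ case that the paper instead delegates to the classical Blakers--Massey theorem (\cref{prop:BM} together with \cref{lem:square-strongly-coCart}), and you make explicit the commutation of Cartesian products with geometric realizations in $\Alg_{\cO}(\cC)$ (via the forgetful functor to $\cC$) that the paper uses implicitly when realizing $\eta^\bullet \square f_2$ to $f_1 \square f_2$.
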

\begin{proof}
    Suppose that $d = -2$, then $\cO = \EE_0$ and the result follows from the Blakers-Massey theorem \cref{prop:BM}. Note that the square is strongly Cartesian by \cref{lem:square-strongly-coCart}.
    From now on we will assume that $\cO$ is $d$-connected for $d \geq -1$.

    We induct on $d_1$ while fixing $d_2 =-2$. Note that the base case of $d_1 = d_2 = -2$ is proven in \cref{prop:op-BM-base-case-only-d}. 
    By \cref{prop:cech-prop}, we have a natural transform of simplicial objects  $\eta^{\bullet} \colon F'^{\bullet} \to F^{\bullet}$ in $\Alg_{\cO}(\cC)$ whose geometric realization is $f_1 \colon X_1 \to Y_1$. Hence 
    $\eta^{\bullet} \square f_2$ gives a simplicial diagram of squares 
    \begin{equation}\label{eq:cech-squares}
        \begin{tikzcd}
            F'^n \times X_2 \ar[r] \ar[d,  "\eta^n"]& F'^n \times Y_2 \ar[d, "\eta^n"] \\ 
            F^n \times X_2 \ar[r] & F^n \times Y_2
        \end{tikzcd}
    \end{equation}
    whose geometric realization recovers $f_1 \square f_2$. Since $d$-coCartesian squares are stable under colimits (\cref{obs:d-coCart-colimit}), it suffices to show that for each $n$, the square \eqref{eq:cech-squares} is $((d_1 + 2) + d)$-coCartesian.
    Consider the following commutative diagram:
    \begin{equation}
        \begin{tikzcd}
            F^n \times X_2 \ar[r] \ar[d, "s^n"] & F^n \times Y_2 \ar[d, "s^n"]\\
            F'^n \times X_2 \ar[r] \ar[d,  "\eta^n"]& F'^n \times Y_2 \ar[d, "\eta^n"] \\ 
            F^n \times X_2 \ar[r] & F^n \times Y_2
        \end{tikzcd}
    \end{equation}
    where $s^n$ is the section of $\eta^n$ given in \cref{prop:cech-prop}. The total square is $\id \square f_2$, which is $\infty$-coCartesian. By \cref{lem:d-coCart-section-lemma}, to show that the bottom square $\eta^n \square f_2$ is $((d_1 + 2) + d)$-connective, it suffices to show that the top square $s^n \square f_2$ is $((d_1 + 1) + d)$-connective. Being a pullback of $f_1$, the map $\eta^n$ is also $d_1$-connected by \cref{prop:O-alg-stable-under-pullback}.
    Being a section of $\eta^n$, the map $s^n$ is $(d_1 -1)$-connected by \cref{lem:section-d-conn}. By induction, it follows that $s^n \square f_2$ is $((d_1 + 1) + d)$-connective. This completes the induction on $d_1$. To complete the proof we fix $d_1$ and use the same argument on $d_2$.
\end{proof}
\begin{remark}
    Up to this point we have not assumed that $\cO$ is coherent. The coherence condition is needed for the inducting on the dimension of the cubes. 
\end{remark}
Finally we are ready to prove \cref{prop:op-BM}:
\begin{proof}[Proof of \cref{prop:op-BM}]
    Pick an isomorphism $S \simeq \underline{n}$. The base case of $n = 2$ is proven in \cref{prop:op-BM-base-case-general-di}. Now we induct on $n$. Since $\cO$ is coherent, by \cref{prop:O-alg-preserves-pushout} and \cref{prop:iter-colim}, the map
    $$\colim (\cF_{f_{\underline{n}}}|_{\leq k}) \to \cF_{f_{\underline{n}}}(\underline{n}) = \prod_{i=1}^n Y_i$$
is equivalent to $$((f_1 \boxdot f_2) \boxdot \cdots) \boxdot f_n.$$
By induction, the map $((f_1 \boxdot f_2) \boxdot \cdots) \boxdot f_{n-1}$ is 
$$
\left(\sum_{i =1}^{n-1} (d_i + 2) + (n-1)(d+2) - 2\right)$$
-connected. 
By \cref{prop:op-BM-base-case-general-di}, it follows that $((f_1 \boxdot f_2) \boxdot \cdots) \boxdot f_n$ is 
\begin{equation}
    \left(\sum_{i =1}^{n-1} (d_i + 2) + (n-1)(d+2) - 2 + 2\right) + (d_n + 2) +  d = \sum_{i =1}^{n}  (d_i + 2) + n(d+2) - 2
\end{equation}
-connected.
\end{proof}

\section{Generalized Eckmann-Hilton argument}\label{sec:gen-EHA}
In this section we prove  \cref{thm:main-thm}, which is a Eckmann-Hilton type argument that incorporates arities of $\infty$-operads. After reviewing the theory of unital $k$-restricted $\infty$-operads developed as \cite{dubey2024unital} in  \cref{subsec:arity}, we study the notion of $k$-wise $d$-connected $\infty$-operads in \cref{subsec:BM-for-alg}. Lastly, we prove \cref{thm:main-thm} in \cref{subsec:gen-EHA}.
\subsection{Arity restricted $\infty$-operads}\label{subsec:arity}
Let $\Opun$ denote the $\infty$-category of unital $\infty$-operads, and for $k \geq 1$, let $\Opunleqk$ be the $\infty$-category of unital $k$-restricted $\infty$-operads (\cite[Definition 3.21]{dubey2024unital}). The main result of \cite{dubey2024unital} is the following:
\begin{proposition}[{\cite[Theorem 1.2]{dubey2024unital}}]
   Given $1 \leq k$,  there is a diagram of adjunctions
   \begin{equation}
   \begin{tikzcd}[column sep=1.5cm]
\Opunleqk
\arrow[bend left=45, hookrightarrow]{r}{\Lk}
\arrow[leftarrow]{r}[xshift=-0.15cm, yshift=0.2cm]{\bot}[swap, xshift=-0.15cm, yshift=-0.2cm]{\bot}[description, xshift=-0.15cm]{(-)^k}
\arrow[bend right=45, hookrightarrow]{r}[swap]{\Rk}
&
\Opun
\end{tikzcd}
\end{equation}
where the middle map $(-)^k$ is the restriction functor, and the left and right adjoints $\Lk$ and $\Rk$ are fully faithful.
\end{proposition}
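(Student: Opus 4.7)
The plan is to realize $\Opun$ as Segal presheaves on a combinatorial indexing category for operations — for instance, a dendroidal tree category, or Lurie's model of $\infty$-operads as suitable coCartesian fibrations over $\Finstar$ — and to realize $\Opunleqk$ as the analogous presheaf theory cut down to operations of arity at most $k$. Concretely, one exhibits a fully faithful inclusion $j \colon \cJ_{\leq k} \hookrightarrow \cJ$ of indexing $\infty$-categories such that $(-)^k$ corresponds to restriction $j^*$ along $j$. On presheaves, $j^*$ then automatically admits a left adjoint $j_!$ and a right adjoint $j_*$ given by left and right Kan extension, and both are fully faithful because $j$ is, via the standard identities $j^* j_! \simeq \id$ and $j^* j_* \simeq \id$. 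The adjoints $\Lk$ and $\Rk$ will be the restrictions of $j_!$ and $j_*$ to $\Opunleqk \subset \mathrm{PSh}(\cJ_{\leq k})$.

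The work lies in verifying that $\Lk$ and $\Rk$ preserve the Segal and unitality conditions cutting $\infty$-operads out of presheaves, so that the adjoint triple descends from presheaves to $\Opun$. For $\Rk$ this is the cleaner half: right Kan extension preserves the limit-type Segal conditions, and one checks by hand that unitality is preserved by this essentially cofree construction. The main obstacle is $\Lk$: left Kan extensions do not automatically preserve limit-type conditions, so one must describe the higher-arity operation spaces of $\Lk\cO$ explicitly as colimits indexed by decompositions into trees whose vertices all have arity at most $k$, and then verify the Segal condition via a cofinality argument on the poset of such decompositions. It is here that the unitality hypothesis is essential: contractible $0$-ary mapping spaces allow one to freely insert or collapse unit operations on the trees, which both makes the relevant indexing diagrams sifted enough for the Segal condition to be checked locally and ensures the unit map $\id \to (-)^k \circ \Lk$ is an equivalence on the nose rather than only up to some truncation.

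Once $\Lk$ and $\Rk$ are known to factor through $\Opun \hookrightarrow \mathrm{PSh}(\cJ)$, the adjunctions $\Lk \dashv (-)^k \dashv \Rk$ follow from the presheaf-level adjunctions by fully faithfulness of the inclusion $\Opun \hookrightarrow \mathrm{PSh}(\cJ)$, and fully faithfulness of $\Lk$ and $\Rk$ is inherited from that of $j_!$ and $j_*$. A useful sanity check along the way is the $\EE_1$ case: one recovers $\Lk(\EE_1^k) \simeq \AA_k$, for which the tree-combinatorial description reproduces the Stasheff associahedra and the unit restriction $\AA_k^k \simeq \EE_1^k$ becomes manifest, matching property \eqref{enum-item:AAk-2} of the introduction.
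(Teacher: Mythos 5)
This proposition is not proved in the paper itself---it is imported verbatim from \cite[Theorem 1.2]{dubey2024unital}---and your sketch follows essentially the same route as that reference: model unital (resp.\ unital $k$-restricted) $\infty$-operads as Segal-type presheaves on a tree category (resp.\ its full subcategory of trees with vertex arities at most $k$), identify $(-)^k$ with restriction along this fully faithful inclusion, produce $\Lk$ and $\Rk$ as left and right Kan extensions whose fully faithfulness is inherited from that of the inclusion, and concentrate the real work in checking that the Kan extensions preserve the Segal and unitality conditions, with the left adjoint handled by a cofinality argument over tree decompositions in which unitality is essential. Your placement of the difficulty at the left Kan extension, and your limit/colimit descriptions of the two adjoints, match the explicit descriptions this paper quotes from \cite[Corollaries 4.11, 5.18]{dubey2024unital}, so the proposal is correct and in line with the cited proof.
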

\begin{remark}
    Fix $k \geq 1$.
    Let $\cO$ be a unital $k$-restricted $\infty$-operad. Intuitively, for $n > k$, the $n$-ary space of $\Lk \cO$ is the space of all possible $n$-ary morphisms that can be constructed from the $\leq k$-ary morphisms of $\cO$. On the other hand, the $n$-ary space of  $\Rk \cO$ is the space of collections of $\leq k$-ary morphisms, one for each subset $I$ of $\underline{n}$ with cardinality $\leq k$, that are compatible under taking units. See \cite[Corollaries 4.11, 5.18]{dubey2024unital}.
\end{remark}
The composite $\Lk \circ (-)^k$ is a colocalization functor on $\Opun$, which we will simply denote as $\Lk$. Similarly, the composite $\Rk \circ (-)^k$ is a localization functor, which we will denote as $\Rk$.

Let $\cC$ be a unital symmetric mononidal $\infty$-category. Given objects $X_1, \cdots, X_n$ in $\cC$, by \cref{nota:iterative-square}, we have a $n$-cube $\square_{X_{\underline{n}}} \colon \cP(\underline{n}) \to \cC$. 
The following proposition gives a colimit description of the multi-ary hom spaces of $\Rk \cC$:

\begin{corollary}\label{cor:RkcC-colimit-description}
    Let $\cC$ be a unital symmetric monoidal $\infty$-category. 
     Given objects $X_1, \cdots, X_n$ in $\cC$ and $k \geq 1$, if the colimit of $\square_{X_{\underline{n}}}|_{\leq k}$ exists, then for any $Y \in \cC$ we have an equivalence: 
     \begin{equation}
        \Mul_{\Rk \cC}(X_1, \cdots, X_n; Y) \simeq \Hom_{\cC}(\colim \square_{X_{\underline{n}}}|_{\leq k}, Y).
     \end{equation}
\end{corollary}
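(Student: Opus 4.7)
The plan is to identify both sides of the claimed equivalence with the same limit over $\cP_{\leq k}(\underline{n})^{\op}$. The key input is the explicit formula for multi-ary morphism spaces in $\Rk$ given in \cite[Corollary 5.18]{dubey2024unital}: for a unital $k$-restricted $\infty$-operad $\cO$, the $n$-ary morphism space of $\Rk \cO$ at colors $X_1,\dots,X_n; Y$ is the limit of the diagram on $\cP_{\leq k}(\underline{n})^{\op}$ sending $I$ to $\Mul_{\cO}((X_i)_{i\in I}; Y)$, with transition maps induced by composition with unit $0$-ary operations.

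Applied to $\cO = \cC^k$, this produces
\[
\Mul_{\Rk \cC}(X_1,\dots,X_n; Y) \;\simeq\; \lim_{I \in \cP_{\leq k}(\underline{n})^{\op}} \Mul_{\cC}((X_i)_{i\in I}; Y).
\]
Since $\cC$ is a symmetric monoidal $\infty$-category, the operadic multi-ary hom space is represented by the tensor product: $\Mul_{\cC}((X_i)_{i\in I}; Y) \simeq \Hom_{\cC}\bigl(\bigotimes_{i \in I} X_i, Y\bigr) = \Hom_{\cC}(\square_{X_{\underline{n}}}(I), Y)$. Moreover, the transition maps induced by units precisely match the transition maps of $\square_{X_{\underline{n}}}$ under this identification (the unit insertion $1_\cC \to X_j$ in the cube corresponds, under the Yoneda identification, to precomposition with the $0$-ary unit operation).

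With this matching of diagrams in hand, I would then apply the universal property of the colimit to swap:
\[
\lim_{I \in \cP_{\leq k}(\underline{n})^{\op}} \Hom_{\cC}(\square_{X_{\underline{n}}}(I), Y) \;\simeq\; \Hom_{\cC}\bigl(\colim \square_{X_{\underline{n}}}|_{\leq k},\, Y\bigr),
\]
which uses exactly the hypothesis that the colimit exists. Concatenating the three equivalences gives the desired formula.

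The only real subtlety is to verify that the transition maps in the two diagrams agree (rather than just the objects); this is a direct unpacking of how the unit maps $1_\cC \to X_j$ are built into $\square_{X_{\underline{n}}}$ via \cref{ex:tensor-cubes} and \cref{nota:iterative-square}, matched against the unit-compatibility of collections used in \cite[Corollary 5.18]{dubey2024unital}. This is routine but is the one point where care is needed; the rest is formal.
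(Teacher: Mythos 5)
Your proposal is correct and takes essentially the same route as the paper: the paper's proof is precisely the one-line observation that the statement follows from the description of the $n$-ary morphism spaces of $\Rk \cC$ in \cite[Corollary 5.18]{dubey2024unital}, which gives the limit over $\cP_{\leq k}(\underline{n})^{\op}$ of the spaces $\Mul_{\cC}((X_i)_{i \in I}; Y)$ that you then rewrite as $\Hom_{\cC}(\square_{X_{\underline{n}}}(I), Y)$ and swap past the colimit. The diagram-matching step you flag (units in the cube versus unit-compatibility of collections) is exactly the content the paper delegates to that citation, so your expanded write-up fills in the intended details without deviating from the argument.
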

\begin{proof}
    This follows directly from the description of the $n$-ary morphism spaces of $\Rk \cC$ in \cite[Corollary 5.18]{dubey2024unital}.
\end{proof}

\begin{remark}
    Suppose $\cC$ has all small colimits; then 
    the statement of \cref{cor:RkcC-colimit-description} implies that the fibration $(\Rk \cC)^{\otimes} \to \Finstar^{\otimes}$ is locally coCartesian. However, this does \emph{not} imply that the fibration is coCartesian, i.e. $\Rk \cC$ defines a symmetric monoidal structure on $\cC$.
\end{remark}
\subsection{$k$-wise $n$-connected $\infty$-operads}
In this section we define $k$-wise $d$-connected $\infty$-operads.
\begin{definition}\label{def:k-wise-d-connected}
   Fix $k \geq 1$, $d \geq -2$, and $-1 \leq m \leq \infty$. A reduced $\infty$-operad $\cO$ is called \emph{$k$-wise $d$-connected} if for any $m$-topos $\cC$,  equipped with its Cartesian presentably symmetric monoidal structure, 
   and for any list of $X_1, \cdots, X_n$ in $\Alg_{\cO}(\cC)$, the map 
   \begin{equation}
    \colim(\square_{X_{\underline{n}}}|_{\leq k}) \to \prod_{i = 1}^{n} X_i
   \end{equation}
   in $\Alg_{\cO}(\cC)$ is $d$-connected. Here $\square_{X_{\underline{n}}}$ is the $n$-cube defined in \cref{nota:iterative-square}.
\end{definition}
\cref{cor:op-BM-k} gives a $k$-wise $d$-connected bound for coherent $\infty$-operads:
\begin{corollary}\label{cor:cohernet-d-connected}
   Let $\cO$ be a coherent multi-homwise $d$-connected $\infty$-operad; then $\cO$ is $k$-wise $k(d+2)-2$-connected.  
\end{corollary}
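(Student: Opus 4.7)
The plan is to obtain this as an immediate specialization of \cref{cor:op-BM-k}, applied to the cube whose edges are unit maps. Let $\cC$ be an $m$-topos with its Cartesian presentably symmetric monoidal structure, and fix objects $X_1, \ldots, X_n \in \Alg_{\cO}(\cC)$. By \cref{nota:iterative-square}, the cube $\square_{X_{\underline n}}$ appearing in \cref{def:k-wise-d-connected} is precisely $\square_{f_{\underline n}}$, where each $f_i$ is the unit map $1_{\Alg_{\cO}(\cC)} \to X_i$ from the monoidal unit (which is the terminal object of $\Alg_{\cO}(\cC)$ since the structure is Cartesian).

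Every map in an $\infty$-category is trivially $(-2)$-connected, so I may apply \cref{cor:op-BM-k} with $S = \underline n$, the morphisms $f_{\underline n}$ above, and $d_i = -2$ for every $i$. When $n > k$ the hypothesis $k < n$ of \cref{cor:op-BM-k} is satisfied and yields that the canonical map
\begin{equation}
\colim(\square_{X_{\underline n}}|_{\leq k}) \to \prod_{i=1}^n X_i
\end{equation}
is $\min_{I \subset \underline n,\, |I| = k+1}\bigl(\sum_{i \in I}((-2)+2) + k(d+2) - 2\bigr) = k(d+2)-2$-connected, where the arithmetic collapses the $d_i$-contributions to zero. When $n \leq k$, the restriction $\cP_{\leq k}(\underline n)$ equals the full poset $\cP(\underline n)$, which has $\underline n$ as a maximum, so $\colim(\square_{X_{\underline n}}|_{\leq k})$ is the top vertex $\prod_i X_i$ and the canonical map is an equivalence, hence $\infty$-connected. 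In either case the connectivity bound of \cref{def:k-wise-d-connected} is met, so $\cO$ is $k$-wise $(k(d+2)-2)$-connected.

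\textbf{Main obstacle.} All substantive work has been done upstream: the Blakers--Massey base case \cref{prop:op-BM-base-case-general-di}, the coherence hypothesis entering through \cref{prop:O-alg-preserves-pushout} (which lets the tensor product on $\Alg_{\cO}(\cC)$ commute with punctured colimits and so permits the inductive $\boxdot$-decomposition of \cref{prop:iter-colim}), and the induction on cube-dimension packaged into \cref{cor:op-BM-k}. The only delicate sanity check here is that feeding $d_i = -2$ into \cref{cor:op-BM-k} yields a nonvacuous bound; as computed above, the minimum is independent of $I$ and equals exactly $k(d+2)-2$, matching the claim.
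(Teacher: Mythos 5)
Your proof is correct and matches the paper's intended argument exactly: the paper states this corollary as an immediate consequence of \cref{cor:op-BM-k} applied to the unit maps $1 \to X_i$, which are $(-2)$-connected, so the minimum in \eqref{eq:minimum} collapses to $k(d+2)-2$ just as you compute. Your explicit handling of the degenerate case $n \leq k$ (where $\cP_{\leq k}(\underline{n}) = \cP(\underline{n})$ and the canonical map is an equivalence) is a small completeness check the paper leaves implicit.
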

\begin{example}\label{ex:En-k-wise-connected}
    By \cref{ex:connected-level} and \cref{prop:Ek-coherent}, for $n \geq 0$, the little cubes operad $\EE_n$ is $k$-wise $(kn-2)$-connected.
\end{example}
\begin{observation}\label{obs:E_0-k-wise-d-conn}
    Note that $\EE_0$ is not $k$-wise $d$-connected for any $d \geq -1$.
\end{observation}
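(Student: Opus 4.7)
The plan is to construct an explicit counterexample. I will work in the $1$-topos $\cC = \mathrm{Set}$ equipped with its Cartesian symmetric monoidal structure, under which $\Alg_{\EE_0}(\cC)$ identifies with the category $\mathrm{Set}_*$ of pointed sets (an $\EE_0$-algebra being just a pointed object), and the induced Cartesian monoidal structure on $\Alg_{\EE_0}(\cC)$ is the product of pointed sets. Since $d$-connectedness for any $d \geq -1$ implies $(-1)$-connectedness, i.e.\ surjectivity on $\pi_0$, it suffices to exhibit, for each $k \geq 1$, a list of pointed sets $X_1, \ldots, X_n$ for which the comparison map
\begin{equation}
    \colim(\square_{X_{\underline{n}}}|_{\leq k}) \to \prod_{i=1}^n X_i
\end{equation}
fails to be surjective on underlying sets.

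For a fixed $k \geq 1$, I would take $n = k+1$ and let $X_i = \{\ast, a_i\}$ be a two-element pointed set for each $i \in \underline{n}$. Unwinding the iterative definition of $\square_{X_{\underline{n}}}$, the cube sends $I \subset \underline{n}$ to $\prod_{i \in I} X_i$, with transition maps along $I \subset J$ inserting the basepoint in the new coordinates. The central step is then to identify the colimit: since the indexing category $\cP_{\leq k}(\underline{n})$ is connected (it has initial object $\varnothing$), colimits in $\mathrm{Set}_*$ over it agree with the corresponding colimits in $\mathrm{Set}$, and a direct check identifies $\colim(\square_{X_{\underline{n}}}|_{\leq k})$ with the ``fat wedge'' $T_k \subset \prod_{i=1}^n X_i$ consisting of tuples with at most $k$ non-basepoint coordinates.

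With this identification, the canonical comparison map is the inclusion $T_k \hookrightarrow \prod_{i=1}^n X_i$, and since $n = k+1 > k$, the tuple $(a_1, \ldots, a_n)$ lies outside $T_k$. Hence the map is not surjective on underlying sets, so fails to be $(-1)$-connected in $\mathrm{Set}_*$ (and therefore, by the forgetful characterization of connectedness for algebras, in $\Alg_{\EE_0}(\cC)$), and a fortiori fails to be $d$-connected for any $d \geq -1$. The only step requiring genuine care is the colimit identification: it amounts to checking that the canonical map $\bigsqcup_{|I| \leq k} \prod_{i \in I} X_i \to \prod_{i=1}^n X_i$ given by extension by basepoints descends to a bijection onto $T_k$ after imposing the colimit relations, which holds because two elements of distinct $X_I$ and $X_J$ are identified exactly when they admit a common pullback to $X_{I \cap J}$, and $|I \cap J| \leq k$ automatically.
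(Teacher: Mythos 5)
Your proof is correct, and since the paper states this as an unproved observation, your argument supplies exactly the justification the paper has in mind: in $\Alg_{\EE_0}(\mathrm{Set}) \simeq \mathrm{Set}_*$ with its Cartesian monoidal structure, the punctured-cube colimit is the fat wedge, and the fat-wedge inclusion into the product fails to be surjective once $n = k+1$. Your key colimit identification is sound (the forgetful functor from the coslice $\mathrm{Set}_* \to \mathrm{Set}$ creates connected colimits, $\cP_{\leq k}(\underline{n})$ is connected via $\varnothing$, and the cocone into $\prod_i X_i$ separates elements so the generated relation is exactly common restriction to $X_{I \cap J}$), and the reduction to $(-1)$-connectedness via the underlying-object criterion of \cite[Proposition 4.4.5]{EHA} matches the paper's own usage.
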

The next proposition allows us to give $k$-wise $d$-connected bounds for non-coherent $\infty$-operads, in particular for $\AA_k$ (see \cref{rem:Ak-not-coherent}).

\begin{proposition}\label{prop:k-wise-d-conn-and-d-eq}
    Let $f \colon \cO_1 \to \cO_2$ be a map of reduced $\infty$-operads. If $\cO_2$ is $k$-wise multi-homwise $d$-connected and $f$ is a $d$-equivalence (\cref{def:op-d-equivalence}), then $\cO_1$ is also $k$-wise $d$-connected.
\end{proposition}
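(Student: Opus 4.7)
The plan is to transfer the $k$-wise $d$-connectedness from $\cO_2$ to $\cO_1$ by working inside the full subcategory $\cC_{\leq d}$ of $d$-truncated objects of $\cC$. By \cref{def:d-topos}, $\cC_{\leq d}$ is itself a $(d+1)$-topos, hence in particular a $(d+1)$-category; since $f \colon \cO_1 \to \cO_2$ is a $d$-equivalence, \cref{obs:d-equivalence-d-cat} provides an equivalence $f^* \colon \Alg_{\cO_2}(\cC_{\leq d}) \isoto \Alg_{\cO_1}(\cC_{\leq d})$.

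Fix $X_1, \ldots, X_n \in \Alg_{\cO_1}(\cC)$ and let $\alpha_1 \colon \colim(\square_{X_{\underline{n}}}|_{\leq k}) \to \prod X_i$ denote the canonical map. I would first show that the $d$-truncated objects of $\Alg_{\cO_1}(\cC)$ are exactly those whose underlying object in $\cC$ is $d$-truncated; since $\cO_1$ is reduced and $\cC$ is presentable, the forgetful functor $\fgt$ admits a left adjoint $\Free$, and then $\Hom_{\cC}(Y, \fgt Z) \simeq \Hom_{\Alg_{\cO_1}(\cC)}(\Free Y, Z)$ is $d$-truncated for every $Y$ whenever $Z$ is $d$-truncated. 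Consequently every $d$-truncated $Z \in \Alg_{\cO_1}(\cC)$ lies in the essential image of the fully faithful inclusion $\Alg_{\cO_1}(\cC_{\leq d}) \hookrightarrow \Alg_{\cO_1}(\cC)$, whose left adjoint is induced by the symmetric-monoidal Cartesian truncation $\tau_{\leq d}\colon \cC \to \cC_{\leq d}$. The question of $\alpha_1$ being $d$-connected therefore reduces to showing that $\tau_{\leq d}\alpha_1$ is an equivalence in $\Alg_{\cO_1}(\cC_{\leq d})$. Because $\tau_{\leq d}$ preserves colimits and Cartesian products, $\tau_{\leq d}\alpha_1$ is identified with the analogous canonical map built from the objects $\tau_{\leq d}X_i$; transferring along $f^*$, each $\tau_{\leq d}X_i$ lifts uniquely to some $\tilde X_i \in \Alg_{\cO_2}(\cC_{\leq d})$ and $\tau_{\leq d}\alpha_1$ corresponds to the analogous canonical map $\alpha_2$ for the $\tilde X_i$. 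Since $\cC_{\leq d}$ is an $m$-topos (with $m=d+1$), the hypothesis that $\cO_2$ is $k$-wise $d$-connected applies to yield that $\alpha_2$ is $d$-connected in $\Alg_{\cO_2}(\cC_{\leq d})$.

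The final step is to observe that every hom space in $\Alg_{\cO_2}(\cC_{\leq d})$ is $d$-truncated, so every morphism there is $d$-truncated in the sense of \cref{def:d-truncated-for-general-cat}; a morphism lying in both classes of the $(d$-connected$, d$-truncated$)$ factorization system from \cref{obs:presentable-d-fact} is necessarily an equivalence, so $\alpha_2$ is an equivalence and hence so is $\tau_{\leq d}\alpha_1$. The step I expect to be the main subtle point is the characterization of $d$-truncated $\cO_1$-algebras via their underlying objects, which relies essentially on $\cO_1$ being reduced so that a left adjoint to the forgetful functor is available; the remainder is formal manipulation of the truncation adjunction, the $d$-equivalence, and the factorization system.
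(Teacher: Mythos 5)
There is a genuine gap at the pivot of your argument: the claim that ``the question of $\alpha_1$ being $d$-connected therefore reduces to showing that $\tau_{\leq d}\alpha_1$ is an equivalence.'' The condition that $\tau_{\leq d}\alpha_1$ is an equivalence is exactly $(d-\onehalf)$-connectedness in the sense of \cref{def:d-1/2-eq}, and this is strictly weaker than $d$-connectedness: in general only $(d+\onehalf)$-connected maps in an $m$-topos are $d$-connected, while $(d-\onehalf)$-connected maps need only be $(d-1)$-connected. A concrete counterexample in $\Spaces$ with $d = 1$ is the map $\pt \to S^2$: its $1$-truncation is an equivalence, but its fiber $\Omega S^2$ has $\pi_1 \neq 0$, so the map is not $1$-connected. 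Your preparatory observations (that $d$-truncated algebras are detected on underlying objects, and that they lie in the image of $\Alg_{\cO_1}(\cC_{\leq d})$) only show that $\alpha_1$ becomes orthogonal to $d$-truncated morphisms \emph{over the terminal object}; $d$-connectedness per \cref{def:d-connected-for-general-cat} requires left lifting against \emph{all} $d$-truncated morphisms, which does not follow from inverting $\tau_{\leq d}$.

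The missing ingredient, which is precisely how the paper bridges this half-integer gap, is that $\alpha_1$ admits a section: by the same argument as in the proof of \cref{prop:op-BM-base-case-only-d} (using \cite[Lemma 5.1.1]{EHA}), the composite from the coproduct through $\colim(\square_{X_{\underline{n}}}|_{\leq k})$ to $\prod X_i$ has a section, hence so does $\alpha_1$; then \cite[Proposition 4.3.5]{EHA} upgrades $(d-\onehalf)$-connectedness to $d$-connectedness for maps with a section. Once that is in place, the rest of your proof is essentially the paper's: $\tau_{\leq d}$ commutes with products and colimits, the problem transfers to $\cC_{\leq d}$, and \cref{obs:d-equivalence-d-cat} together with the $k$-wise $d$-connectedness of $\cO_2$ finishes the argument (your closing step, that a $d$-connected map in a $(d+1)$-category is an equivalence, is a correct way to conclude). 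Note also that the section construction is where the hypothesis $d > -2$ and the exclusion of $\EE_0$ (\cref{obs:E_0-k-wise-d-conn}) silently enter --- for $\EE_0$-algebras the comparison map from coproduct to product has no section --- so any repaired version of your proof should record the case split on $d$ as the paper does.
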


\begin{proof}
    The statement is vacuous when $d = -2$. 
    From now on we assume that $d > -2$. By \cref{obs:E_0-k-wise-d-conn}, we see that $\cO_1$ and $\cO_2$ are not $\EE_0$. 
    Given $n$ objects $X_1, \cdots, X_n$ in $\Alg_{\cO}(\cC)$,
    by the same argument as the proof of  \cref{prop:op-BM-base-case-only-d}, the map $\colim(\square_{X_{\underline{n}}}|_{\leq k}) \to \prod X_i$ has a section. By \cite[Prosition 4.3.5]{EHA}, it suffices to show that this map is $(d - \onehalf)$-connected. 
    As the truncation functor $\tau_{\leq d}$ commutes with products and colimits, 
    the $d$-truncation of $\colim(\square_{X_{\underline{n}}}|_{\leq k}) \to \prod X_i$ is
    \begin{equation}
      \colim(\square_{\tau_{\leq d}X_{\underline{n}}}|_{\leq k}) \to \prod \tau_{\leq d} X_i.
    \end{equation}    
    Hence we are reduced to considering the statement for $\cC_{\leq d}$, which is a $(d+1)$-category. By \cref{obs:d-equivalence-d-cat}, the map $\Alg_{\cO_2}(\cC_{\leq d}) \to \Alg_{\cO_1}(\cC_{\leq d})$ is an equivalence of $\infty$-categories, and the result follows from the fact that $\cO_2$ is $k$-wise $d$-connected.
\end{proof}

Recall from \cref{ex:connected-level} that $\AA_n \to \EE_1$ is multi-homwise $(n-3)$-connected. Finally we get the $k$-wise $d$-connected bounds for $\AA_n$.
\begin{corollary}\label{cor:AAn-k-wise-d-conn}
    The $\infty$-operad $\AA_n$ is $k$-wise $\min(n-3, k-2)$-connected.
\end{corollary}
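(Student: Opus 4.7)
The plan is to combine the coherent-case bound \cref{cor:cohernet-d-connected} with the descent result \cref{prop:k-wise-d-conn-and-d-eq}, using $\EE_1$ as an intermediate operad. The key observation is that although $\AA_n$ itself is not coherent (\cref{rem:Ak-not-coherent}), it maps to the coherent operad $\EE_1$ via a highly connected map, so we can transfer the $k$-wise connectivity bound from $\EE_1$ back to $\AA_n$.

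First I would handle $\EE_1$. By \cref{prop:Ek-coherent} the little $1$-cubes operad $\EE_1$ is coherent, and by \cref{ex:connected-level} the canonical map $\EE_1 \to \EE_\infty$ is multi-homwise $(1-2) = (-1)$-connected, so $\EE_1$ is itself multi-homwise $(-1)$-connected. Applying \cref{cor:cohernet-d-connected} with $d = -1$ yields that $\EE_1$ is $k$-wise $(k(-1 + 2) - 2) = (k-2)$-connected.

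Next I would analyze the unit map $\AA_n \to \EE_1$. By \cref{ex:connected-level}, this map is multi-homwise $(n-3)$-connected, and since $\AA_n$ and $\EE_1$ are both reduced the map is automatically surjective on colors (\cref{ex:reduced-surjective-on-colors}). By the remark following \cref{def:op-d-equivalence}, it is therefore a $(n-3)$-equivalence, and a fortiori a $d$-equivalence for any $d \leq n-3$.

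Finally, set $d := \min(n-3, k-2)$. The operad $\EE_1$ is $k$-wise $(k-2)$-connected and hence $k$-wise $d$-connected (since the class of $d'$-connected morphisms shrinks as $d'$ grows, being $k$-wise $(k-2)$-connected implies being $k$-wise $d$-connected for $d \leq k-2$). Moreover, the map $\AA_n \to \EE_1$ is a $d$-equivalence since $d \leq n-3$. Now \cref{prop:k-wise-d-conn-and-d-eq} applies and gives that $\AA_n$ is $k$-wise $d = \min(n-3, k-2)$-connected, as claimed. There is no real obstacle here: the content has been packaged into the two input results, and the proof is simply the observation that $\EE_1$ plays the role of a coherent ``completion'' of $\AA_n$ through which the connectivity bound can be transferred.
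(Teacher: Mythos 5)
Your proof is correct and is precisely the paper's intended argument: the paper derives this corollary by combining \cref{ex:En-k-wise-connected} (i.e., \cref{cor:cohernet-d-connected} applied to the coherent operad $\EE_1$, giving $k$-wise $(k-2)$-connectivity) with the transfer result \cref{prop:k-wise-d-conn-and-d-eq} along the multi-homwise $(n-3)$-connected, hence $(n-3)$-equivalence, map $\AA_n \to \EE_1$ from \cref{ex:connected-level}. Your additional explicit checks (surjectivity on colors via \cref{ex:reduced-surjective-on-colors}, and the monotonicity of $d$-equivalences and of $k$-wise $d$-connectivity in $d$) are exactly the details the paper leaves implicit.
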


The notion of $k$-wise $d$-connected $\infty$-operads allow us to get bounds for the truncated level of $\cC \to \Rk\cC$:
\begin{proposition}\label{prop:AlgO-RkAlgO-truncated}
    Fix $d, m \geq -2$ and $k \geq 1$.
    Let $\cO$ be a reduced $\infty$-operad and $\cC$ be a $(m+1)$-topos equipped with the Cartesian symmetric monoidal structure. 
    Suppose that $\cO$ is $k$-wise $d$-connected, then the map of $\infty$-operads
    \begin{equation}
        \Alg_{\cO}(\cC) \to  \Rk(\Alg_{\cO}(\cC))
    \end{equation}
    is multi-homwise $(m-d-2)$-truncated.
\end{proposition}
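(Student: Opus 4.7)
The plan is to unwind both sides of the claimed truncated map using our explicit description of $\Rk$-operations on $\Alg_\cO(\cC)$, then to reduce the assertion to a standard truncation--connectivity trade-off.

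Fix $n \geq 1$ and objects $X_1,\ldots,X_n,Y\in\Alg_\cO(\cC)$. Since $\cC$ carries the Cartesian symmetric monoidal structure, so does $\Alg_\cO(\cC)$ by \cite[Lemma 3.2.5]{EHA}, and hence
\begin{equation*}
    \Mul_{\Alg_\cO(\cC)}(X_1,\ldots,X_n;Y) \simeq \Hom_{\Alg_\cO(\cC)}\Bigl(\prod_{i=1}^{n} X_i,\, Y\Bigr).
\end{equation*}
Presentability of $\Alg_\cO(\cC)$ ensures that the punctured cube colimit $\colim \square_{X_{\underline n}}|_{\leq k}$ exists, so \cref{cor:RkcC-colimit-description} applied to the unital symmetric monoidal $\infty$-category $\Alg_\cO(\cC)$ gives
\begin{equation*}
    \Mul_{\Rk\Alg_\cO(\cC)}(X_1,\ldots,X_n;Y) \simeq \Hom_{\Alg_\cO(\cC)}\Bigl(\colim \square_{X_{\underline n}}|_{\leq k},\, Y\Bigr).
\end{equation*}
Under these identifications the map $\Alg_\cO(\cC) \to \Rk\Alg_\cO(\cC)$ on $n$-ary morphism spaces is precomposition with the canonical comparison $\iota\colon \colim \square_{X_{\underline n}}|_{\leq k}\to \prod_{i=1}^n X_i$ (for $n\leq k$ this map is an equivalence, so the interesting range is $n>k$).

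Next I would observe two things. First, by the $k$-wise $d$-connectivity hypothesis on $\cO$ (\cref{def:k-wise-d-connected}) the morphism $\iota$ is $d$-connected in $\Alg_\cO(\cC)$. Second, because $\cC$ is an $(m+1)$-topos every object of $\cC$ is $m$-truncated, and since mapping spaces in $\Alg_\cO(\cC)$ are built as limits of mapping spaces in $\cC$ and $m$-truncated spaces are closed under limits, every object of $\Alg_\cO(\cC)$ is likewise $m$-truncated; in particular $Y$ is. The conclusion then follows from the general principle that in a presentable $\infty$-category, precomposition with a $d$-connected morphism into an $m$-truncated object is an $(m-d-2)$-truncated map of spaces.

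The main obstacle will be justifying this last truncation--connectivity trade-off cleanly. The base case $m=d$ is the orthogonality of the ($d$-connected, $d$-truncated) factorization system of \cref{obs:presentable-d-fact}, applied to the terminal morphism $Y\to \ast$, which shows the induced map on mapping spaces is an equivalence (i.e.\ $(-2)$-truncated). For $m>d$ one inducts on $m$ using the Postnikov cofiltration $Y\to\tau_{\leq m-1}Y\to\cdots\to\tau_{\leq d}Y$ inside $\Alg_\cO(\cC)$: at each stage the fiber of $\tau_{\leq j}Y\to\tau_{\leq j-1}Y$ is an Eilenberg--MacLane-type $j$-truncated object, and applying $\Hom(\iota,-)$ to the corresponding fiber sequence shows that the truncation level of the induced map on mapping spaces increases by exactly one when $m$ does, yielding the desired $(m-d-2)$-truncation.
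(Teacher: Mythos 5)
Your reduction is exactly the paper's: you identify the $n$-ary morphism spaces of $\Rk(\Alg_{\cO}(\cC))$ via \cref{cor:RkcC-colimit-description}, recognize the map on multi-hom spaces as precomposition with the canonical comparison $\iota \colon \colim(\square_{X_{\underline{n}}}|_{\leq k}) \to \prod_i X_i$, which is $d$-connected by \cref{def:k-wise-d-connected}, and then invoke a connectivity--truncatedness trade-off against the $m$-truncated object $Y$. The paper does precisely this, phrasing the fiber over $f$ as the space of lifts in the square with left leg $\iota$ and right leg $Y \to *$, and then citing \cite[Proposition 4.2.8]{EHA} for the bound $(m-d-2)$ --- which is the "general principle" you state.

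The gap is in your self-contained justification of that principle. Your Postnikov induction implicitly imports topos-theoretic machinery --- principality of Postnikov towers, Eilenberg--MacLane classification of the stages, and long fiber sequences of mapping spaces --- that is not available in $\Alg_{\cO}(\cC)$: this category is \emph{not} an $\infty$-topos (for reduced $\cO$ and Cartesian $\cC$ the unit algebra is a zero object, so $\Alg_{\cO}(\cC)$ is pointed, which already rules out being a nontrivial topos). Moreover, the fibers of $\Hom(B, Y) \to \Hom(B, \tau_{\leq m-1} Y)$ are spaces of lifts against $Y \to \tau_{\leq m-1} Y$, not mapping spaces into the fiber, so "applying $\Hom(\iota,-)$ to the fiber sequence" does not literally parse without choosing compatible basepoints; and estimating those lift spaces is an instance of the very statement being proved, so the induction as sketched is circular. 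The repair is purely formal and avoids Postnikov towers entirely: induct on $m-d$ using diagonals. The base case $m = d$ is the contractibility of lift spaces built into \cref{def:d-connected-for-general-cat} (available in $\Alg_{\cO}(\cC)$ by \cref{obs:presentable-d-fact}, since it is presentable), and for the inductive step one notes that the path spaces of a lift space are themselves lift spaces for the square whose right leg is the diagonal $Y \to Y \times Y$ of $Y \to *$, which is $(m-1)$-truncated; this yields $(m-d-2)$-truncatedness of the original lift space. This diagonal argument is exactly what \cite[Proposition 4.2.8]{EHA} encodes, so replacing your final paragraph with it (or with the citation) makes your proof agree with the paper's.
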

\begin{proof}
    Given objects $X_1, \cdots, X_n, Y$ in $\cC$, by \cref{cor:RkcC-colimit-description}, we have an equivalence 
    \begin{equation}
        \Mul_{\Rk\Alg_{\cO}(\cC)}(X_1, \cdots, X_n; Y) \simeq \Hom_{\Alg_{\cO}(\cC)}(\colim(\square_{X_{\underline{n}}}|_{\leq k}), Y).
    \end{equation}
    We would like to show that the map
    \begin{equation}\label{eq:helper-1}
        \Hom_{\Alg_{\cO}(\cC)}(\prod X_i, Y) \to \Hom_{\Alg_{\cO}(\cC)}(\colim(\square_{X_{\underline{n}}}|_{\leq k}), Y)
    \end{equation}
    is $(n-d-2)$-truncated. It suffices to show that the fibers are $(n-d-2)$-truncated. Fix a map $f \colon \colim(\square_{X_{\underline{n}}}|_{\leq k}) \to Y$.
    Consider the square 
    \begin{equation}
        \begin{tikzcd}
            \colim(\square_{X_{\underline{n}}}|_{\leq k}) \ar[r, "f"] \ar[d] & Y \ar[d]\\ 
            \prod X_i \ar[r] & * 
        \end{tikzcd}
    \end{equation}The space of lifts of this square is precisely the fiber of \eqref{eq:helper-1} at $f$.
    By assumption the left vertical map is $d$-connected (as $\cO$ is a $k$-wise $d$-connected $\infty$-operad) and the right vertical map is $m$-truncated (as $\cC$ is a $(m+1)$-category). By \cite[Proposition 4.2.8]{EHA}, the space of lifts is 
    $(m-d-2)$-truncated.
\end{proof}

\subsection{Generalized Eckmann-Hilton argumet}\label{subsec:gen-EHA}
In this subsection we prove our main result, \cref{thm:main-thm}.
First we have the following observation:
\begin{observation}\label{obs:adj-obs}
    Consider an adjunction
    $
        \begin{tikzcd}[column sep=1.5cm]
    F  \colon \cC
    \arrow[right, yshift=0.9ex]{r}{}
    \arrow[leftarrow, yshift=-0.9ex]{r}[yshift=-0.15ex]{\bot}[swap]{}
    &
    \cD \colon G
    \end{tikzcd}$
    between $\infty$-categories and objects $c, d \in \cC$. Suppose we have a morphism $g \colon Fc \to Fd$ in $D$, then the 
    space of fibers 
    \begin{equation}
        \Hom_{\cC}(c,d) \times_{\Hom_{\cD}(Fc, Fd)} \{g\}
    \end{equation}
    is equivalent to the
    space of lifts 
    \begin{equation}
        \begin{tikzcd}
             & d \ar[d] \\ 
        c \ar[ur, dashed] \ar[r, "g'"] & GFd
        \end{tikzcd}
    \end{equation}
    where $g' \colon c \to GFd$ is the adjunt of $Fg$.
\end{observation}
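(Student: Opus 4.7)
The plan is to reduce the comparison map $\Hom_\cC(c,d) \to \Hom_\cD(Fc, Fd)$ to post-composition with the unit of the adjunction, after which the claim becomes a tautology.

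First I would invoke the defining equivalence of the adjunction: there is a natural equivalence
\begin{equation}
    \Hom_\cD(Fc, Fd) \simeq \Hom_\cC(c, GFd),
\end{equation}
under which the morphism $g \colon Fc \to Fd$ corresponds precisely to its adjunct $g' \colon c \to GFd$. Next I would use the standard naturality property of the adjunction unit $\eta \colon \id_\cC \Rightarrow GF$: for any $h \colon c \to d$ in $\cC$, the adjunct of $F(h) \colon Fc \to Fd$ is the composite $\eta_d \circ h \colon c \to GFd$. Consequently the diagram
\begin{equation}
    \begin{tikzcd}
        \Hom_\cC(c,d) \ar[r, "F"] \ar[dr, "\eta_d \circ -"'] & \Hom_\cD(Fc, Fd) \ar[d, "\simeq"] \\
         & \Hom_\cC(c, GFd)
    \end{tikzcd}
\end{equation}
commutes in the $\infty$-category of spaces, with the vertical map an equivalence.

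It follows that the fiber of the top horizontal map over $g$ is naturally equivalent to the fiber of the diagonal map over $g'$. But this latter fiber is, by definition, the space of morphisms $h \colon c \to d$ equipped with a homotopy $\eta_d \circ h \simeq g'$, that is, the space of lifts of $g'$ along $\eta_d \colon d \to GFd$, as required.

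I expect no serious obstacle here; the only delicate point is rigorously establishing the commutativity of the triangle above at the $\infty$-categorical level, which follows from the standard presentation of adjunctions via coCartesian/Cartesian fibrations as in \cite[\S 5.2]{HTT} — one simply unwinds the fact that the adjunct of $F(h)$ is $\eta_d \circ h$ up to coherent homotopy.
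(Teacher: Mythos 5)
Your proof is correct, and it is precisely the standard unwinding that the paper leaves implicit: the statement appears there as an unproved Observation, with the intended argument being exactly your identification of $F\colon \Hom_{\cC}(c,d) \to \Hom_{\cD}(Fc,Fd)$ with post-composition by the unit $\eta_d\colon d \to GFd$ under the adjunction equivalence, so that the fiber over $g$ becomes the fiber of $(\eta_d)_*$ over $g'$, i.e.\ the space of lifts. (You also correctly read ``adjunct of $Fg$'' as a typo for the adjunct of $g$, which your argument handles properly.)
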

This implies the following useful corollary:
\begin{lemma}\label{lem:adjunction-section-lem}
    Consider an adjunction
    $
        \begin{tikzcd}[column sep=1.5cm]
    F  \colon \cC
    \arrow[right, yshift=0.9ex]{r}{}
    \arrow[leftarrow, yshift=-0.9ex]{r}[yshift=-0.15ex]{\bot}[swap]{}
    &
    \cD \colon G
    \end{tikzcd}$
    between $\infty$-categories and objects $b, c, d \in \cC$. 
    Suppose we have morphisms $f \colon b \to c$ and $g \colon b \to d$ in $\cC$ such that $Ff \colon Fb \to Fc$ is an equivalence. Let $\phi \colon c \to GFd$ denote the adjunct to the composite $Fc \xrightarrow{Ff^{-1}} Fb \xrightarrow{Fg} Fd$. Let $L_1$ and $L_2$ denote the space of lifts of the left and right diagrams respectively:
    \begin{equation}
        \begin{tikzcd}
            b \ar[r] \ar[d] & d \ar[d] \\ 
            c \ar[ur, dashed] \ar[r, "\phi"] & GFd
        \end{tikzcd}
        \quad
        \quad
        \begin{tikzcd}
            b \ar[r] \ar[d] & d \\ 
            c \ar[ur, dashed]
        \end{tikzcd} 
    \end{equation} 
    Then the forgetful map $L_1 \to L_2$ has a section.
\end{lemma}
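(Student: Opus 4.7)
The plan is to realize the forgetful map $L_1 \to L_2$ as a pullback of a point inclusion along a map into a contractible space, and thereby show that the forgetful map is in fact an equivalence of spaces (which in particular yields a section).

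First, I would interpret $L_1$ and $L_2$ as hom spaces in slice categories. Writing $b/\cC$ for the coslice under $b$, with subscripts denoting the structural maps, we have $L_2 = \Hom_{b/\cC}(c_f, d_g)$, and $L_1$ should be the fiber of post-composition
$$\eta_d \circ - \colon \Hom_{b/\cC}(c_f, d_g) \to \Hom_{b/\cC}(c_f, (GFd)_{\eta_d \circ g})$$
over the point $\phi$. Here $\phi$ indeed defines a morphism $c_f \to (GFd)_{\eta_d \circ g}$ in $b/\cC$, because $\phi \circ f$ and $\eta_d \circ g$ are both adjoint to $Fg$, hence canonically equivalent.

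Next, I would unwind the target of this post-composition via the adjunction. By definition, $\Hom_{b/\cC}(c_f, (GFd)_{\eta_d \circ g})$ is the fiber of precomposition $- \circ f \colon \Hom_\cC(c, GFd) \to \Hom_\cC(b, GFd)$ over $\eta_d \circ g$. The adjunction $F \dashv G$ identifies this with the fiber of $- \circ Ff \colon \Hom_\cD(Fc, Fd) \to \Hom_\cD(Fb, Fd)$ over $Fg$. Since $Ff$ is an equivalence in $\cD$ by hypothesis, precomposition with $Ff$ is an equivalence of mapping spaces, so the fiber in question is contractible. Therefore $L_1 \to L_2$ is the pullback of a point inclusion into a contractible space, and hence is an equivalence.

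The main technical point is verifying the description of $L_1$ as the claimed fiber in the coslice $b/\cC$; this amounts to identifying the space of lifts of a commutative square with a hom space in an appropriate double slice (first coslice under $b$, then slice over $GFd$), which is standard but requires care with the coherence data. Once that identification is in place, the rest is a direct application of \cref{obs:adj-obs} together with the hypothesis on $Ff$.
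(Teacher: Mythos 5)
Your argument is correct, and it in fact proves something stronger than the lemma asserts: that the forgetful map $L_1 \to L_2$ is an equivalence, not merely that it admits a section. The paper's own proof goes the other way and is pointwise: given $\tilde g \in L_2$, the homotopy $\tilde g \circ f \simeq g$ identifies $F\tilde g$ with $Fg \circ (Ff)^{-1}$, whose adjunct is $\phi$, and \cref{obs:adj-obs} converts this datum into a lift of the lower triangle, hence a point of $L_1$ over $\tilde g$. Your route instead exhibits $L_1 \to L_2$ as the pullback of the point inclusion $\{\phi\} \hookrightarrow \Hom_{b/\cC}(c_f, (GFd)_{\eta_d \circ g})$ along postcomposition with $\eta_d$, and kills the base: by naturality of the adjunction that base is the fiber of $- \circ Ff \colon \Hom_{\cD}(Fc, Fd) \to \Hom_{\cD}(Fb, Fd)$ over $Fg$ (the adjunct of $\eta_d \circ g$, via the triangle identity), which is contractible because $Ff$ is an equivalence. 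Comparing the two: your version works in $b/\cC$ throughout, which makes the compatibility with the upper triangle explicit — something the paper's one-line proof leaves implicit, since \cref{obs:adj-obs} as stated carries no $b$ — and the stronger conclusion is harmless for the application in \cref{thm:main-thm}, which only needs that $L_2$ is a retract of $L_1$; the cost is the double-slice identification of lifting spaces that you flag, which is indeed standard but is the place where all the coherence bookkeeping lives. Two small nitpicks: your closing sentence credits the final step to \cref{obs:adj-obs}, but your middle paragraph never actually invokes it — you use the adjunction naturality square for $\Hom(-, GFd)$ directly, which is fine and self-contained as written; and the square in the lemma carries an implicit commutativity datum, which your observation that $\phi \circ f$ and $\eta_d \circ g$ are both adjunct to $Fg$ correctly pins down as the canonical one, the same choice the paper uses tacitly.
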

\begin{proof}
    Given a map $\tilde{g} \colon c \to d$ lifting $b \to d$, the induced map $F\tilde{g} \colon Fc \to Fd$ is canonically equivalent to $\phi$. Now the result follows from \cref{obs:adj-obs}.
\end{proof}

Now we are ready to state the generalized Eckmann-Hilton argument:
\begin{theorem}\label{thm:main-thm}
    Fix $d_1, d_2 \geq -2$ and $k \geq 1$.
    Let $f \colon \cP \to \cQ$ be a multi-homwise $d_1$-connected map between reduced $\infty$-operads such that the $k$-restriction $f^k \colon \cP^k \to \cQ^k$ is an equivalence. Let $\cR$ be a reduced $k$-wise $d_2$-connected $\infty$-operad, then the induced map 
    \begin{equation}
        \cP \otimes \cR \to \cQ \otimes \cR
    \end{equation}
    is a $(d_1 + d_2 + 2)$-equivalence.
\end{theorem}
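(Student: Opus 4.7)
The plan is to unpack the conclusion via \cref{obs:d-equivalence-d-cat}: it suffices to show that for every symmetric monoidal $(d_1+d_2+3)$-category $\cC$, the induced functor $f^* \colon \Alg_{\cQ \otimes \cR}(\cC) \to \Alg_{\cP \otimes \cR}(\cC)$ is an equivalence. Using the tensor-hom identification $\Alg_{\cO \otimes \cR}(\cC) \simeq \Alg_{\cO}(\Alg_{\cR}(\cC))$, I set $\cD := \Alg_{\cR}(\cC)$. By a Yoneda-style embedding of $\cC$ into a presheaf topos, I would reduce to the case where $\cC$ is a Cartesian symmetric monoidal $(d_1+d_2+3)$-topos, so that $\cD$ becomes presentable and inherits a Cartesian symmetric monoidal structure.

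The key input is \cref{prop:AlgO-RkAlgO-truncated} with $m = d_1+d_2+2$ and $d = d_2$: since $\cR$ is $k$-wise $d_2$-connected, the unit $e \colon \cD \to \Rk\cD$ is a multi-homwise $d_1$-truncated map of $\infty$-operads. I would then contemplate the commutative square
\begin{equation*}
\begin{tikzcd}
\Alg_{\cQ}(\cD) \ar[r, "f^*"] \ar[d, "e_*"] & \Alg_{\cP}(\cD) \ar[d, "e_*"] \\
\Alg_{\cQ}(\Rk\cD) \ar[r, "f^*"] & \Alg_{\cP}(\Rk\cD).
\end{tikzcd}
\end{equation*}
The bottom arrow is an equivalence: the adjunction $(-)^k \dashv \Rk$ supplies a natural equivalence $\Alg_{\cO}(\Rk\cD) \simeq \Map_{\Opunleqk}(\cO^k, \cD^k)$ for any unital $\infty$-operad $\cO$, and the hypothesis that $f^k \colon \cP^k \to \cQ^k$ is an equivalence forces the bottom arrow to be an equivalence.

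To upgrade this to an equivalence on the top row, I would exploit the orthogonality between $d_1$-connected and $d_1$-truncated morphisms (\cref{obs:presentable-d-fact}) in the presentable $\infty$-categories $\Alg_{\cP}(\cD)$ and $\Alg_{\cQ}(\cD)$, via a lifting argument in the spirit of \cref{lem:adjunction-section-lem}. Concretely, for essential surjectivity of the top $f^*$, given $B \in \Alg_{\cP}(\cD)$ I would transport $e_* B$ across the bottom equivalence to a candidate $\overline{A} \in \Alg_{\cQ}(\Rk\cD)$, and then produce a lift $A \in \Alg_{\cQ}(\cD)$ with $e_* A \simeq \overline{A}$ and $f^* A \simeq B$ as the essentially unique filler in an appropriate lifting diagram; a parallel analysis on mapping spaces handles fully faithfulness. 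The main obstacle I anticipate is carefully promoting the operad-level multi-homwise $d_1$-truncatedness of $e$ to the correct $d_1$-truncation property of the induced functor $e_*$ on algebras (being mindful of \cref{warn:2-diff-fact}), and arranging the factorization data uniformly on the $\cP$- and $\cQ$-sides so that the lifts produced are compatibly identified under $f^*$, yielding essential surjectivity and fully faithfulness in one unified argument.
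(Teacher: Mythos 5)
There is a genuine gap, in two places, even though your skeleton (reduce to topoi, exploit the $(-)^k \dashv \Rk$ adjunction and the equivalence $f^k$, feed in the truncatedness of $\cD \to \Rk\cD$, finish by connected/truncated orthogonality) matches the paper's proof. First, the reduction: \cref{obs:d-equivalence-d-cat} only gives the \emph{forward} implication (a $d$-equivalence induces equivalences on algebra categories), not the detection statement you need. The paper instead invokes \cite[Proposition 3.2.6]{EHA}: a map of $\infty$-operads is a $D$-equivalence if and only if it induces equivalences on mapping spaces $\Hom_{\Op}(-, \cC)$ for all $(D+1)$-topoi $\cC$ equipped with the \emph{Cartesian} symmetric monoidal structure. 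Your proposed repair --- a Yoneda-style embedding of an arbitrary symmetric monoidal $\cC$ into a presheaf topos so that it ``becomes Cartesian'' --- does not work: the monoidal structure such an embedding respects is Day convolution, and a generic symmetric monoidal category admits no strong monoidal fully faithful embedding into a Cartesian one (Cartesianness would supply projection-type maps out of tensor products that simply do not exist). This matters because your key quantitative input, \cref{prop:AlgO-RkAlgO-truncated} with $m = d_1+d_2+2$ and $d = d_2$ (which you identified correctly), as well as the very definition of $k$-wise connectedness (\cref{def:k-wise-d-connected}), is only available for Cartesian topoi.

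Second, and more seriously, the step you yourself flag as the ``main obstacle'' is exactly where the content of the theorem lies, and the tool you cite is the wrong one. \cref{obs:presentable-d-fact} is the ($d$-connected, $d$-truncated) factorization system \emph{inside} a presentable $\infty$-category, whereas producing $A \in \Alg_{\cQ}(\cD)$ with $e_*A \simeq \overline{A}$ and $f^*A \simeq B$ is an \emph{operadic} extension problem: a point of such a lifting space is a map of $\infty$-operads $\cQ \to \cD$ fitting into a square in $\Op$ whose left leg is $f \colon \cP \to \cQ$ and whose right leg is $e \colon \Alg_{\cR}(\cC) \to \Rk\Alg_{\cR}(\cC)$. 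The paper never ``promotes'' the multi-homwise $d_1$-truncatedness of $e$ to a property of $e_*$ on algebra categories --- no such promotion is needed. It solves the lifting problem directly in $\Op$: $f$ is surjective on colors (automatic for reduced operads, \cref{ex:reduced-surjective-on-colors}) and multi-homwise $d_1$-connected, $e$ is multi-homwise $d_1$-truncated by \cref{prop:AlgO-RkAlgO-truncated}, so the space of lifts is contractible by the factorization system on $\Op$ (\cref{obs:op-fact-sys}) --- note this is the factorization system \cref{warn:2-diff-fact} warns is \emph{not} the truncated/connected one on morphisms of $\Catinf$ or $\Op$. Then \cref{lem:adjunction-section-lem}, applied to $(-)^k \dashv \Rk$ using the hypothesis that $f^k$ is an equivalence, gives a section from this lifting space to the space of extensions of a given $\cP$-algebra map along $f$, so the latter is contractible as well. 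Working with mapping spaces throughout (as \cite[Proposition 3.2.6]{EHA} permits) also dissolves your fully-faithfulness/essential-surjectivity bookkeeping, which at the level of algebra categories would additionally require functoriality of all your lifts. So: right square, right numerics, but the operadic factorization system that actually closes the argument is missing from your proposal.
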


\begin{proof}
Let $D$ be $d_1 + d_2 + 2$. By \cite[Proposition 3.2.6]{EHA}, it suffices to show that for any $(D+1)$-topos $\cC$, the map of spaces 
\begin{equation}
    \Hom_{\Op}(\cQ, \Alg_{\cR}(\cC)) \simeq \Hom_{\Op}(\cQ \otimes \cR, \cC) \to 
    \Hom_{\Op}(\cP \otimes \cR, \cC) 
     \simeq 
     \Hom_{\Op}(\cP, \Alg_{\cR}(\cC))
\end{equation}
is an equivalence. Equivalently, it suffices to show that for any diagram of the form 
\begin{equation}
    \begin{tikzcd}
        \cQ \ar[r] \ar[d, "f"] & \Alg_{\cR}(\cC) \\ 
        \cP \ar[ur, dashed] & 
    \end{tikzcd}
\end{equation}
the space of lift is contractible. 
As a section of a contractible space is also contractible, 
by \cref{lem:adjunction-section-lem}, it suffices to show that the square  
\begin{equation}\label{eq:main-theorem-diag-2}
    \begin{tikzcd}
        \cQ \ar[r] \ar[d, "f"] & \Alg_{\cR}(\cC) \ar[d]\\ 
        \cP \ar[ur, dashed] \ar[r]& \Rk(\Alg_{\cR}(\cC))
    \end{tikzcd}
\end{equation}
has a contractible space of lifts.
Note that the left vertical map is multi-homwise $d_1$-connected by hypothesis and surjective on colors (\cref{ex:reduced-surjective-on-colors}), while the right vertical map is multi-homwise $(D - d_2 -2) = d_1$-truncated by \cref{prop:AlgO-RkAlgO-truncated}. As the classes of (surjective on colors and multi-homwise $d_1$-connected, multi-homwise $d_1$-truncated) morphisms are left and right classes of a factorization system on $\Op$ (\cref{obs:op-fact-sys}), it follows that the space of lift of \eqref{eq:main-theorem-diag-2} is contractible.
\end{proof}
By \cref{cor:cohernet-d-connected}, we get the following:
\begin{corollary}\label{cor:main-thm-coherent}
    Fix $d_1, d_2 \geq -2$ and $k \geq 1$.
    Let $f \colon \cP \to \cQ$ be a multi-homwise $d_1$-connected map between reduced $\infty$-operads such that the $k$-restriction $f^k \colon \cP^k \to \cQ^k$ is an equivalence. Let $\cR$ be a coherent multi-homwise $d_2$-connected $\infty$-operad; then the induced map 
    \begin{equation}
        \cP \otimes \cR \to \cQ \otimes \cR
    \end{equation}
    is a $(d_1 + k (d_2 + 2))$-equivalence.
\end{corollary}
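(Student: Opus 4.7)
The plan is to deduce this corollary by directly combining the two main results available just before its statement, namely \cref{cor:cohernet-d-connected} (a $k$-wise connectivity bound for coherent operads) and \cref{thm:main-thm} (the generalized Eckmann-Hilton argument phrased in terms of $k$-wise connectivity). No new ideas should be required; the only thing to verify is that the numerical bounds line up correctly.

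First, I would invoke \cref{cor:cohernet-d-connected} applied to the coherent multi-homwise $d_2$-connected $\infty$-operad $\cR$. This upgrades the multi-homwise connectivity hypothesis to a $k$-wise connectivity statement: namely, $\cR$ is $k$-wise $(k(d_2 + 2) - 2)$-connected. Note that the coherence of $\cR$ is precisely what allows the jump from multi-homwise to $k$-wise connectivity; without coherence, \cref{prop:op-BM} and hence \cref{cor:op-BM-k} would not apply.

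Second, I would feed this into \cref{thm:main-thm} with the parameters $d_1$ kept as-is and $d_2$ replaced by $d_2' := k(d_2 + 2) - 2$. The hypotheses of \cref{thm:main-thm} are all met: $f \colon \cP \to \cQ$ is multi-homwise $d_1$-connected between reduced $\infty$-operads with $f^k$ an equivalence by assumption, and $\cR$ is reduced (it is coherent, hence in particular reduced by \cite[Definition 3.1.1.9]{HA}) and $k$-wise $d_2'$-connected by the previous step. The conclusion is that $\cP \otimes \cR \to \cQ \otimes \cR$ is a $(d_1 + d_2' + 2)$-equivalence.

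Finally, I would simplify the numerics:
\begin{equation*}
d_1 + d_2' + 2 = d_1 + \bigl(k(d_2 + 2) - 2\bigr) + 2 = d_1 + k(d_2 + 2),
\end{equation*}
which is exactly the claimed bound. There is no substantial obstacle here; the content of the corollary is entirely encapsulated in \cref{thm:main-thm} once the coherence-to-$k$-wise translation of \cref{cor:cohernet-d-connected} has been performed. The only place one might pause is to double-check that the reducedness hypothesis in \cref{thm:main-thm} is indeed included in coherence, but this is part of the definition.
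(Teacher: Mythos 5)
Your proposal is exactly the paper's argument: the paper derives this corollary in one step by combining \cref{cor:cohernet-d-connected} (coherent multi-homwise $d_2$-connected implies $k$-wise $(k(d_2+2)-2)$-connected) with \cref{thm:main-thm}, and your arithmetic $d_1 + (k(d_2+2)-2) + 2 = d_1 + k(d_2+2)$ is correct. One tiny caveat: coherence in \cite[Definition 3.1.1.9]{HA} only requires the underlying $\infty$-category to be a Kan complex, not contractible, so ``coherent hence reduced'' is not literally part of the definition --- but the paper makes the same implicit assumption, so this does not distinguish your proof from its.
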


\begin{remark}
    Note that any map of reduced $\infty$-operads induces an equivalence on $1$-restrictions, in which case the connectivity bound agrees with \cite[Theorem 5.3.1]{EHA}.
    Note that we asked for $\cP \to \cQ$ to be $d$-connective rather than just a $d$-equivalence. This is because we are lifting against $\cC \to \Rk \cC$ rather than lifting against certain endomorphism $\infty$-operads over $\EE_\infty$.
\end{remark}

\section{$\EE_n$ algebras in $m$-categories}\label{sec:En-in-m-cat}
In this section we apply \cref{thm:main-thm} to the problem of approximating $\AA_{k_1} \otimes \AA_{k_2} \otimes \cdots \otimes \AA_{k_n} \to \EE_n$. 
First we apply \cref{thm:main-thm} to the morphism $\AA_k \to \AA_j$. Recall that the map $\AA_k \to \AA_j$ is multi-homwise $(k-3)$-connected (\cref{ex:connected-level}) and induces an isomorphism $(\AA_k)^k \simeq (\AA_j)^k$:
\begin{corollary}\label{cor:Ak-Aj-otimes-R-equiv}
    Fix $1 \leq k < j \leq \infty$ and $d \geq -2$. Let $\cR$ be a reduced $k$-wise $d$-connected $\infty$-operad. Then the map of $\infty$-operads
    \begin{equation}
        \AA_k \otimes \cR \to \AA_j \otimes \cR
    \end{equation}
    is a $(k + d -1)$-equivalence. 
\end{corollary}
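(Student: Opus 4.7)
The plan is to apply \cref{thm:main-thm} directly with $\cP = \AA_k$, $\cQ = \AA_j$, and the given $\cR$. There are essentially three hypotheses to verify: (i) that $\AA_k$ and $\AA_j$ are reduced; (ii) that the map $\AA_k \to \AA_j$ is multi-homwise $d_1$-connected for a suitable $d_1$; and (iii) that the induced map on $k$-restrictions $(\AA_k)^k \to (\AA_j)^k$ is an equivalence. The first is immediate since both $\AA_\bullet$ are unital with contractible underlying $\infty$-category. For the second, \cref{ex:connected-level} tells us that $\AA_k \to \AA_j$ is multi-homwise $(k-3)$-connected, so we set $d_1 = k-3$.

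For the third hypothesis, recall that $\AA_k = \Lk((\EE_1)^k)$, and since $\Lk \colon \Opunleqk \hookrightarrow \Opun$ is fully faithful, applying $(-)^k$ yields $(\AA_k)^k \simeq (\EE_1)^k$. The same reasoning applied to $\AA_j = \mathrm{L}_j((\EE_1)^j)$, together with the observation that $(-)^k$ factors through $(-)^j$ for $k \leq j$, gives $(\AA_j)^k \simeq (\EE_1)^k$. The natural map $(\AA_k)^k \to (\AA_j)^k$ is thus an equivalence; this is precisely the compatibility recorded in the sentence preceding the statement of the corollary.

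With $d_1 = k-3$ and $d_2 = d$ (the $k$-wise connectivity of $\cR$), \cref{thm:main-thm} produces a $(d_1 + d_2 + 2) = (k + d - 1)$-equivalence, as claimed. The argument is essentially bookkeeping: the heavy lifting has already been carried out in \cref{thm:main-thm}, and there is no real obstacle beyond carefully matching indices and recalling the definitional properties of the $\AA_\bullet$-filtration.
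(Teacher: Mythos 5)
Your proposal is correct and matches the paper's intended argument exactly: the paper states \cref{cor:Ak-Aj-otimes-R-equiv} as an immediate consequence of \cref{thm:main-thm} applied with $d_1 = k-3$ (from \cref{ex:connected-level}) and $d_2 = d$, using precisely the fact that $(\AA_k)^k \to (\AA_j)^k$ is an equivalence. Your verification of the $k$-restriction condition via $\AA_k = \Lk((\EE_1)^k)$ and full faithfulness of $\Lk$ is the correct justification of the compatibility the paper records without proof.
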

We are ready to now state our main theorem: 
\begin{theorem}\label{thm:AAk-equivalence}
    Fix $1 \leq k_1, k_2, \cdots, k_n \leq \infty$. Let $k_{\min}$ be $\min(k_1, k_2, \cdots, k_n)$ and $r$ be the number of occurrences of $k_{\min}$ among $k_1, \cdots, k_n$. The following holds:
    \begin{enumerate}
        \item For any $1 \leq i \leq n$ and $j_i$ such that $k_i < j_i \leq \infty$, the map 
        \begin{equation}
            \AA_{k_1}\otimes  \cdots \AA_{k_{i-1}} \otimes \AA_{k_{i}} \otimes \AA_{k_{i+1}} \cdots  \otimes \AA_{k_n} 
            \to 
            \AA_{k_1}  \otimes \cdots \AA_{k_{i-1}} \otimes \AA_{j_i} \otimes \AA_{k_{i+1}} \cdots \otimes \AA_{k_n} 
        \end{equation}
        is a \begin{equation}\label{eq:second-last}
            \begin{cases}
                k_i + (n-1)k_{\min} - 2 - r, & k_i > k_{\min} \\ 
                nk_{\min} -2 - r, & k_i = k_{\min}
            \end{cases}
        \end{equation}
        -equivalence. 
        \item The map of $\infty$-operads
        \begin{equation}
            \AA_{k_1} \otimes \AA_{k_2} \otimes \cdots \otimes \AA_{k_n} \to \EE_n
        \end{equation}
        is a $(nk_{\min} - 2 - r)$-equivalence.
        \item For $k \geq 1$, the $\infty$-operad $\AA_{k_1} \otimes \AA_{k_2} \otimes \cdots \otimes \AA_{k_n}$ is $k$-wise $\min(nk_{\min} - 2 - r, nk-2)$-connected.
    \end{enumerate}
\end{theorem}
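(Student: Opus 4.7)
The plan is to prove parts (1), (2), and (3) simultaneously by induction on $n$, arranged so that within each inductive step, part (3) for fewer factors feeds into part (1), which yields part (2) by composition, which in turn yields part (3) for $n$ factors via \cref{prop:k-wise-d-conn-and-d-eq}. The base case $n = 1$ is a repackaging of earlier results: part (1) amounts to $\AA_{k_1} \to \AA_{j_1}$ being a $(k_1 - 3)$-equivalence, which combines \cref{ex:connected-level} with \cref{obs:d-equivalence-d-cat} and matches $nk_{\min} - 2 - r = k_1 - 3$; part (2) is the specialization $j_1 = \infty$; and part (3) is exactly \cref{cor:AAn-k-wise-d-conn}.

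For the inductive step, fix $n \geq 2$ and assume all three parts for $n - 1$ factors. To prove part (1) for $n$, I view the displayed map as $(\AA_{k_i} \to \AA_{j_i}) \otimes \cR$ with $\cR := \bigotimes_{l \neq i} \AA_{k_l}$. The map $\AA_{k_i} \to \AA_{j_i}$ is multi-homwise $(k_i - 3)$-connected and its $k_i$-restriction is an equivalence (both sides restrict to $\EE_1$ on arities $\leq k_i$). The inductive part (3) applied to the $(n-1)$-fold tensor $\cR$ at arity $k_i$ yields a $k_i$-wise connectivity bound, and \cref{cor:Ak-Aj-otimes-R-equiv} turns this into a $(k_i + d - 1)$-equivalence. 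A case split on whether $k_i > k_{\min}$, $k_i = k_{\min}$ with $r > 1$, or $k_i = k_{\min}$ with $r = 1$ matches the two branches of the stated formula. Part (2) for $n$ then follows by factoring $\AA_{k_1} \otimes \cdots \otimes \AA_{k_n} \to \EE_n \simeq \AA_\infty^{\otimes n}$ as a chain of $n$ steps, each replacing one $\AA_{k_l}$ by $\AA_\infty$; each step is an instance of (1), and the minimum equivalence level across the chain gives $nk_{\min} - 2 - r$, the bottleneck occurring at a step that replaces a $k_{\min}$-factor. Part (3) then comes from \cref{prop:k-wise-d-conn-and-d-eq} applied to this composite, using that $\EE_n$ is $k$-wise $(nk - 2)$-connected by \cref{ex:En-k-wise-connected}.

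The main obstacle will be the arithmetic bookkeeping in part (1), where the three subcases correspond to three distinct behaviors after removing the $i$-th factor: the pair $(k_{\min}, r)$ may be unchanged (when $k_i > k_{\min}$), $r$ may decrement (when $k_i = k_{\min}$ and $r > 1$), or $k_{\min}$ may jump up to the next distinct value (when $k_i = k_{\min}$ and $r = 1$). Each case feeds a different $d$ into \cref{cor:Ak-Aj-otimes-R-equiv}, and verifying that the resulting bound $k_i + d - 1$ agrees with, or dominates, the piecewise formula in the statement is the delicate step; the $r = 1$ case in particular relies on the raised $k_{\min}$ value for the remaining $n - 1$ factors beating the crude formula. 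The remainder of the argument, namely the composition in (2) and the application of \cref{prop:k-wise-d-conn-and-d-eq} in (3), is essentially formal once part (1) is in hand.
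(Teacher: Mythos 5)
Your proposal is correct and follows essentially the same route as the paper: a simultaneous induction on $n$ in which part (3) for $n-1$ factors feeds \cref{cor:Ak-Aj-otimes-R-equiv} applied to $(\AA_{k_i} \to \AA_{j_i}) \otimes \cR$ with $\cR = \bigotimes_{l \neq i} \AA_{k_l}$, followed by the same three-way case analysis on how $(k_{\min}, r)$ changes after removing the $i$-th factor, with (2) obtained by iterating (1) with $j_i = \infty$ and (3) from \cref{prop:k-wise-d-conn-and-d-eq} together with \cref{ex:En-k-wise-connected}. The only cosmetic difference is that the paper reduces to $i = 1$ without loss of generality and phrases the induction as proving (1) first, but the logical dependency structure is identical to yours.
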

\begin{proof}
    Part (1) implies part (2) by iteratively taking $j_i$ to be $\infty$, and part (2) implies part (3) by  \cref{prop:k-wise-d-conn-and-d-eq} and \cref{ex:En-k-wise-connected}.

    It remains to prove part (1).
    The base case of $n = 1$ is covered in \cref{ex:connected-level}. Now we induct on $n$. Without loss of generality, we can assume that $i = 1$.
    By induction, the $\infty$-operad
    \begin{equation}
        \AA_{k_2} \otimes \AA_{k_3} \otimes \cdots \otimes \AA_{k_n}
    \end{equation}
    is $k$-wise $\min((n-1)k'_{\min} - 2 - r', (n-1)k-2)$-connected. Here $k'_{\min}$ is  $\min(k_2, \cdots, k_n)$ and $r'$ is the number of  occurrences of $k'_{\min}$ among $k_2, \cdots, k_n$.
    By \cref{cor:Ak-Aj-otimes-R-equiv}, the map 
    \begin{equation}\label{eq:last-helper-map}
        \AA_{k_1} \otimes (\AA_{k_2} \otimes \cdots \otimes \AA_{k_n}) \to \AA_{j_1} \otimes (\AA_{k_2} \otimes \cdots \otimes \AA_{k_n})
    \end{equation}
    is a \begin{equation}\label{eq:last-helper}
        k_1 + \min((n-1)k'_{\min} - 2 - r', (n-1)k_1-2) -1
    \end{equation}-equivalence. It suffices to check that \eqref{eq:last-helper} is equal to \eqref{eq:second-last}.
    
    If $k = k_{\min}$ and $k_{\min} < k'_{\min}$, then $r = 1$ and \eqref{eq:last-helper} is a $(nk_{\min} - 2 - r)$-equivalence. 
    Similarly, if $k = k_{\min} = k'_{\min}$, then $r = r'+1$ and \eqref{eq:last-helper} is a $(nk_{\min} - 2 - r)$-equivalence. Lastly, if $k > k_{\min}$, then $k_{\min} = k'_{\min}$, $r = r'$, and \eqref{eq:last-helper} is a $(k_1 +(n-1)k_{\min} - 2 - r)$-equivalence.
\end{proof}
Combine \cref{thm:AAk-equivalence}(2) and \cref{obs:d-equivalence-d-cat}, we get the following:
\begin{corollary}[\cref{cor:intro-main-cor}]\label{cor:En-alg-in-m-cat}
    Fix $1 \leq n \leq \infty$ and $m \geq -1$. Let $\cC$ be a symmetric monoidal $m$-category. The restriction functor 
    \begin{equation}
        \Alg_{\EE_n}(\cC) \to \Alg_{\AA_{k+1}^{\otimes(n-s)} \otimes \AA_{k}^{\otimes s}}(\cC)
    \end{equation}
    is an equivalence of $\infty$-categories, where $k = \lceil \frac{m+1}{n} \rceil + 1$ and $s$ is  $(nk - m-1)$.
\end{corollary}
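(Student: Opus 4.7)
The plan is straightforward: apply \cref{thm:AAk-equivalence}(2) to the tuple $(k_1,\ldots,k_n)$ in which the first $n-s$ entries are $k+1$ and the remaining $s$ entries are $k$, and then invoke \cref{obs:d-equivalence-d-cat}. First I would read off the relevant statistics for this specific tuple. In the generic case $s \geq 1$, the minimum is $k_{\min}=k$ and the number of occurrences of the minimum is $r=s$; in the edge case $s=0$, all factors equal $k+1$, so $k_{\min}=k+1$ and $r=n$.

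Second, I would apply \cref{thm:AAk-equivalence}(2), which yields that
\begin{equation}
    \AA_{k+1}^{\otimes(n-s)} \otimes \AA_k^{\otimes s} \to \EE_n
\end{equation}
is a $(nk_{\min}-2-r)$-equivalence. For $s \geq 1$, the bound is $nk-2-s$, and substituting the defining relation $s=nk-m-1$ yields exactly $m-1$. For $s=0$, the bound is $n(k+1)-2-n = nk-2$, and the same arithmetic relation forces $nk = m+1$, so again the bound equals $m-1$. Either way, the map of $\infty$-operads is at least an $(m-1)$-equivalence.

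Third, since $\cC$ is a symmetric monoidal $m$-category, \cref{obs:d-equivalence-d-cat} applied with $d=m-1$ promotes this $(m-1)$-equivalence of $\infty$-operads to an equivalence on $\infty$-categories of algebras in $\cC$, giving
\begin{equation}
    \Alg_{\EE_n}(\cC) \isoto \Alg_{\AA_{k+1}^{\otimes(n-s)} \otimes \AA_k^{\otimes s}}(\cC).
\end{equation}
I anticipate no genuine obstacle: the essential content has already been absorbed by \cref{thm:AAk-equivalence}, and the remaining work is the two-line combinatorial bookkeeping verifying that the connectivity bound $nk_{\min}-2-r$ produced by the main theorem matches (or exceeds) the threshold $m-1$ required by \cref{obs:d-equivalence-d-cat} for the prescribed choice of $(k,s)$.
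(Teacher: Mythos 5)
Your proposal is correct and is exactly the paper's proof: the paper derives \cref{cor:En-alg-in-m-cat} in one line by combining \cref{thm:AAk-equivalence}(2) with \cref{obs:d-equivalence-d-cat}, and your two-case bookkeeping (checking that $nk_{\min}-2-r$ equals $m-1$ whether $s\geq 1$ or $s=0$) just makes explicit the arithmetic the paper leaves to the reader. Nothing is missing.
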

\begin{example}
    Take $n = 1$. Then \cref{cor:En-alg-in-m-cat} recovers the result that an $\EE_1$-algebra in an $m$-category is an $\AA_{m+2}$-algebra.
\end{example}

We end by going through the implications of \cref{cor:En-alg-in-m-cat} for small $m$:
\begin{example}[$\EE_n$-algebras in $(-2)$-categories]
   The only $(-2)$-category is the terminal category $\pt$. Note that $\Alg_{\cO}(\pt) \simeq \pt$ for any $\infty$-operad $\cO$. 
\end{example}
    
\begin{example}[$\EE_n$-algebras in $(-1)$-categories]
   There are only two $(-1)$-categories, namely $\varnothing$ and $\pt$. In this case an $\EE_n$-algebra structure on a $(-1)$-category $\cC$ is simply equivalent to an object $c \in \cC$, equivalently an $\AA_1$-algebra. Thus $\Alg_{\EE_n}(\cC) \simeq \Alg_{\AA_1}(\cC)$. This agrees with \cref{cor:En-alg-in-m-cat}.
\end{example}

\begin{example}[$\EE_n$-algebras in $0$-categories]
    A $0$-category is a poset, and a symmetric monoidal $0$-category is a poset $\cC$ together with a unit $1_{\cC}$ and a unital symmetric product $\times \colon \cC \times \cC \to \cC$ that preserves the poset structure.
    For any object $c \in \cC$ and $n \geq 1$, there exists an $\EE_n$-algebra structure on $c$ if and only if $c \times c \leq c$ and $1_{\cC} \leq c$. Furthermore, such a structure is unique if it exists. It follows that an $\EE_n$-algebra structure on $c$ is equivalent to an $\AA_2$-algebra structure on $c$, as predicted by \cref{cor:En-alg-in-m-cat}.
\end{example}
\begin{example}[$\EE_n$-algebras in $1$-categories]\label{ex:1-cat}
    An $1$-category is an ordinary category.
    For an object in a symmetric monoidal $1$-category, an 
    $\EE_1$-algebra structure is equivalent to  an $\AA_3$-algebras structure, which unpacks the classical notion of associative algebra structure.
    
    The $n \geq 2$ cases are more interesting. Let $c$ be an object in a symmetric monoidal $1$-category $\cC$.
    The classical Eckmann-Hilton argument \cite{EH} states that any pairs of unital binary operations on $c$ that distributes over each other are equal and commutative.
    This 
    implies that for any $2 \leq n \leq \infty$, an $\EE_n$-algebra structure on $c$ is equivalent to an $\AA_2 \otimes \AA_2$-algebra struture on $c$, agreeing with \cref{cor:En-alg-in-m-cat}.
\end{example}
\begin{example}[$\EE_n$-algebras in $2$-categories]\label{ex:2-cat}
    Let us consider the symmetric monoidal $2$-category $\Cat$ of $1$-categories.
    % Let $c$ be a category, which we view as an object in $\Cat$.
    An $\EE_1$-algebra in $\Cat$ is a monoidal category, which is a category with a unital multiplication and an associator that satisfies the pentagon axiom. 
    Such structure is precisely an $\AA_4$-algebra structure. 

    More interestingly, it is a well-known folklore  that 
    an $\EE_2$-algebra in $\Cat$ is a braided monoidal category. It turns out this equivalence actually factors through $\AA_2 \otimes \AA_3$:
    the braided monoidal categories are naturally $\AA_2 \otimes \AA_3$-algebras (see \cite[Example 5.1.2.4]{HA}), and \cref{cor:En-alg-in-m-cat} implies that $\AA_2 \otimes \AA_3$-algebras in  symmetric monoidal $2$-categories are equivalent to $\EE_2$-algebras.\footnote{A similar factoring is done in \cite{braiding}. In \cite[Corollary 7.4.15(2)]{braiding} it is shown that  braided monoidal categories are $\AA_2 \otimes \EE_1$-algebras in $\Cat$.  Furthermore, in \cite[Corollary 7.7.8]{braiding} it is shown that 
    $\AA_2 \otimes \EE_1$-algebra in symmetric monoidal $2$-categories are equivalent to $\EE_2$-algebras. Note that we have shown that $\AA_2 \otimes \EE_1$, $\AA_2 \otimes \EE_1$, and $\EE_2$ are all $1$-equivalent to each other.}

    For $n \geq 3$, $\EE_n$-algebras in $\Cat$ are equivalent to $\EE_\infty$-algebras, which are  symmetric monoidal categories. This is reflected in \cref{cor:AAn-k-wise-d-conn} by the statement that $\EE_n$-algebras in symmetric monoidal $2$-categories for $n \geq 3$ are equivalent to $\AA_2^{\otimes 3}$-algebras.
\end{example}

\begin{remark}\label{rem:three-cat-case}
    Let $\Cat_{2}$ be the $3$-category of $2$-categories.
    There has been a long history for finding the right definition of a braided monoidal $2$-category. We refer the reader to \cite{schommer-pries-thesis} for the definition of a braided monoidal $2$-category, as well as the history of finding the right definition.
    It would be interesting to check that a braided monoidal $2$-category (viewed as an object in $\Cat_{2}$) is equivalent to an $\AA_3 \otimes \AA_3$-algebra in $\Cat_{2}$, which by \cref{cor:AAn-k-wise-d-conn} would imply that it is also equivalent to an $\EE_2$-algebra in $\Cat_{2}$.
    
    Similarly, it would be interesting to check that a sylleptic monoidal $2$-category is equivalent to an $\AA_3 \otimes \AA_2 \otimes \AA_2$-algebra  in $\Cat_2$, which is equivalent to an $\EE_3$-algebra in $\Cat_2$ by \cref{cor:AAn-k-wise-d-conn}.
\end{remark}

\bibliographystyle{amsalpha}
\bibliography{bib.bib}

\end{document}